\DeclareMathAlphabet{\mathpzc}{OT1}{pzc}{m}{it}
\newcommand{\slope}{\mathfrak{s}}
\newtheorem{remark}[theorem]{Remark}
\newcommand{\eremk}{\hbox{}\hfill\rule{0.8ex}{0.8ex}}
\numberwithin{equation}{section}
\DeclareMathOperator{\dist}{dist}
\newcommand{\calA}{{\mathcal A}}
\newcommand{\calC}{{\mathcal C}}
\newcommand{\calG}{{\mathcal G}}
\newcommand{\calL}{{\mathcal L}}
\newcommand{\calM}{{\mathcal M}}
\newcommand{\calN}{{\mathcal N}}
\newcommand{\calO}{{\mathcal O}}
\newcommand{\calT}{{\mathcal T}}
\newcommand{\bA}{{\boldsymbol A}}
\newcommand{\bmr}{{\boldsymbol r}}
\newcommand{\bmc}{{\boldsymbol c}}
\newcommand{\bbN}{{\mathbb N}}
\newcommand{\bbR}{{\mathbb R}}
\title{
Tensor FEM for spectral fractional diffusion\thanks{The results in this paper were obtained when the authors met at the MFO Oberwolfach 
        during the WS 1711 in March 2017. 
The research of RHN was supported in part by NSF grants DMS-1109325 and DMS-1411808. 
The research of EO was supported in part by CONICYT through project FONDECYT 3160201. 
The research of AJS was supported in part by NSF grant DMS-1418784.
The research of JMM was supported by the Austrian Science Fund (FWF) project F 65. 
}}
\author{Lehel Banjai\thanks{Maxwell Institute for Mathematical Sciences, School of Mathematical  \& Computer Sciences, Heriot-Watt University, Edinburgh EH14 4AS, UK  (\texttt{l.banjai@hw.ac.uk}).}
\and 
Jens M. Melenk\thanks{Institut  f\"{u}r  Analysis  und  Scientific  Computing,  Technische  Universit\"{a}t  Wien,  A-1040  Vienna, Austria (\texttt{melenk@tuwien.ac.at}).}
\and
Ricardo H.~Nochetto\thanks{Department of Mathematics and Institute for Physical
Science and Technology, University of Maryland, College Park, MD 20742, USA (\texttt{rhn@math.umd.edu}).}
\and
Enrique Ot\'arola\thanks{Departamento de Matem\'atica, Universidad T\'ecnica Federico Santa Mar\'ia, Valpara\'iso, Chile (\texttt{enrique.otarola@usm.cl}).}
\and
Abner J.~Salgado\thanks{Department of Mathematics, University of Tennessee, Knoxville, TN 37996, USA (\texttt{asalgad1@utk.edu}).}
\and
Christoph Schwab\thanks{Seminar for Applied Mathematics, ETH Z\"{u}rich, ETH Zentrum, HG  G57.1, CH8092 Z\"{u}rich, Switzerland (\texttt{christoph.schwab@sam.math.ethz.ch}).}}
\date{Draft version of \today.}
\begin{document}

\maketitle
\begin{abstract}
We design and analyze several Finite Element Methods (FEMs) 
applied to the Caffarelli-Silvestre extension  
that localizes the fractional powers of symmetric, coercive, 
linear elliptic operators in bounded domains with Dirichlet boundary conditions. 
We consider open, bounded, polytopal but not necessarily convex 
domains $\Omega \subset \bbR^d$ with $d=1,2$.
For the solution to the extension problem, we establish 
analytic regularity with respect to the extended variable $y\in (0,\infty)$.
We prove that the solution belongs to countably normed, 
power--exponentially weighted 
Bochner spaces of analytic functions with respect to $y$,
taking values in corner-weighted Kondat'ev type Sobolev spaces in $\Omega$.
In $\Omega\subset \bbR^2$,
we discretize with continuous, piecewise linear, Lagrangian FEM ($P_1$-FEM) 
with mesh refinement near corners, and prove that 
first order convergence rate is attained for compatible data $f\in \Ws$.

We also prove that tensorization of a $P_1$-FEM in $\Omega$ 
with a suitable $hp$-FEM in the extended variable achieves 
log-linear complexity with respect to $\calN_\Omega$, the number 
of degrees of freedom in the domain $\Omega$.
In addition, we propose a novel, \emph{sparse tensor product FEM} 
based on a multilevel $P_1$-FEM in $\Omega$ and 
on a $P_1$-FEM on radical--geometric meshes in the extended variable.
We prove that this approach also achieves 
log-linear complexity with respect to $\calN_\Omega$.
Finally, under the stronger assumption 
that the data is analytic in $\overline{\Omega}$, 
and without compatibility at $\partial \Omega$,
we establish \emph{exponential rates of convergence of $hp$-FEM
for spectral, fractional diffusion operators} in energy norm.
This is achieved by a combined tensor product $hp$-FEM 
for  the Caffarelli-Silvestre extension 
in the truncated cylinder $\Omega \times (0,\Y)$
with anisotropic geometric meshes that are refined towards $\partial\Omega$.
We also report numerical experiments for model problems
which confirm the theoretical results.
We indicate several extensions and generalizations of the
proposed methods to other problem classes and 
to other boundary conditions on $\partial \Omega$.
\end{abstract}

\begin{keywords}
Fractional diffusion, nonlocal operators, weighted Sobolev spaces, regularity estimates, 
finite elements, anisotropic $hp$--refinement, corner refinement, sparse grids, exponential convergence.
\end{keywords}

\begin{AMS}
26A33,   
65N12,   
65N30.   
\end{AMS}

\section{Introduction}
\label{sec:introduccion}
We are interested in the design and analysis of a variety of efficient numerical techniques 
to solve problems involving certain fractional powers of 
the linear, elliptic, self-adjoint, second order, differential operator 
$
\calL w = - \DIV( A \GRAD w ) + c w
$,
supplemented with homogeneous Dirichlet boundary conditions. 
The coefficient $A\in L^\infty(\Omega,\GL(\R^d))$ 
is symmetric and uniformly positive definite and 
$0 \leq c \in L^\infty(\Omega,\R)$
(additional regularity requirements will be imposed in the course
 of our convergence rate analysis ahead).
We denote by $\Omega$ a bounded domain of $\R^d$ ($d=1,2$), 
with Lipschitz boundary $\partial\Omega$ and 
further properties imposed as required:
the FEM convergence theory in Section~\ref{S:FEMy} will focus 
on polygonal domains $\Omega \subset {\mathbb R}^2$, 
the $hp$-FEM results in Section~\ref{sec:hpx} require 
analytic $\partial\Omega$.

The Dirichlet problem for the fractional Laplacian is as follows: 
Given a function $f$ and $s \in (0,1)$, we seek $u$ such that
\begin{equation}
\label{fl=f_bdddom}
    \calL^s u = f \quad \text{in } \Omega\;. 
\end{equation}

An essential difficulty in the analysis of \eqref{fl=f_bdddom} 
and in the design of efficient numerical methods for this problem
is that 
 $\calL^s$ is a nonlocal operator
\cite{CS:11,CT:10,CS:07,CDDS:11,Landkof}. 
In the case of the Dirichlet Laplacian $\calL=-\Delta$, 
Caffarelli and Silvestre in \cite{CS:07} localize 
it
by using a nonuniformly elliptic PDE posed in one more spatial dimension.
They showed that any power $s \in (0,1)$ of
the fractional Laplacian in $\R^d$ can be realized as the Dirichlet-to-Neumann map 
of an extension to the upper half-space $\R^{d+1}_+$. 
This result was extended by Cabr\'e and Tan \cite{CT:10} and 
by Stinga and Torrea \cite{ST:10} 
to bounded domains $\Omega$ and more general operators, 
thereby obtaining an extension posed on the
semi--infinite cylinder $\C := \Omega \times (0,\infty)$; 
we also refer to \cite{CDDS:11}.
This extension is the following local boundary value problem
\begin{equation}
\label{alpha_harm_intro}
  \begin{dcases}
    \mathfrak{L} \ue = -\DIV\left( y^\alpha \bA \nabla \ue \right) + c y^\alpha \ue = 0  & \text{ in } \C, 
    \\
    \ue = 0  &\text{ on } \partial_L \C, \\
    \partial_{\nu^\alpha} \ue = d_s f  &\text{ on } \Omega \times \{0\},
  \end{dcases}
\end{equation}
where $\bA = \diag\{A,1\} \in L^\infty(\bar\C,\GL(\bbR^{d+1}))$,
$\partial_L \C := \partial \Omega \times (0,\infty)$ 
signifies the lateral boundary of $\C$, 
$d_s: = 2^{1-2s} \Gamma(1-s)/\Gamma(s)$ is a positive normalization constant 
and the parameter $\alpha$ is defined as $\alpha = 1-2s \in (-1,1)$ \cite{CS:07,ST:10}. 
The so--called conormal exterior derivative of $\ue$ at $\Omega \times \{ 0 \}$ is
\begin{equation}
\label{def:lf}
\partial_{\nu^\alpha} \ue = -\lim_{y \rightarrow 0^+} y^\alpha \ue_y.
\end{equation}
We shall refer to $y$ as the \emph{extended variable} and to 
the dimension $d+1$ in $\R_+^{d+1}$ the \emph{extended dimension} of 
problem \eqref{alpha_harm_intro}. Throughout the text, points $x \in \C$ 
will be written as $x = (x',y)$ with $x' \in \Omega$ and $y > 0$.
The limit in \eqref{def:lf} must be understood in the distributional sense \cite{CT:10,CS:07,ST:10}. 
With the extension $\ue$ at hand, 
the fractional powers of $\calL$ in \eqref{fl=f_bdddom} and 
the Dirichlet-to-Neumann operator of problem \eqref{alpha_harm_intro} 
are related by
\begin{equation} \label{eq:identity}
  d_s \calL^s u = \partial_{\nu^\alpha} \ue \quad \text{in } \Omega\;.
\end{equation}

In \cite{NOS} the extension problem \eqref{alpha_harm_intro} was first 
used as a way to obtain a numerical technique to approximate the solution to \eqref{fl=f_bdddom}. 
A piecewise linear finite element method ($P_1$-FEM) was proposed and analyzed.
In this work, we extend the results of \cite{NOS} in several directions:
\begin{enumerate}[a)]
\item 
In Theorem~\ref{thm:P1Graded}, 
we generalize the error analysis of \cite{NOS}, 
based on the localization of $\calL^s$ given by \eqref{alpha_harm_intro}, 
to nonconvex polygonal domains $\Omega \subset {\mathbb R}^2$, 
under the requirement of Lipschitz regularity in $\Omega$ for $A$ and $c$,
and for $f\in \Ws$ in \eqref{def:Hs} ahead.
  
\item 
In Theorem~\ref{thm:glob_reg_ue} we prove, 
again under Lipschitz regularity in $\Omega$ for $A$ and $c$,
\emph{weighted $H^2$ (with respect to the extended variable $y$) regularity estimates} for the solution 
$\ue$ of 
\eqref{alpha_harm_intro}.  
We use these to propose a novel, sparse tensor product $P_1$-FEM in $\C$
which is realized by invoking (in parallel) $\calO(\log \calN_\Omega)$
many instances of anisotropic tensor product $P_1$-FEM in $\C$.
We prove, in Theorem~\ref{thm:sparse}, that, 
when the base of the cylinder $\C$ is a 
polygonal domain $\Omega \subset {\mathbb R}^2$, 
this approach yields a method with 
$\calO(\calN_\Omega\log \calN_\Omega)$ degrees of freedom 
realizing the (optimal) asymptotic convergence rate of 
$\calN_\Omega^{-1/2}$.
  
\item 
We show, in Theorem~\ref{thm:hpy-gradedx}, 
that a full tensor product approach of an $hp$-FEM in the extended variable $y$ 
with $P_1$-FEM in $\Omega$ yields the same rate.
To achieve this,
we establish \emph{weighted analytic regularity of $\ue$
with respect to the extended variable $y$}, in terms of countably normed
weighted Bochner-Sobolev spaces.
This extends, in the case $d=2$, recent
work \cite{MPSV17} to a general diffusion operator $\calL$ in \eqref{fl=f_bdddom} 
and to nonconvex, polygonal domains, 
under the requirement of Lipschitz regularity in $\Omega$ for $A$ and $c$.

\item We propose in Section \ref{S:diagonalization-abstract-setting} 
a novel \emph{diagonalization technique} which decouples the 
degrees of freedom introduced by a Galerkin (semi-)discretization in the extended variable.
It reduces the $y$-semidiscrete Caffarelli-Stinga extension 
to the solution of independent, singularly perturbed
\emph{second order} reaction-diffusion equations in $\Omega$.
This decoupling allows us to establish exponential convergence 
for \emph{analytic data} $f$ \emph{without boundary compatibility} 
as discussed in the following item \ref{item:e}).
The diagonalization also permits to block-diagonalize the stiffness matrix 
of the fully discrete problem 
with corresponding
befits for the solver complexity of the linear system of equations.

\item 
\label{item:e}
We establish an \emph{exponential convergence} rate \eqref{eq:ExpN1/4} of 
a local $hp$-FEM 
for the fractional differential operator $\mathfrak L$ in \eqref{alpha_harm_intro}.
This requires, however, the data  $A$, $c$ and $f$ to be \emph{analytic} in $\bar{\Omega}$ and 
the boundary $\partial \Omega$ to be analytic as well.
For brevity of exposition, 
we detail the mathematical argument in intervals $\Omega \subset \mathbb{R}^1$ 
and in bounded domains $\Omega \subset \bbR^2$ with analytic boundary 
    $\partial\Omega$, and for constant coefficients $A$ and $c$,
and only outline the necessary extensions, with references, 
for polygons $\Omega \subset {\mathbb R}^2$; 
see Theorems~\ref{thm:hp-for-fractional},
\ref{thm:hp-for-fractional-2D} and Remark~\ref{rem:hp-for-fractional}.
  
\item 
We present numerical experiments in each of the previous cases 
which illustrate our results, and indicate their sharpness.

\item 
We indicate how the presently developed discretizations and error bounds
extend in several directions, in particular to three dimensional polyhedral
domains $\Omega$, to Neumann or mixed Dirichlet-Neumann boundary 
conditions on $\partial \Omega$, etc.
\end{enumerate}

To close the introduction, we comment on other 
numerical approaches to fractional PDEs.
In addition to \cite{NOS}, numerical schemes 
that deal with spectral fractional powers of elliptic operators
have been proposed  in \cite{MPSV17} and \cite{BP:13}. The very recent work \cite{MPSV17} 
adopts the same Galerkin framework as \cite{NOS} and the present article and, 
independently,
proposes to use high order discretizations in the extended variable to exploit analyticity. 
The starting point of \cite{BP:13} is the so-called Balakrishnan formula, 
a contour integral representation of the inverse $\calL^{-s}$.
Upon discretizing the integral by a suitable quadrature formula, 
the numerical scheme of \cite{BP:13} results in a collection of (decoupled) 
singularly perturbed reaction diffusion problems in $\Omega$.
This connects \cite{BP:13} with our approach in Section~\ref{sec:hpx}.
However, the decoupled reaction diffusion problems in $\Omega$
which arise in our approach result from
a Galerkin discretization in the extended variable.
For the integral definition of the fractional Laplacian in several dimensions
we mention, in particular, the analysis of \cite{AcostBorthSINUM2017,DeliaGunzburger}. 
We refer the reader to \cite{ThreeMethods} 
for a detailed account of all the approaches mentioned above.
%
\section{Notation and preliminaries}
\label{sec:notation}
We adopt the notation of \cite{NOS,Otarola}:
For $\Y > 0$ the truncated cylinder with base $\Omega$ and 
height $\Y$ is $\C_\Y = \Omega \times (0,\Y)$,
its lateral boundary is $\partial_L\C_\Y = \partial \Omega \times (0,\Y)$.
If $x\in \C$ we set $x=(x',y)$ with $x'\in \Omega$ and $y \in (0,\infty)$.
By $a \lesssim b$ we mean $a \leq Cb$, 
with a constant $C$ that neither depends 
on $a$, $b$ or the discretization parameters. The notation $a \sim b$ signifies
$a \lesssim b\lesssim a$. The value of $C$ might change at each occurrence. 
\subsection{Fractional powers of elliptic operators}
\label{sub:fracLaplace}
To define $\calL^s$, as in \cite{NOS},
we invoke spectral theory \cite{BS}. 
The operator $\calL$ induces an inner product 
$\blfa{\Omega}(\cdot,\cdot)$ on $H^1_0(\Omega)$
\begin{equation}
\label{eq:blfOmega} 
\blfa{\Omega}(w,v) = \int_\Omega \left( A \nabla w \cdot \nabla v + c w v \right) \diff x', 
\end{equation} 
and $\calL$ is an isomorphism $H^1_0(\Omega) \rightarrow H^{-1}(\Omega)$ 
given by $u \mapsto \blfa{\Omega}(u,\cdot)$. 
The eigenvalue problem: 
Find $(\lambda,\phi) \in {\mathbb R} \times H^1_0(\Omega) \setminus \{0\}$ such that 
$$
 \blfa{\Omega}(\phi,v) = \lambda ( \phi, v)_{L^2(\Omega)}
 \quad \forall v \in H^1_0(\Omega)
$$
has a countable collection of solutions 
$\{\lambda_k, \varphi_k \}_{k \in \mathbb{N}} \subset \R^+ \times H_0^1(\Omega)$, 
with the real eigenvalues enumerated in increasing order, counting multiplicities, 
and such that $\{\varphi_k\}_{k \in \mathbb{N}}$ is an orthonormal basis of 
$L^2(\Omega)$ and an orthogonal basis of $(H_0^1(\Omega), \blfa{\Omega}(\cdot,\cdot))$. 
In terms of these eigenpairs, we introduce, for $s \ge 0$, the spaces 
\begin{equation}
\label{def:Hs}
  \Hs = \left\{ w = \sum_{k=1}^\infty w_k \varphi_k: \| w \|_{\Hs}^2 =
  \sum_{k=1}^{\infty} \lambda_k^s w_k^2 < \infty \right\}.
\end{equation}
%
We denote by $\Hsd$ the dual space of $\Hs$. 
The duality pairing between $\Hs$ and $\Hsd$ will be denoted by $\langle \cdot,\cdot \rangle$. 
Through this duality pairing, we identify 
elements of $f \in \Hsd$ with sequences $(f_k)_{k}$ 
with $\sum_{k} f_k^2 \lambda_k^{-2s} = \|f\|^2_{\Hsd}$, 
which allows us to extend the definition of the norm in \eqref{def:Hs} to $s < 0$.
We have the isometries $\|w\|^2_{L^2(\Omega)} = \|w\|^2_{{\mathbb H}^0}$ and 
$\blfa{\Omega}(w,w) = \|w\|^2_{{\mathbb H}^1}$; 
by (real) interpolation between $L^2(\Omega)$ and $H^1_0(\Omega)$, 
we infer for $s \in (0,1)$ that $\Hs = [L^2(\Omega), H^1_0(\Omega)]_s$.  

For functions $w = \sum_{k} w_k \varphi_k \in {\mathbb H}^1(\Omega)$, 
the operator $\calL:{\mathbb H}^1(\Omega)\rightarrow {\mathbb H}^{-1}(\Omega)$ 
takes the form $\calL w = \sum_{k} \lambda_k w_k \varphi_k$. For 
$s \in (0,1)$ and $w  = \sum_{k} w_k \varphi_k \in {\mathbb H}^s(\Omega)$, 
the operator $\calL^s:\Hs \rightarrow \Hsd$ is defined by 
\begin{equation} \label{def:second_frac}
\calL^s w  
= 
\sum_{k=1}^\infty \lambda_k^{s} w_k \varphi_k. 
\end{equation} 
%
\subsection{The extension property}
\label{sub:CaffarelliSilvestre}
Both extensions, the one by Caffarelli--Silvestre for $\Omega=\mathbb{R}^d$ \cite{CS:07}
and that of Cabr\'e--Tan \cite{CT:10} and Stinga--Torrea for $\Omega$ bounded 
and general elliptic operators \cite{ST:10} require us to deal with the \emph{nonuniformly} 
(but local) linear, second order elliptic equation  \eqref{alpha_harm_intro}.
Here,  Lebesgue and Sobolev spaces with the weight $y^{\alpha}$ for 
$\alpha \in (-1,1)$ \cite{BCdPS:12,CT:10,CS:07, CDDS:11} naturally arise.
If $D \subset \R^{d+1}$, we define $L^2(y^\alpha,D)$ as the Lebesgue space 
for the measure $|y|^{\alpha}\diff x$. 
We also define the weighted Sobolev space
\[
H^1(y^{\alpha},D) =
  \left\{ w \in L^2(y^{\alpha},D): | \nabla w | \in L^2(y^{\alpha},D) \right\},
\]
where $\nabla w$ is the distributional gradient of $w$. 
We equip $H^1(y^{\alpha},D)$ with the norm
\begin{equation}
\label{wH1norm}
\| w \|_{H^1(y^{\alpha},D)} =
\left(  \| w \|^2_{L^2(y^{\alpha},D)} + \| \nabla w \|^2_{L^2(y^{\alpha},D)} 
\right)^{\frac{1}{2}}.
\end{equation}
In view of the fact that $\alpha \in (-1,1)$, 
the weight $y^\alpha$ belongs to the Muckenhoupt class $A_2(\R^{d+1})$
\cite{Javier,FKS:82,GU,Muckenhoupt,Turesson}. 
This, in particular, implies that $H^1(y^{\alpha},D)$ with norm \eqref{wH1norm} 
is Hilbert and $C^{\infty}(D) \cap H^1(y^{\alpha},D)$ is dense in $H^1(y^{\alpha},D)$ 
(cf.~\cite[Proposition 2.1.2, Corollary 2.1.6]{Turesson}, \cite{KO84} and \cite[Theorem~1]{GU}). 

To analyze problem \eqref{alpha_harm_intro} we define the weighted Sobolev space
\begin{equation}
  \label{HL10}
  \HL(y^{\alpha},\C) = \left\{ w \in H^1(y^\alpha,\C): w = 0 \textrm{ on } \partial_L \C\right\}.
\end{equation}
As \cite[inequality (2.21)]{NOS} shows, the following \emph{weighted Poincar\'e inequality} holds:
\begin{equation}
\label{Poincare_ineq}
\| w \|_{L^2(y^{\alpha},\C)} \lesssim \| \nabla w \|_{L^2(y^{\alpha},\C)}
\quad \forall w \in \HL(y^{\alpha},\C).
\end{equation}
Consequently, the seminorm on $\HL(y^{\alpha},\C)$ is equivalent to \eqref{wH1norm}. 
For $w \in H^1(y^{\alpha},\C)$, $\tr w$ denotes its trace onto $\Omega \times \{ 0 \}$ 
which satisfies (see \cite[Proposition 2.5]{NOS})
\begin{equation}
\label{Trace_estimate}
\tr \HL(y^\alpha,\C) = \Hs,
\qquad
  \|\tr w\|_{\Hs} \leq C_{\tr} \| w \|_{\HL(y^\alpha,\C)}.
\end{equation}
Define the bilinear form 
$\blfa{\C}: \HL(y^{\alpha},\C) \times  \HL(y^{\alpha},\C) \to \R$ 
by
\begin{equation}
\label{eq:blf-a} 
\blfa{\C}(v,w) =  \int_\C y^\alpha (\bA \nabla v \cdot \nabla w + c v w) \diff x' \diff y,
\end{equation}
and note that it is continuous and, owing to \eqref{Poincare_ineq}, 
it is also coercive. 
Consequently, it induces an inner product on $\HL(y^\alpha,\C)$ and the \emph{energy norm} $\normC{\cdot}$:
\begin{equation}
\label{eq:norm-C} 
\normC{v}^2:= \blfa{\C}(v,v) \sim \|\nabla v\|^2_{L^2(y^\alpha,\C)}\;.
\end{equation}
Occasionally, we will restrict the integration to the truncated cylinder $\C_\Y$. 
The corresponding bilinear form and norm are denoted by
\begin{equation}
\label{eq:blf-A-truncated} 
\blfa{\C_\Y}(v,w) := \int_{\C_\Y} y^\alpha (\bA \nabla v \cdot \nabla w + c v w ) \diff x' \diff y, 
\qquad \normCC{v}^2 = 
\blfa{\C_\Y}(v,v)\;. 
\end{equation}
With these definitions at hand, 
the weak formulation of \eqref{alpha_harm_intro} reads: 
Find $\ue \in  \HL(y^{\alpha},\C)$ such that
\begin{equation}
\label{eq:ue-variational} 
\blfa{\C}(\ue,v) = d_s \langle f,\tr v \rangle \qquad \forall v \in \HL(y^\alpha,\C).
\end{equation}

The fundamental result of Caffarelli and Silvestre \cite{CS:07} then reads as follows
(see also \cite[Proposition 2.2]{CT:10} and \cite[Theorem 1.1]{ST:10} 
for bounded domains and for general elliptic operators):
given $f \in \Hsd$, let $u \in \Hs$ solve \eqref{fl=f_bdddom}. 
If $\ue \in \HL(y^{\alpha},\C)$ solves \eqref{eq:ue-variational}, then $u = \tr \ue$ and 
\begin{equation}
\label{eq:identity2}
  d_s \calL^s u = \partial_{\nu^\alpha} \ue \quad \text{in } \Omega.
\end{equation}
%
\section{A first order FEM for fractional diffusion}
\label{sec:apriori}
The first work that, in a numerical setting, exploits the 
identity \eqref{eq:identity2} for the design and analysis of 
a finite element approximation of solutions to \eqref{fl=f_bdddom} is \cite{NOS}; 
see also \cite{Otarola}. 
Let us briefly review the main results of \cite{NOS}. 

First, \cite{NOS} truncates $\C$ to $\C_\Y$ and 
places homogeneous Dirichlet boundary conditions on $y = \Y$, thus obtaining an approximation $\ve$
(which, by slight abuse of notation, is understood to coincide with its
 extension by zero from $\C_\Y$ to $\C$).
The error committed in this approximation is exponentially small: 
There holds (see \cite[Theorem 3.5]{NOS})
\begin{equation*}
  \| \nabla(\ue - \ve) \|_{L^2(y^{\alpha},\C )} \lesssim e^{-\sqrt{\lambda_1} \Y/4} \| f\|_{\Hsd},
\end{equation*}
where $\lambda_1>0$ is the first eigenvalue of the operator $\calL$.  

Second, \cite{NOS} develops a regularity theory for $\ue$ in 
weighted Sobolev spaces; see Theorem~\ref{thm:glob_reg_ue} below for a generalization. 
These results reveal that the second order regularity of $\ue$ 
in the extended direction is lost as $y\downarrow 0$.
Thus, \emph{graded} meshes in the extended variable $y$ play a fundamental role.
In the notation of the present work, with a mesh $\calT$ on $\Omega$ 
and a mesh $\calG^M$ on $(0,\Y)$ that is graded towards $y = 0$, the truncated
cylinder $\C_\Y$ is partitioned by tensor product elements $K \times I$ with 
$K \in \calT$ and $I \in \calG^M$. 
On this mesh, the tensor product space $\V^{1,1}_{h,M}(\calT,\calG^M)$ of piecewise 
bilinears in $\Omega\times (0,\Y)$ (see \eqref{eq:TPFE} for the precise definition) 
is used in a Galerkin method. 
The Galerkin approximation $\ue_{h,M} \in \V^{1,1}_{h,M}(\calT,\calG^M)$ 
of 
$\ve$ satisfies a best approximation property \`a la C\'ea. 
%
From there, upon studying piecewise polynomial interpolation in Muckenhoupt 
weighted Sobolev spaces \cite{NOS,NOS2} error estimates were obtained
under the assumption that $f \in \Ws$ and that $\Omega$ is convex
(see \cite[Theorem 5.4]{NOS} and \cite[Corollary 7.11]{NOS}):
\begin{theorem}[a priori error estimate]
\label{TH:fl_error_estimates}
Let $\calG^M$ be suitably graded towards $y = 0$ and $\V^{1,1}_{h,M}$ 
be constructed with tensor product elements 
and $\ue_{h,M} \in \V^{1,1}_{h,M}$ denote the Galerkin approximation to $\ve$.  
Then, for suitable truncation parameter $\Y \sim \log \calN_{\Omega,\Y}$ we have, 
with the total number of unkowns 
$\calN_{\Omega,\Y}:= \# \calT \#\calG^M$ 
\begin{align*}
  \| u - \tr \ue_{h,M} \|_{\Hs} &\lesssim
  \| \nabla(\ue - \ue_{h,M}) \|_{L^2(y^\alpha,\C)} \\ &\lesssim
|\log \calN_{\Omega,\Y} |^s(\calN_{\Omega,\Y})^{-1/(d+1)} \|f \|_{\mathbb{H}^{1-s}(\Omega)}.
\end{align*}
\end{theorem}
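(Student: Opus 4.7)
The plan is to assemble the estimate from three ingredients that are essentially already in place: (i) the exponential truncation bound quoted immediately before the theorem, (ii) Galerkin quasi-optimality on the truncated cylinder, and (iii) an anisotropic interpolation estimate in the Muckenhoupt-weighted norm that is sensitive to the singular behaviour of $\ue$ as $y \downarrow 0$. By the triangle inequality,
\begin{equation*}
\normC{\ue - \ue_{h,M}} \le \normC{\ue - \ve} + \normC{\ve - \ue_{h,M}},
\end{equation*}
and the first term is already $\lesssim e^{-\sqrt{\lambda_1}\Y/4}\|f\|_{\Hsd}$. For the second term, since $\ue_{h,M} \in \V^{1,1}_{h,M}$ is the $\blfa{\C_\Y}$-Galerkin approximation of $\ve$, C\'ea's lemma together with \eqref{eq:norm-C} gives
\begin{equation*}
\normCC{\ve - \ue_{h,M}} \lesssim \inf_{v_{h,M} \in \V^{1,1}_{h,M}} \normCC{\ve - v_{h,M}}.
\end{equation*}
Choosing $v_{h,M} = \Pi_{h,M}\ve$ for a tensor-product quasi-interpolant $\Pi_{h,M} = \Pi_h^{x'} \otimes \Pi_M^y$ reduces everything to an interpolation error estimate.

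Next I would invoke the weighted regularity theory for $\ue$ (Theorem~\ref{thm:glob_reg_ue} ahead, which generalizes the NOS bounds): one has $\|D^2_{x'}\ue\|_{L^2(y^\alpha,\C)} \lesssim \|f\|_{\Ws}$, using the convexity of $\Omega$ to secure full $H^2$-regularity in $x'$, while the second $y$-derivative is only controlled with an additional weight that reflects the $y^{2s}$ singularity at the base. Concretely one has bounds of the form $\|y^{\alpha+2\sigma-1}\ue_{yy}\|_{L^2(\C)}\lesssim \|f\|_{\Ws}$ for $\sigma > 1/2$ sufficiently close to $1/2$. The tangential direction therefore yields the standard $P_1$ interpolation bound $h \|D^2_{x'}\ue\|_{L^2(y^\alpha,K\times I)}$ on each element, while in $y$ the appropriate interpolation result for Muckenhoupt-weighted spaces (from \cite{NOS,NOS2}) delivers an error that, on the graded mesh $\calG^M$ with local step $h_I \sim M^{-\gamma}(y/\Y)^{1-1/\gamma}$ for a grading exponent $\gamma > 1/(2s)$, behaves like $M^{-1}\|f\|_{\Ws}$ after summation across slabs. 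Combining the two directions with the quasi-uniform mesh $\calT$ of size $h \sim (\#\calT)^{-1/d}$ in $\Omega$ and with $M \sim \#\calG^M$ in $y$ yields
\begin{equation*}
\inf_{v_{h,M}} \normCC{\ve - v_{h,M}} \lesssim \bigl( h + M^{-1}\bigr)\, \|f\|_{\Ws}.
\end{equation*}

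Finally I would balance the mesh parameters by choosing $\#\calT \sim (\#\calG^M)^d$, so that $h \sim M^{-1}$ and both factors contribute a common rate $\calN_{\Omega,\Y}^{-1/(d+1)}$ with $\calN_{\Omega,\Y} = \#\calT\,\#\calG^M$. The cutoff $\Y$ is chosen as $\Y \sim \log \calN_{\Omega,\Y}$ so that the exponential truncation error is absorbed into the algebraic rate; the $\Y$-dependence in the interpolation estimate produces the factor $|\log \calN_{\Omega,\Y}|^s$ (it enters through the weight $y^\alpha$ on the last subinterval $(\Y/2,\Y)$ and through the $H^s$-to-energy trace constant). The bound $\|u - \tr\ue_{h,M}\|_{\Hs} \lesssim \normC{\ue - \ue_{h,M}}$ follows from \eqref{Trace_estimate}.

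The main obstacle is the $y$-direction interpolation estimate on the graded mesh: one must quantify precisely, in the weighted $L^2(y^\alpha)$ norm, how the singular behaviour of $\ue_{yy}$ near $y=0$ is compensated by the mesh grading, and how to sum the per-element contributions without losing the target rate. This is where the Muckenhoupt machinery of \cite{NOS,NOS2} is essential, and where the logarithmic factor $|\log\calN_{\Omega,\Y}|^s$ arises from the interaction between the truncation height $\Y$, the weight $y^\alpha$, and the grading exponent $\gamma$.
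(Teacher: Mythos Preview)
The paper does not supply a proof of this theorem; it is quoted from \cite{NOS} (specifically \cite[Theorem~5.4, Corollary~7.11]{NOS}) as background for the new results that follow. Your sketch reproduces the \cite{NOS} argument faithfully: split into truncation plus Galerkin error, invoke C\'ea on $\C_\Y$, build a tensor-product quasi-interpolant, use full $H^2(\Omega)$-regularity in $x'$ (this is where convexity of $\Omega$ enters) together with the weighted bounds on $\ue_{yy}$, absorb the $y$-singularity by a radical mesh grading with exponent $\gamma > 3/(1-\alpha)$, and finally balance $h \sim M^{-1}$ so that $h \sim M^{-1} \sim \calN_{\Omega,\Y}^{-1/(d+1)}$.

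One point deserves sharpening. The factor $|\log\calN_{\Omega,\Y}|^s$ does not come from the trace inequality (the constant $C_{\tr}$ in \eqref{Trace_estimate} is on the infinite cylinder and is independent of $\Y$) nor from the weight on the terminal $y$-subinterval. In \cite{NOS} the graded mesh on $(0,\Y)$ has nodes $y_m = (m/M)^\gamma \Y$, so every element length carries a factor of $\Y$; the weighted interpolation estimate in the extended direction then produces an overall factor $\Y^{(1-\alpha)/2} = \Y^s$, and the choice $\Y \sim \log\calN_{\Omega,\Y}$ converts this into the stated logarithm. With this correction your outline matches the cited proof.
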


\begin{remark}[complexity]
Up to logarithmic factors, Theorem~\ref{TH:fl_error_estimates} 
yields rates of convergence of
$(\calN_{\Omega,\Y})^{-1/(d+1)}$. 
In terms of error versus work, this $P_1$-FEM
is sub-optimal as a method to compute in $\Omega$. 
In this paper we propose and study $P_1$-FE methods in $\Omega$ that 
afford an error decay $(\calN_{\Omega,\Y})^{-1/d}$ 
(up to possibly logarithmic terms).
\eremk
\end{remark}

\section{Analytic regularity}\label{S:analytic-regularity}
We obtain regularity results for the solution of \eqref{alpha_harm_intro} 
that will underlie the analysis of the various FEMs in Section~\ref{S:FEMy} and \ref{sec:hpx}. 
We begin by recalling 
that if $u = \sum_{k=1}^{\infty} u_k \varphi_k $ solves \eqref{fl=f_bdddom}, 
then the unique solution $\ue$ 
of problem \eqref{alpha_harm_intro} admits the representation \cite[formula (2.24)]{NOS}
\begin{equation} \label{eq:SepVar}
\ue(x',y) = \sum_{k=1}^\infty u_k \varphi_k(x') \psi_k(y),\quad  u_k := \lambda_k^{-s} f_k .
\end{equation}
We also recall that $\{ \lambda_k, \varphi_k \}_{k \in \mathbb{N}}$ is the set of eigenpairs of the elliptic operator $\calL$, supplemented with homogeneous Dirichlet boundary conditions. The functions $\psi_k$ solve
\begin{equation}
\label{psik}
\begin{dcases}
  \frac{\diff^2}{\diff y^2}\psi_k(y) + \frac{\alpha}{y} \frac{\diff}{\diff y} \psi_k(y) - \lambda_k \psi_k(y) = 0, 
& y \in (0,\infty), 
\\
\psi_k(0) = 1, & \lim_{y\rightarrow \infty} \psi_k(y) = 0.
\end{dcases}
\end{equation}
Thus, if $s = \tfrac{1}{2}$, we have 
$\psi_k(y) = \exp(-\sqrt{\lambda_k}y)$ \cite[Lemma 2.10]{CT:10}; more generally, 
if $s \in (0,1) \setminus \{ \tfrac{1}{2}\}$, then \cite[Proposition 2.1]{CDDS:11}
\[
 \psi_k(y) = c_s (\sqrt{\lambda_k}y)^s K_s(\sqrt{\lambda_k}y),
\]
where $c_s = 2^{1-s}/\Gamma(s)$ and $K_s$ denotes the modified Bessel function of the second kind. 
We refer the reader to \cite[Chapter 9.6]{Abra} 
for a comprehensive treatment of the Bessel function $K_s$ 
and recall the following properties.

\begin{lemma}[properties of $K_\nu$]
\label{lemma:bessel}
The modified Bessel function of the second kind $K_\nu$ satisfies:
\begin{enumerate}[(i)]

\item \label{itemi} 
For $\nu >-1$ and $z>0$, $K_{\nu}(z)$ is real and positive \cite[Chapter 9.6]{Abra}.

\item \label{itemii} 
For $\nu \in \R$, $K_{\nu}(z) = K_{-\nu}(z)$ \cite[Chapter 9.6]{Abra}.
  
\item \label{itemiii}
For $\nu > 0$, \cite[estimate (9.6.9)]{Abra} 
\begin{equation}
\label{asympat0}
\lim_{z \downarrow 0} \frac{K_{\nu}(z)}{\sr \Gamma(\nu) \left( \sr z \right)^{-\nu} } = 1.
\end{equation}
  
\item \label{itemiv}For $\ell \in \mathbb{N}$, \cite[formula (9.6.28)]{Abra}
\begin{equation}
\label{eq:diff_formula}
\left( \frac{1}{z} \frac{\diff }{\diff z}\right)^{\ell} \left( z^{\nu} K_{\nu}(z) \right)
= (-1)^{\ell} z^{\nu-\ell}K_{\nu-\ell}(z).
\end{equation}
  
\item \label{itemv} 
For $z > 0$, $z^{\min\{ \nu,1/2 \}} e^z K_{\nu}(z)$ is a decreasing function  
\cite[Theorem 5]{MS:01}.
  
\item \label{itemvi} 
For $\nu >0$, \cite[estimate (9.7.2)]{Abra} 
\[
 K_{\nu}(z) \sim \sqrt{ \frac{\pi}{2z} } e^{-z}, \qquad z \rightarrow \infty, 
\quad | \arg z | \leq 3\pi/2-\delta, \quad \delta >0.
\]
\end{enumerate}
\end{lemma}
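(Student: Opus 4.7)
The plan is to treat the lemma as a compilation of classical facts about the modified Bessel function $K_\nu$ and verify each item either by direct citation to standard references or by a short derivation from the integral representation
\[
K_\nu(z) = \frac{\sqrt{\pi}\,(z/2)^\nu}{\Gamma(\nu+\tfrac12)}\int_1^\infty e^{-zt}(t^2-1)^{\nu-1/2}\,\diff t, \qquad \nu > -\tfrac12,\; z>0,
\]
and from the defining modified Bessel equation
\[
z^2 w'' + z w' - (z^2+\nu^2)w = 0.
\]
Items (i) and (ii) can be read off immediately: positivity for $\nu>-\tfrac12$ follows from the integrand being positive, and the general case $\nu>-1$ from analytic continuation via the recurrence $K_{\nu+1}(z) = K_{\nu-1}(z) + (2\nu/z)K_\nu(z)$ together with induction on the integer part of $\nu$; the symmetry $K_\nu = K_{-\nu}$ follows because the Bessel ODE depends only on $\nu^2$ and the decay condition at infinity singles out a one-dimensional solution space.

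For (iii), I would use the classical series expansion
\[
K_\nu(z) = \tfrac12 \Gamma(\nu)\bigl(\tfrac{z}{2}\bigr)^{-\nu}\bigl(1 + O(z^2)\bigr) + O\bigl(z^{\nu}\bigr), \qquad z\downarrow 0,
\]
valid for $\nu>0$, from which \eqref{asympat0} is the leading term. For (iv), the identity \eqref{eq:diff_formula} follows by induction on $\ell$ from the single-step formula $\frac{\diff}{\diff z}(z^\nu K_\nu(z)) = -z^\nu K_{\nu-1}(z)$, which in turn is a direct consequence of the recurrence relations for $K_\nu$; the $\ell=1$ step is a one-line computation, and the inductive step applies the operator $\tfrac{1}{z}\tfrac{\diff}{\diff z}$ to $z^{\nu-\ell}K_{\nu-\ell}(z)$ and uses the same single-step formula with $\nu$ replaced by $\nu-\ell$. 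For (vi), I would invoke the standard large-argument asymptotic expansion obtained by the method of steepest descent applied to the integral representation above.

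The only nontrivial item is (v), the monotonicity of $z\mapsto z^{\min\{\nu,1/2\}}e^z K_\nu(z)$. This is not immediate from the elementary representations above and genuinely requires the argument in \cite[Theorem~5]{MS:01}, which combines an integral representation for $(e^z K_\nu(z))'$ with sharp comparison inequalities; this is the one step I would simply cite rather than reprove. Since the lemma serves only as a reference toolkit for the subsequent analysis of $\psi_k$ in \eqref{psik}, citing \cite{Abra} for (i)--(iv), (vi) and \cite{MS:01} for (v) suffices and no new mathematical content is required.
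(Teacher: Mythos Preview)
Your proposal is correct and matches the paper's approach: the paper provides no separate proof for this lemma at all, relying entirely on the inline citations to \cite{Abra} and \cite{MS:01} embedded in the statement itself. Your sketches for items (i)--(iv) and (vi) go beyond what the paper does, and you correctly identify (v) as the one item requiring the nontrivial result of \cite{MS:01}.
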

\begin{remark}[consistency for $s= \frac{1}{2}$]
\label{rk:consistency}
A basic computation allows us to conclude that $c_{\frac{1}{2}} = \sqrt{\frac{2}{\pi}}$. 
On the other hand, 
formulas (9.2.10) and (9.6.10) in \cite{Abra} 
yield $K_{\frac{1}{2}}(z) = \sqrt{\tfrac{\pi}{2z}}e^{-z}$. 
We thus have arrived at
 \[
  \lim_{s \rightarrow \frac{1}{2}}\psi_k(y)= \exp(-\sqrt{\lambda_k}y) \quad \forall y>0
 \]
 for all $y>0$.
\eremk
\end{remark}

We now analyze the regularity properties of $\ue$ when $s \in (0,1)$.
On the basis of the representation formula \eqref{eq:SepVar} we see that 
it is essential to derive regularity estimates for the solution $\psi_k$ of problem \eqref{psik}. 
To accomplish this task, we define the function $\psi(z) = c_s z^s K_s(z)$ and notice that
\begin{equation}
\label{eq:phi}
\frac{\diff^2}{\diff z^2}\psi(z)- \psi(z) + \frac{\alpha}{z} \frac{\diff}{\diff z}\psi(z) = 0, \quad 
z \in (0,\infty), \quad \psi(0) = 1, \quad \lim_{z\rightarrow \infty} \psi(z) = 0.
\end{equation}
This, for any $\ell \in \mathbb{N}_0$, allows us to obtain that 
\begin{align*}
\frac{\diff^{\ell+2}}{\diff z^{\ell+2}} \psi(z) 
&=  \frac{\diff^\ell}{\diff z^{\ell}} \psi(z) - \alpha \frac{\diff^\ell}{\diff z^{\ell}} \left( z^{-1} \frac{\diff}{\diff z} \psi(z)  \right) 
\\
&=  \frac{\diff^\ell}{\diff z^{\ell}} \psi(z) - \alpha \sum_{j=0}^\ell \binom{\ell}{j}  
\frac{\diff^j}{\diff z^{j}}(z^{-1}) \frac{\diff^{\ell-j}}{\diff z^{\ell-j}}\psi'(z).
\end{align*}
We thus have arrived at the bound
\begin{equation}
\label{eq:psi_kl+2}
\left| \frac{\diff^{\ell+2}}{\diff z^{\ell+2}} \psi(z) \right| 
\leq 
\left| \frac{\diff^\ell}{\diff z^{\ell}} \psi(z) \right| 
+ |\alpha| \sum_{j=0}^\ell \frac{\ell!}{(\ell-j)!} z^{-(1+j)} 
\left| \frac{\diff^{\ell+1-j}}{\diff z^{\ell+1-j}} \psi(z) \right|,
\end{equation}
which is essential to derive the following asymptotic result.

\begin{lemma}[behavior of $\psi$ near $z=0$]
Let $\psi$ solve \eqref{eq:phi}, $z \in (0,1)$ and $\ell \in \mathbb{N}$. 
Then there is a constant $C$ independent of $z$, $\ell$ and $s$ such that
\begin{equation}
\label{eq:by_induction}
  \left| \frac{\diff^{\ell}}{\diff z^{\ell}} \psi(z) \right| \leq C d_s \ell!  z^{2s-\ell},
 \end{equation}
where, as before, $d_s = 2^{1-2s}\Gamma(1-s)/\Gamma(s)$.
\label{lem:nearz0}
\end{lemma}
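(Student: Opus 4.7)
The plan is to establish \eqref{eq:by_induction} by induction on $\ell$, using the recurrence \eqref{eq:psi_kl+2} for the induction step and reducing the base cases $\ell=1,2$ to explicit identities for the Bessel function $K_s$ collected in Lemma~\ref{lemma:bessel}.

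For the base case $\ell=1$, the differentiation identity \eqref{eq:diff_formula} with $\nu=s$ and the recursion $\frac{\diff}{\diff z}(z^s K_s(z)) = -z^s K_{s-1}(z)$ give $\psi'(z)=-c_s z^s K_{s-1}(z)=-c_s z^s K_{1-s}(z)$, using the symmetry $K_\nu=K_{-\nu}$. The asymptotic \eqref{asympat0} applied with $\nu=1-s$ yields $K_{1-s}(z)\sim \tfrac{1}{2}\Gamma(1-s)(z/2)^{-(1-s)}$ as $z\downarrow 0$, and substituting $c_s=2^{1-s}/\Gamma(s)$ reproduces precisely the combination $d_s=2^{1-2s}\Gamma(1-s)/\Gamma(s)$, giving $|\psi'(z)|\le C d_s z^{2s-1}$ in a neighborhood of $0$. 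Property (v) of Lemma~\ref{lemma:bessel} then extends this to a bound valid on the whole of $(0,1)$ with a constant independent of $s$. The case $\ell=2$ is analogous: one more differentiation gives $\psi''(z)=-c_s z^{s-1}K_{1-s}(z)+c_s z^s K_{2-s}(z)$, and both terms are of order $d_s z^{2s-2}$ by the same asymptotic plus monotonicity argument.

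For the induction step, assume \eqref{eq:by_induction} holds for all derivatives of order between $1$ and $\ell+1$. Substituting into \eqref{eq:psi_kl+2} and using $z\in(0,1)$ to absorb $z^2$ in the first term, every summand on the right-hand side is bounded by a multiple of $C d_s z^{2s-\ell-2}$. What remains is combinatorial: the factorial prefactor evaluates to
\[
\ell! + |\alpha|\sum_{j=0}^{\ell}\frac{\ell!\,(\ell+1-j)!}{(\ell-j)!} = \ell! + |\alpha|\,\ell!\,\frac{(\ell+1)(\ell+2)}{2},
\]
which is bounded by $(\ell+2)!$ because $|\alpha|<1$ and $\ell\ge 0$. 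Hence the induction closes with the same constant $C$ as in the base cases.

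The main obstacle is the base case: one must extract exactly the factor $d_s$ (not merely some $s$-dependent constant) and produce a bound uniform in both $z\in(0,1)$ and $s\in(0,1)$. Since \eqref{asympat0} is only a pointwise statement at $z=0$, the monotonicity property (v) is essential to propagate the local estimate to the whole interval, and the two regimes $s\le 1/2$ and $s>1/2$ must in principle be handled separately because the exponent $\min\{1-s,1/2\}$ in that monotonicity statement changes. Once these uniform base estimates are established, the induction step itself is routine algebra.
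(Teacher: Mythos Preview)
Your argument is essentially the paper's: induction on $\ell$ via \eqref{eq:psi_kl+2}, with the base case handled through \eqref{eq:diff_formula} and \eqref{asympat0}, and the same combinatorial bound $\ell!\bigl(1+\tfrac{|\alpha|}{2}(\ell+1)(\ell+2)\bigr)\le (\ell+2)!$ in the step. You are in fact more careful than the paper in treating $\ell=2$ as a separate base case; this is genuinely needed, since the recursion for $\psi^{(\ell+2)}$ invokes $\psi^{(\ell)}$ and therefore cannot be started at $\ell=0$ (the bound \eqref{eq:by_induction} with $\ell=0$ is false). One refinement: the monotonicity that carries the asymptotic uniformly in $s$ over $(0,1)$ is not item~(\ref{itemv}) but item~(\ref{itemiv}) with $\ell=1$ combined with~(\ref{itemi}), which shows $z^{\nu}K_{\nu}(z)$ is decreasing for every $\nu>0$ and hence bounded by its limit $\Gamma(\nu)2^{\nu-1}$ from~(\ref{itemiii}); this avoids your $s\lessgtr\tfrac12$ split altogether and yields $|\psi'(z)|\le d_s z^{2s-1}$ and $|\psi''(z)|\le 3\,d_s z^{2s-2}$ directly.
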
 
\begin{proof}
We proceed by induction. Let us first assume that $\ell=1$. 
The differentiation formula \eqref{eq:diff_formula} with $\ell = 1$ yields that
\begin{equation}
\label{eq:phi_prime}
  \psi'(z) = c_s (z^s K_s(z) )' = -c_s z^s K_{s-1}(z) = -c_s z^s K_{1-s}(z),
\end{equation}
where we used Lemma~\ref{lemma:bessel} \eqref{itemii}. 
The asymptotic formula \eqref{asympat0} shows that there is 
$\tilde C$ independent of $s$ such that, for every $z \in (0,1)$, we have
\[
  \left| \frac{ K_{1-s}(z) }{ \tfrac12 \Gamma(1-s) (\tfrac12 z )^{-(1-s)} } -1 \right| \leq \tilde C.
\]
Set $C = \tilde{C} + 1$ to arrive at the fact that we have, for all $z \in (0,1)$,
\[
  \left| \frac{\diff}{\diff z} \psi(z) \right| \leq \left| \frac{ K_{1-s}(z) }{ \tfrac12 \Gamma(1-s) (\tfrac12 z )^{-(1-s)} } \right| \left( \frac12 \Gamma(1-s) \left(\frac12 z \right)^{-(1-s)}  \right)c_s z^s \leq C d_s z^{2s-1},
\]
which is \eqref{eq:by_induction} for $\ell=1$.

We now assume that \eqref{eq:by_induction} holds for every $j \leq \ell +1 $. 
This, on the basis of the bound \eqref{eq:psi_kl+2}, implies that
\begin{align*}
   \left| \frac{\diff^{\ell+2}}{\diff z^{\ell+2}} \psi(z) \right| & \leq C d_s \ell! z^{2s-\ell}  + 
   C d_s z^{2s-\ell-2} \sum_{j=0}^\ell \frac{\ell!}{(\ell-j)!} (\ell+1-j)!
   \\
   & \leq  C d_s \ell!  z^{2s-\ell-2} \left[ 1 + \sum_{i=1}^{\ell + 1} i\right] 
   = C d_s \ell!  z^{2s-\ell-2}\left[ 1 + \frac{1}{2}(\ell + 1)(\ell + 2) \right],
\end{align*}
because $z \in (0,1)$. 
Therefore
\[
\left| \frac{\diff^{\ell+2}}{\diff z^{\ell+2}} \psi(z) \right| \leq C d_s (\ell+2)! z^{2s-\ell-2},
\]
as we intended to show.
\end{proof}

We now analyze the behavior of $\psi$ for large values of $z$. 
In particular, we will show that $\psi$ and all its derivatives decay exponentially 
with respect to $z$. 

\begin{lemma}[behavior of $\psi$ for $z$ large]
Let $\psi$ solve \eqref{eq:phi}, $z \geq 1$, $\ell \in \mathbb{N}_0$ 
and $\epsilon \in (0,1)$. 
Then there is a constant $C_{\epsilon,s}$ that is
independent of $z$ and $\ell$ such that
\begin{equation}
\label{eq:by_induction_exp}
\left| \frac{\diff^{\ell}}{\diff z^{\ell}} \psi(z) \right| \leq
C_{\epsilon,s} \ell! \epsilon^{-\ell} z^{s-\ell-\frac{1}{2}} e^{-(1-\epsilon) z},
\end{equation}
where $C_{\epsilon,s}$ blows up when $\epsilon \uparrow 1$.
\label{lem:nearzinfty}
\end{lemma}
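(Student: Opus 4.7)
I would prove the bound by Cauchy's integral formula applied in the complex plane. The starting point is that $\psi(\zeta) = c_s \zeta^s K_s(\zeta)$ extends to a holomorphic function on the slit plane $\mathbb{C}\setminus(-\infty,0]$, since $K_s$ is holomorphic off the negative real axis and $\zeta^s$ admits the principal branch there. For $z\ge 1$ and a fixed $\epsilon \in (0,1)$, the circle $\gamma_z$ of radius $\epsilon z$ centered at $z$ lies entirely in the open right half plane, so Cauchy's formula yields
\[
\psi^{(\ell)}(z) = \frac{\ell!}{2\pi i}\oint_{\gamma_z}\frac{\psi(\zeta)}{(\zeta-z)^{\ell+1}}\, d\zeta,
\qquad
|\psi^{(\ell)}(z)| \le \ell!\,(\epsilon z)^{-\ell}\,\sup_{\zeta\in\gamma_z}|\psi(\zeta)|.
\]

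\textbf{Bound on the contour.} Every $\zeta \in \gamma_z$ satisfies $(1-\epsilon)z \le \Re\zeta \le |\zeta| \le (1+\epsilon)z$. The key analytic input is a uniform right half plane bound
\[
|K_s(\zeta)| \le C_{\epsilon,s}\,|\zeta|^{-1/2}\,e^{-\Re\zeta}, \qquad \Re\zeta \ge 1-\epsilon,
\]
which combines the large $|\zeta|$ asymptotics of Lemma~\ref{lemma:bessel}\eqref{itemvi} with a pointwise control for bounded $|\zeta|$. The latter follows from the integral representation $K_s(\zeta) = \int_0^\infty e^{-\zeta\cosh t}\cosh(st)\,dt$, valid for $\Re\zeta>0$, which immediately yields $|K_s(\zeta)| \le K_s(\Re\zeta)$, and continuity of $K_s$ on compact subintervals of $(0,\infty)$ supplies the rest. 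Since $|\zeta^s| \le (1+\epsilon)^s z^s$ on $\gamma_z$, I obtain
\[
|\psi(\zeta)| \le C_{\epsilon,s}\, z^{s-1/2}\, e^{-(1-\epsilon)z} \quad \text{uniformly on } \gamma_z.
\]

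\textbf{Assembly and main obstacle.} Substituting this into the Cauchy estimate gives
\[
|\psi^{(\ell)}(z)| \le C_{\epsilon,s}\,\ell!\,\epsilon^{-\ell}\, z^{s-\ell-1/2}\, e^{-(1-\epsilon)z},
\]
which is precisely \eqref{eq:by_induction_exp}. The principal obstacle is the uniform complex bound on $K_s$: Lemma~\ref{lemma:bessel}\eqref{itemvi} is only an asymptotic equivalence as $|\zeta|\to\infty$ inside a sector, and not a pointwise inequality, so one must either invoke a sharper classical estimate or use the integral representation above to manufacture one. Once that estimate is in hand, the blow up of $C_{\epsilon,s}$ as $\epsilon\uparrow 1$ is simply a consequence of the unboundedness of $K_s(x)$ as $x\downarrow 0$, since for large $\epsilon$ the contour $\gamma_z$ (with $z=1$) can bring $\Re\zeta$ arbitrarily close to the imaginary axis. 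An alternative route through induction on $\ell$ using the ODE \eqref{eq:phi} and the recurrence \eqref{eq:psi_kl+2} appears less clean, since the extra polynomial factors of $z$ produced at each step cannot be absorbed by a fixed exponential rate $1-\epsilon$.
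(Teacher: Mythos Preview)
Your proof is correct and takes essentially the same approach as the paper: Cauchy's integral formula on the circle of radius $\epsilon z$ about $z$, followed by a uniform bound on $\psi$ along that contour coming from the large-argument behavior of $K_s$. If anything, your treatment of the complex bound on $K_s$ via the integral representation $|K_s(\zeta)|\le K_s(\Re\zeta)$ is more careful than the paper's, which simply cites the sectorial asymptotic of Lemma~\ref{lemma:bessel}\eqref{itemvi} and records $C_{\epsilon,s}=C\max\{(1+\epsilon)^{s-1/2},(1-\epsilon)^{s-1/2}\}$ without dwelling on the passage from asymptotic equivalence to pointwise inequality.
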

\begin{proof}
The proof is a consequence of Cauchy's integral 
formula for derivatives \cite{MR510197,MR0147623} 
and Lemma~\ref{lemma:bessel} \eqref{itemvi}. Let $\epsilon \in (0,1)$ 
and $B_{\sigma}(\zeta)$ denote the ball with center $\zeta$ 
and radius $\sigma$. For a fixed $z \geq 1$, we thus have that
\[
\left| \frac{\diff^{\ell}}{\diff z^{\ell}} \psi(z) \right| 
= 
\left| \frac{\ell!}{2 \pi i} 
\int_{\zeta \in \partial B_{\epsilon z}(z)} \frac{\psi(\zeta)}{(\zeta-z)^{\ell+1}} \diff \zeta \right| 
\leq \ell! \epsilon^{-\ell} z^{-\ell} \max_{\zeta \in \partial B_{\epsilon z}(z)} | \psi(\zeta) |,
\]
where $\ell \in \mathbb{N}_0$. 
We now recall that $\psi(z) = c_s z^s K_s(z)$ and invoke Lemma~\ref{lemma:bessel} \eqref{itemvi} to conclude that
\[
 \left| \frac{\diff^{\ell}}{\diff z^{\ell}} \psi(z) \right| \leq C_{\epsilon,s} c_s \ell!
 \epsilon^{-\ell} z^{s-\ell-\frac{1}{2}}e^{-(1-\epsilon) z},
\]
with $C_{\epsilon,s} = C \max \{ (1+\epsilon)^{s-\frac{1}{2}},(1-\epsilon)^{s-\frac{1}{2}} \}$ 
and $C$ such that $K_{s}(z) \leq C z^{-\frac{1}{2}}e^{-z}$ for $z \geq 1$. 
Notice that $C_{\epsilon,s}$ can be bounded independently of $s \in (0,1)$ 
and that blows up when $\epsilon \uparrow 1$. This concludes the proof.
\end{proof}

\begin{remark}[Cauchy's integral formula]
The technique used in the proof of Lemma \ref{lem:nearzinfty} that is based on the 
Cauchy's integral formula can also be applied to analyze the behavior of $\psi$ near $z=0$. 
However, the obtained estimate with such a technique is not quite as sharp 
as \eqref{eq:by_induction} since it includes the term 
$\epsilon^{-\ell}$ with $\epsilon \in (0,1)$, 
as it appears in the estimate \eqref{eq:by_induction_exp}.
\eremk
\end{remark}

To analyze global regularity properties 
of the $\alpha$--harmonic extension $\ue$, 
we define the weight
\begin{equation}
\label{eq:weight}
\omega_{\beta,\gamma}(y) = y^{\beta}e^{\gamma y}, \qquad 0 \leq \gamma < 2 \sqrt{\lambda_1},
\end{equation}
with a parameter $\beta\in \R$ that will be specified later, and we recall that the parameter $\lambda_1>0$ is the smallest eigenvalue of $\calL$. With the weight \eqref{eq:weight} at hand, we define the weighted norm
\begin{equation}
 \label{eq:weighted_norm}
 \| v \|_{L^2(\omega_{\beta,\gamma},\C)} 
:= \left( \int_0^{\infty} \int_{\Omega} \omega_{\beta,\gamma}(y) 
          |v(x',y)|^2 \diff x' \diff y
 \right)^{\frac{1}{2}}.
\end{equation}

We now proceed to study how certain weighted integrals 
of the derivatives of $\psi$ behave. 
To do so, we define, for $\beta$, $\delta \in \mathbb{R}$, $\ell \in \mathbb N$, and $\lambda > 0$
\begin{equation}
\label{eq:defofbigPhi}
\Phi(\delta,\gamma,\lambda) 
= 
\int_{ 0 }^{ \infty } z^{ \delta }e^{\gamma z/\sqrt{\lambda}} \left| \psi(z) \right|^2 \diff z
\end{equation}
and
\begin{equation}
\label{eq:defofbigPsi}
\Psi_\ell(\beta,\gamma,\lambda) 
= \int_{ 0 }^{ \infty } z^{\beta + 2\ell}e^{\gamma z/\sqrt{\lambda}} \left| \frac{\diff^\ell}{\diff z^\ell} \psi(z) \right|^2 \diff z;
\end{equation}
$\gamma$ is such that \eqref{eq:weight} holds.
Let us now bound the integrals $\Phi(\delta,\gamma,\lambda)$ and $\Psi_\ell(\beta,\gamma,\lambda)$.

\begin{lemma}[bounds on $\Phi$ and $\Psi_\ell$]
\label{lem:bound_Psi_finite_interval}
Let $\delta > -1$, $\beta > -1 - 4s$, $\ell \in \bbN$ and 
let $\gamma$ be such that $0 \leq \gamma < 2 \sqrt{\lambda_1}$. 
If $\lambda \geq \lambda_1$, then we have that
\begin{equation}
\label{eq:bound_on_Phi}
\Phi(\delta,\gamma,\lambda) \lesssim 1,
\end{equation}
where the hidden constant is independent of $\lambda$. 
In addition, there exists $\kappa > 1$ such that we have the following bound
\begin{equation}
\label{eq:bound_on_Psi}
\Psi_\ell(\beta,\gamma,\lambda) \lesssim \kappa^{2\ell} (\ell!)^2,
\end{equation}
where the hidden constant is independent of $\ell$ and $\lambda$.
\end{lemma}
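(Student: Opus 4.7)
The plan is to prove both bounds by splitting the integration domain $(0,\infty)$ into $(0,1)$ and $(1,\infty)$ and invoking Lemmas~\ref{lem:nearz0} and \ref{lem:nearzinfty} respectively for $\Psi_\ell$, and the known small-$z$ behavior together with Lemma~\ref{lemma:bessel}\eqref{itemvi} for $\Phi$. The single observation that secures uniformity in $\lambda$ is that, for any $\lambda\ge\lambda_1$, the weight factor satisfies $\gamma/\sqrt{\lambda}\le \gamma/\sqrt{\lambda_1}=:\mu<2$, so the exponential in the integrand is dominated by $e^{\mu z}$ which is independent of $\lambda$.

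For \eqref{eq:bound_on_Phi}, on $(0,1)$ the function $\psi$ is continuous with $\psi(0)=1$, so $|\psi(z)|^2\lesssim 1$ and the integral is controlled by $\int_0^1 z^\delta e^{\mu z}\diff z<\infty$ thanks to $\delta>-1$. On $(1,\infty)$, Lemma~\ref{lemma:bessel}\eqref{itemvi} (applied to $\psi(z)=c_s z^s K_s(z)$) yields $|\psi(z)|\lesssim z^{s-1/2}e^{-z}$, so the integrand is bounded by $z^{\delta+2s-1}e^{(\mu-2)z}$, which is integrable since $\mu-2<0$. Both pieces are finite constants independent of $\lambda$.

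For \eqref{eq:bound_on_Psi}, on $(0,1)$ I will apply Lemma~\ref{lem:nearz0} to get $|\psi^{(\ell)}(z)|^2\lesssim (\ell!)^2 z^{4s-2\ell}$; after multiplying by $z^{\beta+2\ell}$ the factor $z^{-2\ell}$ cancels and the remaining integral $\int_0^1 z^{\beta+4s}e^{\mu z}\diff z$ is finite precisely because $\beta>-1-4s$. This yields the contribution $\lesssim (\ell!)^2$. On $(1,\infty)$, I fix once and for all an $\epsilon\in\bigl(0,1-\mu/2\bigr)$ and apply Lemma~\ref{lem:nearzinfty}, obtaining $|\psi^{(\ell)}(z)|^2 \lesssim C_{\epsilon,s}^2 (\ell!)^2\epsilon^{-2\ell} z^{2s-2\ell-1} e^{-2(1-\epsilon)z}$. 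Multiplying by $z^{\beta+2\ell}e^{\gamma z/\sqrt\lambda}$, the powers of $z$ collapse to $z^{\beta+2s-1}$ and the exponential becomes $e^{(\gamma/\sqrt\lambda-2(1-\epsilon))z}\le e^{(\mu-2(1-\epsilon))z}$; by the choice of $\epsilon$ this exponent is strictly negative and the remaining integral $\int_1^\infty z^{\beta+2s-1}e^{(\mu-2(1-\epsilon))z}\diff z$ is a finite constant independent of $\ell$ and $\lambda$. Setting $\kappa:=1/\epsilon>1$ delivers the claimed factor $\kappa^{2\ell}(\ell!)^2$.

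The only nontrivial point is the selection of $\epsilon$: it must be strictly less than $1-\mu/2$ so that the exponential weight $e^{\gamma z/\sqrt\lambda}$ is uniformly absorbed by the decay $e^{-2(1-\epsilon)z}$ of $|\psi^{(\ell)}|^2$, whereas it cannot be too small since $\kappa=1/\epsilon$ appears in the final estimate. The hypothesis $\gamma<2\sqrt{\lambda_1}$ makes the interval $(0,1-\mu/2)$ nonempty, so such $\epsilon$ exists; any fixed choice then works simultaneously for all $\lambda\ge\lambda_1$ and all $\ell$, giving $\kappa>1$ depending only on $\gamma$ and $\lambda_1$. The constants hidden in $\lesssim$ also depend on $s,\beta,\delta,\epsilon$ but not on $\ell$ or $\lambda$, which is the required uniformity.
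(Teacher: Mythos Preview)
Your proof is correct and follows essentially the same approach as the paper: split the integral at $z=1$, use Lemma~\ref{lem:nearz0} on $(0,1)$ (where $\beta>-1-4s$ ensures convergence) and Lemma~\ref{lem:nearzinfty} on $(1,\infty)$ with $\epsilon$ chosen so that $\gamma/\sqrt{\lambda_1}-2(1-\epsilon)<0$, then set $\kappa=\epsilon^{-1}$. The paper's argument is identical up to notation (it writes $\hat\gamma:=\sup_{\lambda\ge\lambda_1}(\gamma/\sqrt{\lambda}-2(1-\epsilon))$ where you write $\mu-2(1-\epsilon)$).
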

\begin{proof}
We derive \eqref{eq:bound_on_Psi}. As a first step, we write $\Psi_\ell = \Psi_\ell(\beta,\gamma,\lambda)$ as follows:
\begin{equation}
\label{eq:bound_on_Psi_step1}
 \Psi_\ell = \int_{ 0 }^{ 1 } z^{\beta + 2\ell}e^{\frac{\gamma z}{\sqrt{\lambda}}} \left| \frac{\diff^\ell}{\diff z^\ell} \psi(z) \right|^2 \diff z +  \int_{ 1 }^{ \infty } z^{\beta + 2\ell}e^{\frac{\gamma z}{\sqrt{\lambda}}} \left| \frac{\diff^\ell}{\diff z^\ell} \psi(z) \right|^2 \diff z = \textrm{I} + \textrm{II},
\end{equation}
and estimate each term separately. 

We start by bounding $\textrm{I}$. To accomplish this task we notice that, 
since $0 \leq \gamma < 2 \sqrt{\lambda_1}$ and $\lambda \geq \lambda_1$ we have that
\[
 \sup_{z \in (0,1)}e^{\frac{\gamma z}{\sqrt{\lambda}}} < \sup_{z \in (0,1) } e^{2 z} \le e^{2}.
\]
Consequently, an application of the results of Lemma \ref{lem:nearz0} yields
\[
 \textrm{I}= \int_{ 0 }^{ 1 } z^{\beta + 2\ell}e^{\frac{\gamma
     z}{\sqrt{\lambda}}} \left| \frac{\diff^\ell}{\diff z^\ell}
 \psi(z) \right|^2 \diff z \lesssim d_s^2 (\ell!)^2 
  \int_0^1 z^{\beta+2\ell+2(2s-\ell)} \diff z 
 \lesssim d_s^2 (\ell!)^2,
\]
where last integral converges because $\beta > -1 - 4s$. 
Notice that the hidden constant blows up when $\beta \downarrow -1 - 4s$.

We now estimate the term $\textrm{II}$ in \eqref{eq:bound_on_Psi_step1}. 
To do this we utilize the estimate \eqref{eq:by_induction_exp} of Lemma \ref{lem:nearzinfty} 
as follows:
\[
\textrm{II} \leq C_{\epsilon}^2 c_s^2 (\ell!)^2 \epsilon^{-2\ell} 
\int_1^{\infty} z^{\beta + 2\ell} z^{2s-2\ell- 1} e^{\frac{\gamma z}{\sqrt{\lambda}}}  
                e^{-2(1-\epsilon)z} \diff z.
\]
Define
\[
\hat \gamma:= \sup_{\lambda \geq \lambda_1} \left( \frac{\gamma}{\sqrt{\lambda}} - 2(1-\epsilon) \right).
\]
Notice that, since 
$0 \leq \frac{\gamma}{\sqrt{\lambda_1}} < 2 $ 
by \eqref{eq:weight},
the parameter $\epsilon \in (0,1)$ 
can be selected such that $ \hat \gamma < 0$. 
Consequently
\[
\textrm{II} \lesssim C_{\epsilon}^2 c_s^2 (\ell!)^2 \epsilon^{-2\ell} 
\int_1^{\infty} z^{\beta + 2s-1}  e^{\hat \gamma z} \diff z 
\lesssim  C_{\epsilon}^2 c_s^2 
(\ell!)^2 \epsilon^{-2\ell}.
\]

Replacing the estimates for the terms $\textrm{I}$ and $\textrm{II}$ into 
\eqref{eq:bound_on_Psi_step1} and considering $\kappa = \epsilon^{-1} > 1$
we arrive at the desired estimate \eqref{eq:bound_on_Psi}.
To obtain the estimate \eqref{eq:bound_on_Phi} we decompose $\Phi$ 
as in \eqref{eq:bound_on_Psi_step1} and use that, as estimate \eqref{asympat0} 
shows, $\psi$ is bounded as $z \downarrow 0^+$ and decays exponentially 
to zero as $z \uparrow \infty$; see Lemma~\ref{lemma:bessel} \eqref{itemv} and \eqref{itemvi}. 
For brevity, we skip details.
\end{proof}

Now, on the basis of Lemma \ref{lem:bound_Psi_finite_interval}, 
we provide global regularity results for the $\alpha$-harmonic extension $\ue$ 
in weighted Sobolev spaces.

\begin{theorem}[global regularity of $\ue$]
\label{thm:glob_reg_ue}
Let $\ue \in \HL(y^{\alpha},\C)$ solve \eqref{alpha_harm_intro} with $s \in (0,1)$. 
Let $0 \leq \tilde{\nu} < s$ and $0 \leq \nu < 1+s$. 
Then there exists $\kappa > 1$ such that the following holds
for all $\ell \in \mathbb{N}_0$
with the weight $w_{\beta,\gamma}$ given by \eqref{eq:weight}: 
\begin{align}
\label{eq:reg_y_l}
 \|\partial_{y}^{\ell+1} \ue \|_{L^2(\omega_{\alpha + 2 \ell -2\tilde{\nu},\gamma},\C)} 
& \lesssim \kappa^{\ell+1} (\ell+1)! \| f \|_{\mathbb{H}^{-s + \tilde{\nu}}(\Omega)}, 
\\
\label{eq:reg_x_grad_mu}
 \| \nabla_{x'} \partial_{y}^{\ell + 1} \ue \|_{L^2(\omega_{\alpha+2(\ell+1)-2\nu,\gamma},\C)} 
  & \lesssim \kappa^{\ell+1}(\ell+1)! \| f \|_{\mathbb{H}^{-s+\nu}(\Omega)},
\\
\label{eq:reg_x_mu}
 \| \calL_{x'} \partial_{y}^{\ell + 1} \ue \|_{L^2(\omega_{\alpha+2(\ell+1)-2\nu,\gamma},\C)} 
&\lesssim \kappa^{\ell+1}(\ell+1)! \| f \|_{\mathbb{H}^{1-s+\nu}(\Omega)}. 
\end{align}
In all these inequalities, the implied
constants are independent of $\ell$, $\ue$ and $f$.
In addition, if  $0 \leq \nu' < 1-s$ then
\begin{align}
\label{eq:reg_x_Delta}
 \|  {\calL_{x'}} \ue \|_{L^2(\omega_{\alpha-2\nu',\gamma},\C)} 
     &\lesssim \| f \|_{\mathbb{H}^{1-s+\nu'}(\Omega)}, \\
\label{eq:reg_x}
 \|  \nabla_{x'} \ue \|_{L^2(\omega_{\alpha-2\nu',\gamma},\C)}  
     &\lesssim \| f \|_{\mathbb{H}^{-s+\nu'}(\Omega)}, \\
\label{eq:reg_L2}
 \|  \ue \|_{L^2(\omega_{\alpha-2\nu',\gamma},\C)}  
     &\lesssim \| f \|_{\mathbb{H}^{-1-s+\nu'}(\Omega)}, 
\end{align}
where the constant implied in $\lesssim$ is independent of $\ue$ and $f$.
\end{theorem}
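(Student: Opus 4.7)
The plan is to exploit the separation-of-variables representation \eqref{eq:SepVar} together with the scaling identity $\psi_k(y)=\psi(\sqrt{\lambda_k}\,y)$. Differentiating in $y$ gives $\partial_y^{\ell+1}\psi_k(y)=\lambda_k^{(\ell+1)/2}\psi^{(\ell+1)}(\sqrt{\lambda_k}\,y)$, while $\calL_{x'}$ acts as multiplication by $\lambda_k$ on the $k$-th modal component. Substituting \eqref{eq:SepVar} into each of \eqref{eq:reg_y_l}--\eqref{eq:reg_L2} and using $L^2(\Omega)$-orthonormality of $\{\varphi_k\}$ when no $x'$-derivative is present (and $\blfa{\Omega}$-orthogonality, which is equivalent to the plain gradient inner product up to the ellipticity constants of $A$, when a spatial gradient or $\calL_{x'}$ appears), reduces each inequality to controlling a single sum of the form
\begin{equation*}
\sum_{k\ge 1} u_k^2\,\lambda_k^{p}\int_0^{\infty} y^{q}e^{\gamma y}\bigl|\partial_y^{\ell+1}\psi_k(y)\bigr|^2\,\diff y,
\end{equation*}
or the analogous sum with $\psi_k$ in place of $\partial_y^{\ell+1}\psi_k$ for the three $\ell$-free bounds \eqref{eq:reg_x_Delta}--\eqref{eq:reg_L2}. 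The integer $p\in\{0,1,2\}$ and the weight exponent $q$ are read off transparently from the particular estimate under consideration.

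The substitution $z=\sqrt{\lambda_k}\,y$ then converts the inner integral into an explicit power of $\lambda_k$ times $\Psi_{\ell+1}(\beta,\gamma,\lambda_k)$ or $\Phi(\delta,\gamma,\lambda_k)$, with $\beta=q-2(\ell+1)$ and $\delta=q$. A routine calculation verifies that the admissibility conditions $\beta>-1-4s$ and $\delta>-1$ of Lemma~\ref{lem:bound_Psi_finite_interval} coincide exactly with the hypotheses $\tilde\nu<s$ for \eqref{eq:reg_y_l}, $\nu<1+s$ for \eqref{eq:reg_x_grad_mu}--\eqref{eq:reg_x_mu}, and $\nu'<1-s$ for \eqref{eq:reg_x_Delta}--\eqref{eq:reg_L2}; the constraint $0\le\gamma<2\sqrt{\lambda_1}$ built into the weight \eqref{eq:weight} is what ensures that the $\Psi_{\ell+1}$ and $\Phi$ bounds hold uniformly in $\lambda_k\ge\lambda_1$.

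After invoking Lemma~\ref{lem:bound_Psi_finite_interval}, the inner integrals are majorized by $\kappa^{2(\ell+1)}((\ell+1)!)^2$ (or by an absolute constant in the $\ell$-free case), and it remains to collect powers of $\lambda_k$. Using $u_k^2=\lambda_k^{-2s}f_k^2$, the surviving residual sum takes the form $\sum_{k\ge 1} f_k^2\lambda_k^{\sigma}$, and a short arithmetic of exponents matches $\sigma$ to the precise fractional regularity index of $f$ appearing on the right-hand side of each of the six inequalities. Taking square roots closes the argument.

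The main obstacle, and indeed the bulk of the work, is the bookkeeping: powers of $\lambda_k$ produced by $y$-differentiation of $\psi_k$, by the change of variables $z=\sqrt{\lambda_k}\,y$, by the action of $\calL_{x'}$ or by $\blfa{\Omega}(\varphi_k,\varphi_k)=\lambda_k$, and finally by $u_k=\lambda_k^{-s}f_k$ must conspire correctly in every one of the six cases. Termwise differentiation of \eqref{eq:SepVar}, which is used tacitly above, is justified \emph{a posteriori} since the same estimates show that the partial sums are Cauchy in the weighted Bochner norm on the left-hand side.
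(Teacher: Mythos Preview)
Your proposal is correct and follows essentially the same route as the paper: expand via the representation \eqref{eq:SepVar}, use orthogonality of the $\varphi_k$ (and $\blfa{\Omega}$-orthogonality for the gradient term, which you correctly flag), rescale $z=\sqrt{\lambda_k}\,y$ to reduce each inner $y$-integral to $\Psi_{\ell+1}$ or $\Phi$, and invoke Lemma~\ref{lem:bound_Psi_finite_interval} after checking the parameter constraints match the hypotheses on $\tilde\nu$, $\nu$, $\nu'$. Your remark on justifying termwise differentiation \emph{a posteriori} is a nice touch that the paper leaves implicit.
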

\begin{proof}
We follow \cite[Theorem 2.7]{NOS} and thus invoke the representation formula \eqref{eq:SepVar} 
to arrive at
\[
\|\partial_{y}^{\ell+1} \ue \|^2_{L^2(\omega_{\alpha + 2 \ell -2\sigma,\gamma},\C)}  
= \sum_{k=1}^{\infty} f_k^2 \lambda_k^{-2s}
  \int_0^\infty y^{\alpha + 2 \ell -2\sigma}e^{\gamma y}
\left|\frac{\diff^{\ell+1} }{\diff y^{\ell+1}} \psi_k (y) \right|^2 \diff y\;.
\]
We introduce the change of variable $z = \sqrt{\lambda_k} y$ and
recall that $\psi(z) = c_s z^s K_s(z)$ and
$\psi_k(y)=\psi(\sqrt{\lambda_k} y)$ as well as 
the definition of $\Psi_{\ell}$ given as in \eqref{eq:defofbigPsi}, to obtain that
\begin{multline*}
\|\partial_{y}^{\ell+1} \ue \|^2_{L^2(\omega_{\alpha + 2 \ell -2\sigma,\gamma},\C)}  
= 
\sum_{k=1}^{\infty} f_k^2 \lambda_k^{-2s + (\ell + 1) - 
\left(\tfrac{\alpha+2\ell-2\sigma}{2}\right) -\tfrac12} 
\Psi_{\ell+1}(\alpha-2\sigma-2,\gamma,\lambda_k)
\\
\lesssim (\ell+1)!^2 \kappa^{2(\ell+1)}\sum_{k=1}^{\infty} f_k^2 \lambda_k^{\sigma-s} 
= 
(\ell+1)!^2 \kappa^{2(\ell+1)}\| f\|^2_{\mathbb{H}^{-s+\sigma}(\Omega)},
\end{multline*}
where the last inequality follows from 
the estimate \eqref{eq:bound_on_Psi} with $\beta = \alpha - 2\sigma -2 = 1-2s -2\sigma -2 > -1-4s$. 
%

We now derive \eqref{eq:reg_x_mu}; the proof of the estimate
\eqref{eq:reg_x_grad_mu} follows by using similar arguments. 
As before, we arrive at
\begin{align*}
  & \|  \calL_{x'} \partial_y^{\ell+1} \ue \|^2_{L^2(\omega_{\alpha+2(\ell+1)-2\nu,\gamma},\C)} 
  \\
  &= \sum_{k=1}^{\infty} f_k^2 \lambda_k^{2(1-s)} \int_0^\infty y^{\alpha+2(\ell+1)-2\nu} 
 e^{\gamma y}\left|\frac{\diff^{\ell+1} }{\diff y^{\ell+1}} \psi_k (y) \right|^2 \diff y
  \\
  &= \sum_{k=1}^\infty f_k^2 \lambda_k^{2(1-s) + (\ell+1) - \left( \tfrac{\alpha+2(\ell+1)-2\nu}{2} \right) -\tfrac12} \Psi_{\ell+1}(\alpha-2\nu,\gamma,\lambda_k),
\end{align*}
where we applied again the change of variable $z = \sqrt{\lambda_k} y$ 
and used the definition of $\Psi_{\ell}$ given by \eqref{eq:defofbigPsi}. 
We now notice that $\alpha-2\nu > 1-2s - 4 -2s = -1-4s$. 
Thus an application of the estimate \eqref{eq:bound_on_Psi} with 
$\beta = \alpha - 2\nu$ reveals that
\[
   \|  \mathcal{L}_{x'}  \partial_y^{\ell+1} \ue \|^2_{L^2(\omega_{\alpha+2(\ell+1)-2\nu,\gamma},\C)} 
   \lesssim \kappa^{2(\ell+1)} (\ell+1 )!^2 \| f \|^2_{\mathbb{H}^{1-s +\nu}(\Omega)}.
\]
This yields \eqref{eq:reg_x_mu}. 

The proofs of \eqref{eq:reg_x_Delta}, \eqref{eq:reg_x}, \eqref{eq:reg_L2} 
rely on similar arguments using that $\nu' < 1-s$ implies 
$\delta:= \alpha - 2 \nu' = 1-2s - 2 \nu' > 1-2s-2(1-s) = -1$, 
and thus, as a consequence of \eqref{eq:bound_on_Phi}, 
that $\Phi(\delta,\gamma,\lambda) \lesssim 1$. 
This concludes the proof.
\end{proof}

\section{$h$-FE discretization in $\Omega$}\label{S:FEMy}
%
We now begin with the discretization of \eqref{eq:ue-variational}.
The structure of this section is as follows:
in Section~\ref{S:NtFESpc}, we introduce the FE approximation 
in $\Omega$ and fix notation on Finite Element spaces.
Section \ref{S:FEdiscretization} introduces the FE discretization
in $\calC$ in abstract form.
Section \ref{S:ErrSplt} next addresses a basic decomposition of
the FE discretization error which decomposes the FE discretization
error into two parts: a semidiscretization error with respect to $x'\in \Omega$,
and a corresponding error with respect to $y\in (0,\Y)$, where $0<\Y<\infty$
denotes a truncation parameter of the cylinder $(0,\infty)$.
Section \ref{S:hFEErrAn} then addresses two first order tensor product
FEMs in $\calC$.
The first one, as in \cite{NOS}, is a full tensor product FEM and for it we show 
the first order rate of convergence in $\Omega$, but at superlinear complexity
in terms of the number $\calN_\Omega$ of degrees of freedom in $\Omega$.
To reduce the complexity, we propose the second, novel approach:
by sparse tensor product $P_1$ discretization of the extended problem in $\C$,
we show the same convergence rate, but with (essentially) linear complexity
in terms of $\calN_\Omega$
requiring only marginally more regularity of the data $f$ in $\Omega$.
Section~\ref{S:hpFEM} addresses the use of an $hp$-FEM in 
the extended variable $y$, combined with a $P_1$-FEM in $\Omega$.
\subsection{Notation and FE spaces}
\label{S:NtFESpc}
For a truncation parameter $\Y>0$ (which is fixed, and which will be selected ahead),
we denote by $\calG^M$ a generic partition of $[0,\Y]$ into $M$ intervals. 
In particular, the following two types of partitions, that are refined 
towards $y = 0$, will be essential for our purposes:
\begin{enumerate}[$\bullet$]
\item 
\emph{Graded} meshes $\calG^{k}_{gr,\eta}$. 
Here $k$ indicates the mesh size near $y = 1$ and $\eta$ characterizes the mesh
grading towards $y = 0$; see Section~\ref{S:p1y} for details.

\item \emph{Geometric} meshes $\calG^{M}_{geo,\sigma}$. 
This mesh has $M$ elements and $\sigma \in (0,1)$ is the subdivision ratio; 
see Section~\ref{S:hp-interpolant} for details.
\end{enumerate} 

Given a mesh $\calG^M = \{ I_m \}_{m=1}^M$ in $[0,\Y]$, 
where $I_m = [y_{m-1},y_m]$, $y_0 = 0$ and $y_M = \Y$, we associate to $\calG^M$
a \emph{polynomial degree distribution} $\bmr = (r_1,r_2,\dots,r_M) \in \bbN^M$.
With these ingredients at hand we define the finite element space
\[
S^\bmr((0,\Y),\calG^M) 
= 
\left \{ v_M \in C[0,\Y]: v_M|_{I_m} \in \mathbb{P}_{r_m}(I_m), I_m \in \calG^M, m =1, \dots, M \right \}.
\]
We also define the subspace of $S^\bmr((0,\Y),\calG^M)$ of functions that vanish at $y=\Y$:
\[
S_{\{\Y\}}^\bmr((0,\Y),\calG^M) = \left \{ v_M \in S^\bmr((0,\Y),\calG^M): v_M(\Y) = 0 \right \}.
\]
In the particular case that $r_i=r$  for $i=1,\ldots,M$, 
we write $S^r((0,\Y),\calG^M)$ or $S_{\{\Y\}}^r((0,\Y),\calG^M)$ as appropriate.
In $\Omega$, we consider Lagrangian FEM of polynomial degree $q\geq 1$
based on shape-regular, simplicial triangulations denoted by $\calT$.
Denote by 
$h(\calT) = \max \{\diam(K) : K \in \calT \}$ 
the mesh width of $\calT$. We thus introduce 
\[
S^q_0(\Omega,\calT)
= 
\left \{ v_h \in C(\bar \Omega): v_h|_{K} \in \mathbb{P}_{q}(K) \quad 
\forall K \in \calT, \ v_h|_{\partial \Omega} = 0 \right \}.
\]
In what follows we will also consider nested sequences $\{ \calT^\ell \}_{\ell \geq 0}$ 
of triangulations of $\Omega$ that are generated by bisection--tree 
refinement of a coarse, regular initial triangulation $\calT^0$ of $\Omega$. 
We denote by $h_\ell =\max\{\diam(K) : K \in \calT^\ell \}$ the mesh width of $\calT^\ell$.

By $\Pi_{x'}^q: H^1_0(\Omega) \to S^q_0(\Omega,\calT)$, 
we denote a FE quasi--interpolation operator defined on 
$L^2(\Omega)$ that, when restricted to $H^1_0(\Omega)$,
preserves homogeneous Dirichlet boundary conditions.
We assume that $\Pi_{x'}^q$ has optimal asymptotic approximation 
properties in $L^2(\Omega)$ and $H^1(\Omega)$ on regular, locally refined, and nested
bisection--tree mesh sequences 
$\{ \calT^\ell \}_{\ell \geq 0}$ in $\Omega$.
In addition, we assume that $\Pi_{x'}^q$ is concurrently stable in $L^2(\Omega)$ and $H^1(\Omega)$.
In the particular case that $q\leq12$ we will set $\Pi_{x'}^q$ to be the $L^2(\Omega)$ 
projection onto $S^q_0(\Omega,\calT)$.
We refer, in particular, to \cite{GspzHeineSiebert2016} for a verification 
of the requisite stability and approximation properties over nested bisection--tree meshes.

We define the finite--dimensional \emph{tensor product space}
\begin{equation}\label{eq:TPFE}
\V^{q,\bmr}_{h,M}(\calT,\calG^M) 
:= 
S^q_0(\Omega,\calT) \otimes S_{\{ \Y\}}^\bmr((0,\Y),\calG^M)
\subset \HL(y^{\alpha},\C)\;,
\end{equation}
and write $\V_{h,M}$ if the arguments are clear from the context.
In the ensuing error analysis, we also require 
semidiscretizations which are based on the following (infinite--dimensional) 
Hilbertian tensor product spaces
\begin{equation}\label{eq:xySemiDis}
\begin{array}{l} 
\displaystyle
\V^q_{h}(\C_\Y) := S^q_0(\Omega,\calT_h) \otimes \HL(y^\alpha, (0,\Y)) \subset \HL(y^{\alpha},\C)\;,
\\ 
\displaystyle
\V^\bmr_{M}(\C_\Y) := H^1_0(\Omega)\otimes S_{\{\Y\}}^\bmr((0,\Y),\calG^M) \subset \HL(y^{\alpha},\C)\;.
\end{array}
\end{equation}
Both of them are closed subspaces of $\HL(y^{\alpha},\C)$, so that
Galerkin projections with respect to the inner product 
given by the bilinear form $\blfa{\C_\Y}$ in \eqref{eq:blf-A-truncated}
are well defined. We denote these projections by $G_h^q$ and $G_M^\bmr$, respectively.
To the space $\V^{q,\bmr}_{h,M}(\calT,\calG^M)$, defined in \eqref{eq:TPFE}, 
we can also associate a Galerkin projection with respect to $\blfa{\C_\Y}$. 
We remark that this projector is the composition of the semidiscrete projections:
\begin{equation}\label{eq:TPOp}
G^{q,\bmr}_{h,M} 
= G^q_{h} \circ G^\bmr_{M}
= G^\bmr_{M} \circ G^q_{h}: \;\HL(y^\alpha,\C) \to \V^{q,\bmr}_{h,M}(\calT,\calG^M)\;.
\end{equation}
%
\subsection{FE discretization and quasioptimality}
\label{S:FEdiscretization}
The \emph{FE approximation} $\ue_{h,M}$ is defined as $\ue_{h,M} = G_{h,M}^{q,\bmr} \ue \in \V_{h,M}$, 
\ie it satisfies
\begin{equation}\label{eq:alpha_weak_UhM}
 \blfa{{\C_\Y}}(\ue_{h,M} , \phi)
  = d_s \langle f, \tr \phi\rangle \quad \forall \phi \in \V_{h,M}\;.
\end{equation}
Coercivity of $\blfa{\C_\Y}$ immediately implies existence and uniqueness of $\ue_{h,M}$. 
In addition, Galerkin orthogonality gives quasioptimality of $\ue_{h,M}$. 
More precisely, as in \cite[Section 4]{NOS}, we have the following result.

\begin{lemma}[C\'ea and truncation]
\label{lem:GalErr}
Let $\ue$ be the solution to problem \eqref{eq:ue-variational} 
and let 
$\ue_{h,M} = G_{h,M}^{q,\bmr} \ue$ its finite element approximation that solves \eqref{eq:alpha_weak_UhM}. 
Then we have 
\begin{equation}
\label{eq:GalErr}
\begin{aligned}
\| \nabla(\ue - \ue_{h,M}) \|_{L^2(y^{\alpha},\calC)} 
& \lesssim 
\min_{v_{h,M} \in \V_{h,M}}
\|  \nabla(\ue - v_{h,M}) \|_{L^2(y^{\alpha},\C_\Y)}
\\
& + 
\| \nabla \ue \|_{L^2(y^{\alpha},\calC\backslash \calC_\Y)}\;,
\end{aligned}
\end{equation}
where the hidden constant does not depend on $\V_{h,M}$.
\end{lemma}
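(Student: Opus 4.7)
The plan is to reduce the assertion to the textbook Céa estimate on the truncated cylinder $\C_\Y$, with a trivial additive correction that accounts for the tail of $\nabla\ue$ on $\C\setminus\C_\Y$. The starting observation is that every element of $\V_{h,M}$ vanishes at $y=\Y$ (by definition of the factor space $S^\bmr_{\{\Y\}}$) and on $\partial_L\C_\Y$, so its extension by zero to all of $\C$ lies in $\HL(y^{\alpha},\C)$; indeed this inclusion is already recorded in \eqref{eq:TPFE}. Testing the continuous variational identity \eqref{eq:ue-variational} against such an extended $\phi\in\V_{h,M}$ and observing that the integrand vanishes on $\C\setminus\C_\Y$ yields $\blfa{\C_\Y}(\ue,\phi)=d_s\langle f,\tr \phi\rangle$; subtracting the discrete problem \eqref{eq:alpha_weak_UhM} produces the Galerkin orthogonality
\begin{equation*}
\blfa{\C_\Y}(\ue-\ue_{h,M},\phi)=0\qquad\forall\,\phi\in\V_{h,M}.
\end{equation*}

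Next I would invoke the standard Céa argument, using coercivity and continuity of $\blfa{\C_\Y}(\cdot,\cdot)$ with respect to $\|\nabla(\cdot)\|_{L^2(y^{\alpha},\C_\Y)}$. Both properties are inherited from \eqref{eq:norm-C} and \eqref{eq:blf-A-truncated}, the coercivity on $\C_\Y$ following from the same weighted Poincaré argument that produces \eqref{Poincare_ineq}. This delivers
\begin{equation*}
\|\nabla(\ue-\ue_{h,M})\|_{L^2(y^{\alpha},\C_\Y)}\;\lesssim\;\|\nabla(\ue-v_{h,M})\|_{L^2(y^{\alpha},\C_\Y)}\qquad\forall\,v_{h,M}\in\V_{h,M},
\end{equation*}
and minimizing the right-hand side over $v_{h,M}\in\V_{h,M}$ gives the best-approximation bound on $\C_\Y$.

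Finally, since $\ue_{h,M}\equiv 0$ on $\C\setminus\C_\Y$, the total error decomposes orthogonally as
\begin{equation*}
\|\nabla(\ue-\ue_{h,M})\|^2_{L^2(y^{\alpha},\C)}
\;=\;
\|\nabla(\ue-\ue_{h,M})\|^2_{L^2(y^{\alpha},\C_\Y)}+\|\nabla\ue\|^2_{L^2(y^{\alpha},\C\setminus\C_\Y)}.
\end{equation*}
Applying the elementary bound $\sqrt{a^2+b^2}\le a+b$ for $a,b\geq 0$ and inserting the Céa estimate established in the previous step yields \eqref{eq:GalErr}. The argument is entirely routine and I do not anticipate any real obstacle; the only conceptual point worth flagging is that $\ue$ itself does not vanish at $y=\Y$, which is precisely why Galerkin orthogonality holds on $\C_\Y$ rather than on $\C$ and why the tail contribution must be carried along additively rather than absorbed into the best-approximation term.
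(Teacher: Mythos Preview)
Your proposal is correct and follows exactly the Galerkin-orthogonality/C\'ea route that the paper indicates (the paper does not spell out a proof but merely states that ``Galerkin orthogonality gives quasioptimality'' and refers to \cite[Section~4]{NOS}). The only minor remark is that the Poincar\'e inequality you invoke on $\C_\Y$ is really the $x'$-direction Poincar\'e coming from the vanishing on $\partial_L\C_\Y$, which is indeed the same mechanism behind \eqref{Poincare_ineq}; with that, continuity of $\blfa{\C_\Y}$ in the gradient seminorm holds for $\ue-\ue_{h,M}$ and $\ue-v_{h,M}$ as you claim.
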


As already noted in \cite[Prop.~{3.1}]{NOS}, 
the second term on the right hand side of \eqref{eq:GalErr} is exponentially small in $\Y$. 
More precisely, using \eqref{eq:reg_y_l} and \eqref{eq:reg_x}
we get, with the selection $\gamma < 2 \sqrt{\lambda_1}$, that
\begin{equation} \label{eq:ExpDec}
\| \nabla \ue \|_{L^2(y^\alpha, \C\setminus\C_\Y)} 
\lesssim 
\exp(-\gamma \Y/2) \| f \|_{{\mathbb H}^{-s}(\Omega)}. 
\end{equation}
%
\subsection{FE error splitting}
\label{S:ErrSplt}
As \eqref{eq:ExpDec} shows, the second term on 
the right hand side of of \eqref{eq:GalErr} decays exponentially in $\Y$. 
Thus, we now concentrate on estimating the first one. 

As in \cite{NOS,MPSV17}, we separate the errors incurred by discretizations 
with respect to $x'$ and $y$ as follows.
\begin{lemma}[dimensional error splitting]
\label{L:ErrSplt}
Let $\ue$ be the solution to problem \eqref{eq:ue-variational} and let $\ue_{h,M}$ denote 
its approximation defined as the solution to \eqref{eq:alpha_weak_UhM}.
Assume that on the sequence $\{\calT^\ell \}_{\ell \geq 1}$ of regular, 
simplicial triangulations of $\Omega$ the quasi-interpolation operator 
$\Pi^q_{x'}$ is concurrently uniformly stable on $L^2(\Omega)$ and $H^1(\Omega)$.
Let $\pi^{\bmr}_y:H^1(y^\alpha, (0,\Y)) \rightarrow S^\bmr_{\{\Y\}}((0,\Y),\calG^M)$ 
be a linear projector.
Then 
\begin{equation}\label{eq:ErrSplt}
  \begin{aligned}
  \min_{ v_{h,M} \in \V_{h,M}} \|  \nabla(\ue - v_{h,M}) \|_{L^2(y^{\alpha},\calC_\Y)} 
  &\lesssim 
  \| \nabla( \ue - \Pi^q_{x'} \ue ) \|_{L^2(y^{\alpha},\C_\Y)} \\
  &+ \|  \nabla(\ue - \pi^\bmr_y \ue) \|_{L^2(y^{\alpha},\C_\Y)} \;,
  \end{aligned}
\end{equation}
where the hidden constant does not depend on the dimension of $\V_{h,M}$.
\end{lemma}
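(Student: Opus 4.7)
The plan is to exhibit a natural tensor-product candidate in $\V_{h,M}$ and then split the error by a triangle inequality. I will take
\[
v_{h,M} := \Pi^q_{x'}\bigl(\pi^{\bmr}_y \ue\bigr),
\]
where $\Pi^q_{x'}$ acts pointwise in $y$ on the $x'$-variable and $\pi^{\bmr}_y$ acts pointwise in $x'$ on the $y$-variable. Since $\Pi^q_{x'}$ preserves homogeneous Dirichlet data on $\partial\Omega$ and $\pi^{\bmr}_y$ maps into $S^{\bmr}_{\{\Y\}}((0,\Y),\calG^M)$, the function $v_{h,M}$ lies in the tensor product space $\V_{h,M}$. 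Because the two operators act on disjoint variable groups, they commute on tensor-product functions and, by density and the assumed concurrent $L^2$/$H^1$ stability of $\Pi^q_{x'}$, on all of $\HL(y^\alpha,\C_\Y)$.

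First, I write $\ue - v_{h,M} = (\ue - \Pi^q_{x'}\ue) + \Pi^q_{x'}(\ue - \pi^{\bmr}_y \ue)$ and apply the triangle inequality in $L^2(y^\alpha,\C_\Y)$ to its gradient. The first summand already produces the first term on the right-hand side of \eqref{eq:ErrSplt}. It remains to control $\|\nabla \Pi^q_{x'}(\ue - \pi^{\bmr}_y\ue)\|_{L^2(y^\alpha,\C_\Y)}$ by the second term on the right-hand side of \eqref{eq:ErrSplt}.

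Set $w := \ue - \pi^{\bmr}_y \ue$. Because $\Pi^q_{x'}$ is a linear operator acting only in $x'$, it commutes with $\partial_y$, so $\partial_y(\Pi^q_{x'} w) = \Pi^q_{x'}(\partial_y w)$. Combining this identity with the concurrent $L^2(\Omega)$ and $H^1(\Omega)$ stability of $\Pi^q_{x'}$ (uniform in $y$) gives, for a.e.\ $y \in (0,\Y)$,
\[
\|\nabla_{x'} \Pi^q_{x'} w(\cdot,y)\|_{L^2(\Omega)}^2 + \|\partial_y \Pi^q_{x'} w(\cdot,y)\|_{L^2(\Omega)}^2
\lesssim
\|\nabla_{x'} w(\cdot,y)\|_{L^2(\Omega)}^2 + \|\partial_y w(\cdot,y)\|_{L^2(\Omega)}^2.
\]
Multiplying by $y^\alpha$ and integrating over $(0,\Y)$ then yields, via Fubini,
\[
\|\nabla \Pi^q_{x'} w\|_{L^2(y^\alpha,\C_\Y)} \lesssim \|\nabla w\|_{L^2(y^\alpha,\C_\Y)},
\]
which is the desired bound. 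Taking the minimum over $\V_{h,M}$ on the left completes the splitting.

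The main technical obstacle is to justify rigorously the commutation $\partial_y \Pi^q_{x'} = \Pi^q_{x'}\partial_y$ on $\HL(y^\alpha,\C_\Y)$ and, concurrently, the pointwise-in-$y$ stability of $\Pi^q_{x'}$ with respect to the anisotropic weighted norm. I plan to handle this by first verifying the commutation and stability on the dense subspace $C^\infty_0(\Omega)\otimes C^\infty([0,\Y])$ (where everything is classical) and then extending by density, using that $y^\alpha \in A_2$ (so smooth functions are dense in the relevant weighted Sobolev space, as recalled in Section~\ref{sec:notation}) and that the stability constant of $\Pi^q_{x'}$ is independent of $y$.
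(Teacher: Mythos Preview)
Your proposal is correct and follows essentially the same approach as the paper: choose the tensor-product candidate $v_{h,M} = \Pi^q_{x'}\otimes \pi^{\bmr}_y \ue$ and apply the triangle inequality. The paper's proof is a one-line sketch, whereas you spell out explicitly how the concurrent $L^2(\Omega)$ and $H^1(\Omega)$ stability of $\Pi^q_{x'}$ is used (via the commutation $\partial_y\Pi^q_{x'}=\Pi^q_{x'}\partial_y$) to absorb the factor $\Pi^q_{x'}$ in the second summand; this is exactly the reason the stability hypothesis appears in the statement.
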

\begin{proof}
The desired estimate follows from the tensor-product structure 
of the finite element space defined in \eqref{eq:TPFE} and the triangle inequality, 
upon choosing in \eqref{eq:ErrSplt} the function $v_{h,M} := \Pi^q_{x'} \otimes \pi^\bmr_y \ue$. 
\end{proof}
\subsection{$h$-FE error analysis}
\label{S:hFEErrAn}
In the present subsection
we analyze convergence rates and complexity for 
two particular instances of the FE-space 
$\V^{q,\bmr}_{h,M}(\calT,\calG^M)$:

\begin{enumerate}[(a)]
  \item The case when $\bmr = (1,1,\dots,1)$ on a graded mesh $\calG^M$ and $q=1$. 
A particular instance of this was first introduced in \cite{NOS}; see Section \ref{sec:apriori}. 
Generalizing the results of \cite{NOS,MPSV17}, we allow 
$\Omega \subset \bbR^2$ to be a polygon with finitely many straight sides and corners $\{ \bmc \}$.
This will mandate the use of a sequence of nested triangulations 
$\{ \calT^\ell \}_{\ell \geq 1}$ of the domain $\Omega$ 
with, in general, local refinement towards the corners $\bmc\in \partial\Omega$.
   
\item 
The case $\bmr = (1,1,\ldots,1)$ on a nested sequence 
$\{ \calG^{\ell'} \}_{\ell' \geq 1}$ of graded meshes in $(0,\Y)$. 
At the same time, we also consider multilevel
approximations in $\Omega$ on a sequence $\{ \calT^\ell \}_{\ell \geq 1}$ 
of nested triangulations with appropriate corner refinement in $\Omega$,
a particular instance being the so-called bisection--tree refinements.
\end{enumerate}
   
In all cases, we bound the first term on the right hand side of \eqref{eq:GalErr}.
\subsubsection{$P_1$-FEM in $\Omega$ with mesh refinement at $\bmc$}
\label{S:P1FEM}
In a bounded polygon $\Omega \subset \bbR^2$ with straight sides
and corners $\bmc$ we consider the Dirichlet problem 
\begin{equation}\label{eq:DirLw}
\calL w = g \mbox{ in } \Omega\;,\quad w = 0 \mbox{ on } \partial \Omega \;,
\end{equation}
for $g \in H^{-1}(\Omega)$.
It is immediate that problem \eqref{eq:DirLw} has a unique solution $w\in H^1_0(\Omega)$. 
However, in general the solution $w$ does not belong to $H^2(\Omega)$. 
Under additional regularity assumptions on $A$ and $c$, 
it rather belongs to weighted
Sobolev spaces of Kondrat'ev type in $\Omega$ which we now define.

For a finite set $\{ \bmc \}$ of corners of $\Omega$ 
and $x\in \Omega$ we define $\Phi(x) = \prod_{\bmc} |x-\bmc|$.
To follow standard notation, for $0 \leq \beta \in \R$, 
we set $L^2_\beta(\Omega)=L^2(\Phi^{2\beta},\Omega)$.
We also define the space $H^2_\beta(\Omega)$ as the closure of $H^2(\Omega) \cap H^1_0(\Omega)$ 
with respect to the norm
\begin{equation}\label{eq:H2}
\| w \|_{H^2_\beta(\Omega)} = \| w \|_{H^1(\Omega)} + \|  D^2 w \|_{L^2(\Phi^{2\beta},\Omega)} 
\;.
\end{equation}

With this setting at hand, we present the following 
result on regularity shift in weighted Sobolev spaces for the solution of problem \eqref{eq:DirLw}.
\begin{proposition}[weighted regularity estimate]
\label{prop:H2Regw}
Let $A \in W^{1,\infty}(\Omega,\GL(\R^2))$ be uniformly positive definite, 
$c\in W^{1,\infty}(\Omega,\R)$ and $g\in L^2_\beta(\Omega)$. 
Then, for every polygon $\Omega\subset {\mathbb R}^2$, 
there exists $\beta \geq 0$ such that 
the solution $w$ of \eqref{eq:DirLw} belongs to $H^2_\beta(\Omega)$ and 
\begin{equation}\label{eq:wgtapriori}
\| w \|_{H^2_\beta(\Omega)} \lesssim \| \calL w \|_{L^2_\beta(\Omega)} = \| g \|_{L^2_\beta(\Omega)}
\;,
\end{equation}
where the hidden constant is independent of $g$.
\end{proposition}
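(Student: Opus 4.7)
The plan is to combine classical $H^2$ interior and boundary regularity in the smooth part of $\partial\Omega$ with the Kondrat'ev--Maz'ya--Rossmann theory of corner singularities for elliptic boundary value problems in weighted Sobolev spaces. I introduce a smooth partition of unity $\{\chi_0\}\cup\{\chi_{\bmc}\}_{\bmc}$ subordinate to an open cover of $\bar\Omega$ such that each $\chi_{\bmc}$ is supported in a small open neighborhood $U_{\bmc}$ of the corner $\bmc$ containing no other corner, while $\chi_0$ is supported away from all corners. Writing $w=\chi_0 w+\sum_{\bmc}\chi_{\bmc} w$, I bound each contribution separately.

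For the smooth piece $w_0:=\chi_0 w$, a direct computation gives $\calL w_0=\chi_0 g+[\calL,\chi_0]w$, where the commutator involves only values and first derivatives of $w$ and belongs to $L^2(\Omega)$ thanks to $A,c\in W^{1,\infty}(\Omega)$. On an open neighborhood of $\mathrm{supp}(\chi_0)$ the boundary of $\Omega$ consists of disjoint smooth (straight) arcs, so classical elliptic regularity yields $w_0\in H^2(\Omega)$ with $\|w_0\|_{H^2(\Omega)}\lesssim \|g\|_{L^2(\Omega)}+\|w\|_{H^1(\Omega)}$. Since $\Phi^{2\beta}$ is bounded on $\mathrm{supp}(\chi_0)$, the same bound carries over to the weighted norm $\|\cdot\|_{H^2_\beta(\Omega)}$ for any $\beta\geq 0$.

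For each corner piece $w_{\bmc}:=\chi_{\bmc}w$, I freeze the coefficients at $\bmc$ and consider the constant--coefficient model problem
\begin{equation*}
\calL_{\bmc} v:=-\DIV(A(\bmc)\GRAD v)=g_{\bmc}
\end{equation*}
posed on the infinite sector $K_{\bmc}\subset\bbR^2$ of opening angle $\omega_{\bmc}$ with homogeneous Dirichlet data on its two edges, obtained from the local geometry near $\bmc$. An affine change of variables that brings $A(\bmc)$ to the identity reduces $\calL_{\bmc}$ to the (transformed) Laplacian, and Kondrat'ev's theorem then yields that, provided the index $\beta$ avoids the discrete spectrum of the associated Mellin operator pencil on the arc (for the Laplacian, the set $\{k\pi/\omega_{\bmc}:k\in\mathbb{N}\}$), the operator $\calL_{\bmc}$ is an isomorphism $H^2_\beta(K_{\bmc})\cap H^1_0(K_{\bmc})\to L^2_\beta(K_{\bmc})$. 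Pulling this back to $U_{\bmc}\cap\Omega$ via a cutoff argument and treating $(\calL-\calL_{\bmc})w_{\bmc}$ and the commutator $[\calL,\chi_{\bmc}]w$ as small perturbations--which is possible since $A\in W^{1,\infty}$ implies $\|A(x)-A(\bmc)\|\lesssim|x-\bmc|$ on $U_{\bmc}$ and the radius of $U_{\bmc}$ can be chosen small enough to absorb these terms into the principal part--yields a local bound $\|w_{\bmc}\|_{H^2_\beta(\Omega)}\lesssim\|g\|_{L^2_\beta(\Omega)}+\|w\|_{H^1(\Omega)}$. Summing and absorbing the $H^1$ term via the coercivity estimate $\|w\|_{H^1(\Omega)}\lesssim\|g\|_{H^{-1}(\Omega)}\lesssim\|g\|_{L^2_\beta(\Omega)}$ (valid for $0\leq\beta<1$ by a Hardy inequality on each corner neighborhood) delivers \eqref{eq:wgtapriori}.

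The main obstacle is the selection of the admissible weight index $\beta$: it must avoid simultaneously the discrete set of critical exponents of the operator pencils at every corner of $\Omega$. For a convex polygon the choice $\beta=0$ is admissible and one recovers ordinary $H^2(\Omega)$-regularity; for a nonconvex polygon one must take $\beta>1-\pi/\omega_{\max}$, where $\omega_{\max}\in(\pi,2\pi)$ is the largest interior angle of $\Omega$. Since the forbidden set is finite and discrete in $[0,\infty)$, an admissible $\beta\geq 0$ satisfying \eqref{eq:wgtapriori} always exists, as asserted.
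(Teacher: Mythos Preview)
Your argument is correct in outline and is precisely the classical route (Kondrat'ev localization, coefficient freezing at each corner, Mellin pencil analysis, perturbation by Lipschitz remainder) that underlies the weighted shift estimate. The paper, by contrast, does not reprove any of this: its proof consists of a single citation to \cite[Theorem~1.1]{BacutaLiNistor2016}, matching parameters ($m=1$, $b_j=0$, $\beta=1-a$) to read off \eqref{eq:wgtapriori} directly. So you are supplying, in sketch form, the content that the paper outsources.

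Two small points of precision in your write-up. First, after the affine change of variables that diagonalizes $A(\bmc)$, the opening angle of the transformed sector is in general \emph{not} the geometric angle $\omega_{\bmc}$; the critical exponents $\{k\pi/\widetilde\omega_{\bmc}\}$ refer to the transformed angle $\widetilde\omega_{\bmc}$, which depends on $A(\bmc)$. This does not affect the conclusion (the forbidden set remains discrete), but your parenthetical ``for the Laplacian, the set $\{k\pi/\omega_{\bmc}\}$'' should be read with this caveat. Second, the forbidden set of weight exponents is countably infinite, not finite; what matters is that it is discrete, so that admissible $\beta\in[0,1)$ can always be found --- and indeed the paper's Remark~\ref{Rmk:SingLaplace} records the same threshold $\beta>1-\pi/\omega_{\max}$ that you obtain.
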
 
\begin{proof}
This is result is a particular case of \cite[Theorem 1.1]{BacutaLiNistor2016}. 
It suffices to set, in the notation of this reference, $m=1$, $b_j=0$, and $\beta = 1 - a$.
\end{proof}

\begin{remark}[Laplacian]
\label{Rmk:SingLaplace}
In the special case that $\calL = -\Delta$, \ie when \eqref{eq:DirLw} corresponds to the Dirichlet 
Poisson problem in a polygon $\Omega$, the parameter $\beta$ must satisfy
$\beta > 1-\min_{\bmc} \pi/\omega_\bmc$, where 
$0<\omega_{\bmc} < 2\pi$ is the interior opening angle of $\Omega$ at the vertex $\bmc$.
If $\Omega$ is convex, the choice $\beta = 0$ is admissible,
and then \eqref{eq:wgtapriori} reduces to the classical 
regularity shift for the Dirichlet problem of the Poisson equation in convex domains.
We refer the reader to the discussion in \cite[equations (2) and (3)]{BacutaLiNistor2016} 
for more details.
\eremk
\end{remark}
 
Proposition~\ref{prop:H2Regw} and the regularity of $\ue$ given in Theorem \ref{thm:glob_reg_ue}
imply the following regularity result for $\ue$ in weighted norms in $\Omega$.
\begin{proposition}[global regularity of $\ue$: weighted estimates in $\Omega$]
\label{prop:wgtU}
Let $\ue \in \HL(y^{\alpha},\C)$ solve \eqref{alpha_harm_intro} with $s \in (0,1)$. 
Let $0 \leq \nu' < 1-s$. 
Assume that $\Omega\subset {\mathbb R}^2$ is a polygon and that 
$A$ and $c$ satisfy the assumptions of Proposition \ref{prop:H2Regw}.
Then there exists $\beta \geq 0$, 
which depends only on $\Omega$, $A$, and $c$, such that 
\begin{equation}
\label{eq:H2beta-a}
  \| \ue \|_{L^2(\omega_{\alpha-2\nu',\gamma},(0,\infty);H^2_\beta(\Omega))} 
  \lesssim 
  \| f \|_{\mathbb{H}^{1-s+\nu'}(\Omega)},
\end{equation}
where the weight $\omega_{\beta,\gamma}$ is defined as in \eqref{eq:weight}. 
In addition, for $\ell \in {\mathbb N}_0$, and $0 \leq \tilde \nu < 1+s$, 
there exists $\kappa >1$ such that
\begin{equation}
\label{eq:H2beta-b}
\| \partial_y^{\ell+1}\ue \|_{L^2(\omega_{\alpha+2(\ell+1)-2\tilde \nu ,\gamma},(0,\infty);H^2_\beta(\Omega))} 
\lesssim 
\kappa^{\ell+1}(\ell+1)! 
\| f \|_{\mathbb{H}^{1-s+\tilde \nu }(\Omega)}\;.
\end{equation} 
In both estimates, the hidden constants are independent of $\ue$ and $f$.
\end{proposition}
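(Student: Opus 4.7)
The plan is to reduce both estimates to a pointwise-in-$y$ application of the weighted elliptic regularity shift of Proposition~\ref{prop:H2Regw}, followed by integration in $y$ against the weight $\omega_{\cdot,\gamma}$ using the bounds already available from Theorem~\ref{thm:glob_reg_ue}.

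First, from the extension equation \eqref{alpha_harm_intro}, which reads $-\partial_y(y^\alpha \partial_y \ue) + y^\alpha \calL_{x'} \ue = 0$ in the distributional sense in $\C$, one obtains the pointwise identity
\begin{equation*}
\calL_{x'} \ue(\cdot,y) = \partial_y^2 \ue(\cdot,y) + \frac{\alpha}{y}\,\partial_y \ue(\cdot,y), \qquad y > 0,
\end{equation*}
which can also be read off directly from the separation of variables \eqref{eq:SepVar} and equation \eqref{psik} satisfied by $\psi_k$. In particular, for almost every $y>0$, the function $\ue(\cdot,y)\in H^1_0(\Omega)$ solves the Dirichlet problem \eqref{eq:DirLw} with right-hand side $g(\cdot)=\calL_{x'}\ue(\cdot,y)\in L^2(\Omega)$, and Proposition~\ref{prop:H2Regw} yields
\begin{equation*}
\|\ue(\cdot,y)\|_{H^2_\beta(\Omega)} \lesssim \|\calL_{x'}\ue(\cdot,y)\|_{L^2_\beta(\Omega)} \lesssim \|\calL_{x'}\ue(\cdot,y)\|_{L^2(\Omega)},
\end{equation*}
where the last inequality uses that $\Phi$ is bounded on the bounded polygon $\Omega$ and $\beta\geq 0$. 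Squaring, multiplying by $\omega_{\alpha-2\nu',\gamma}(y)$, integrating over $y\in(0,\infty)$, and invoking estimate \eqref{eq:reg_x_Delta} of Theorem~\ref{thm:glob_reg_ue} (which requires precisely $0\leq \nu' < 1-s$) gives \eqref{eq:H2beta-a}.

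For \eqref{eq:H2beta-b}, differentiating \eqref{eq:SepVar} termwise in $y$ shows that $\partial_y^{\ell+1}\ue(\cdot,y)\in H^1_0(\Omega)$ for every $y>0$, and since $\calL_{x'}$ acts only in $x'$ it commutes with $\partial_y^{\ell+1}$, so $\partial_y^{\ell+1}\ue(\cdot,y)$ solves the Dirichlet problem with right-hand side $\calL_{x'}\partial_y^{\ell+1}\ue(\cdot,y)$. Applying Proposition~\ref{prop:H2Regw} pointwise in $y$ as above, then integrating against the weight $\omega_{\alpha+2(\ell+1)-2\tilde\nu,\gamma}$, and finally invoking estimate \eqref{eq:reg_x_mu} of Theorem~\ref{thm:glob_reg_ue} yields the factorial-with-geometric bound on the right-hand side of \eqref{eq:H2beta-b}.

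The only point requiring mild care is that Proposition~\ref{prop:H2Regw} is a Hilbert-space elliptic regularity statement that must be applied for almost every $y\in(0,\infty)$ with a constant independent of $y$; this is admissible because the coefficients $A$ and $c$ of $\calL$ do not depend on $y$, and the weighted norm $L^2_\beta(\Omega)$ is dominated by the unweighted $L^2(\Omega)$ norm (which is what appears in Theorem~\ref{thm:glob_reg_ue}). Everything else is standard Fubini/Tonelli to swap the $y$-integration with the $x'$-norm. The main work has therefore already been done in Theorem~\ref{thm:glob_reg_ue} and Proposition~\ref{prop:H2Regw}; this proposition is essentially their composition.
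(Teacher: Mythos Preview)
Your proposal is correct and follows essentially the same approach as the paper: apply the weighted shift \eqref{eq:wgtapriori} pointwise in $y$ to $w=\partial_y^{m}\ue(\cdot,y)$ with $g=\calL_{x'}\partial_y^{m}\ue(\cdot,y)$, use that $\beta\geq 0$ to dominate $\|g\|_{L^2_\beta(\Omega)}$ by $\|g\|_{L^2(\Omega)}$, then square, multiply by the appropriate weight, integrate in $y$, and invoke \eqref{eq:reg_x_Delta} (for $m=0$) or \eqref{eq:reg_x_mu} (for $m=\ell+1$). Your citation of \eqref{eq:reg_x_Delta} and \eqref{eq:reg_x_mu} is in fact more precise than the paper's own references in its proof text.
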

\begin{proof}
The proof for \eqref{eq:H2beta-b} follows from \eqref{eq:reg_y_l} 
and that of \eqref{eq:H2beta-a} from \eqref{eq:reg_L2}
by using the the weighted regularity shift \eqref{eq:wgtapriori}. 
In fact, for a fixed $y>0$ and $m\in \bbN_0$, set $w =  \partial_y^{m} \ue(\cdot,y)$ in \eqref{eq:DirLw}. 
Notice that $g = \partial_y^{m} \calL_{x'} \ue(\cdot,y)$. Since $\beta \geq 0$ 
we have that $g \in L^2_\beta(\Omega)$ and estimate \eqref{eq:wgtapriori} holds. 
Square it and multiply it by either $\omega_{\alpha-2\nu',\gamma}$ if $m=0$, 
or $\omega_{\alpha+2m-2\nu,\gamma}$ when $m\geq 1$.
Integration with respect to $y$ over $(0,\infty)$ allows us then to conclude.
\end{proof}

The previous regularity result will be the basis for the analysis of a
$P_1$-FEM on properly refined meshes in $\Omega$ and it 
will allow us to recover the full first order convergence rate; 
see Theorem \ref{thm:P1Graded} below.

To accomplish this task, 
we associate with $H^2_\beta(\Omega)$ a sequence $\{ \calT^\ell_\beta \}_{\ell \geq 0}$
of bisection--tree meshes in $\Omega$ which, as constructed in 
\cite{GspMrin_IMAJNA2009}, are properly refined towards
the corners $\{\bmc\}$ of $\Omega$.
Bisection--tree meshes are uniformly shape regular 
(see, e.g., \cite[Lemma 1]{NochVees12:PrimAFEM}) 
and, as shown in \cite{GspzHeineSiebert2016}, the $L^2$-projections 
$\Pi_\beta^\ell := \Pi_{x'}^1: L^2(\Omega) \to S^1_0(\Omega,\calT^\ell_\beta)$
are uniformly stable in $L^2(\Omega)$ and also in $H^1(\Omega)$.
In addition, they satisfy optimal asymptotic error bounds, \ie for every $\ell\geq 0$ and every 
$w \in H^1_0(\Omega)$ we have
\begin{equation}\label{eq:Piellbeta-a}
N_\ell \| w - \Pi^\ell_\beta w \|^2_{L^2(\Omega)}  \lesssim \|w\|^2_{H^1(\Omega)},
\end{equation}
where $N_\ell = \dim S^1_0(\Omega,\calT^\ell_\beta) = \calO(h_\ell^{-2})$. 
In addition, for every $w\in H^2_\beta(\Omega)$, there holds
\begin{equation}\label{eq:Piellbeta}
N_\ell \| w - \Pi^\ell_\beta w \|^2_{L^2(\Omega)} 
+
\| \nabla_{x'} (w - \Pi^\ell_\beta w) \|^2_{L^2(\Omega)} 
\lesssim N_\ell^{-1} \| w \|^2_{H^2_\beta(\Omega)} 
\;.
\end{equation}
%
In view of the embedding $H^2_\beta(\Omega) \hookrightarrow C^0(\bar\Omega)$, the nodal interpolant is well--defined and \cite[Section 5]{MSS17_711} shows that \eqref{eq:Piellbeta} holds for such an interpolant.
We now use that $\Pi_\beta^\ell$ reproduces the discrete space $S^1_0(\Omega,\calT^\ell_\beta)$ and, owing to \cite{GspzHeineSiebert2016},
that it is bounded uniformly with respect to $\ell$ concurrently 
in $L^2(\Omega)$ and in $H^1(\Omega)$ to conclude \eqref{eq:Piellbeta}.

%
\begin{remark}[other quasi-interpolants]
\label{Rmk:xPrjction}
The $L^2$-projection in the previous argument can also be replaced with 
Scott-Zhang type quasi-interpolants that are projections onto 
$S^1_0(\Omega,\calT^\ell_\beta)$ and have suitable 
local stability properties in both $L^2$ and $H^1$. 
Such operators are constructed, e.g., in 
\cite[Lemma~{4}]{aurada-feischl-fuehrer-karkulik-praetorius15} 
by dropping in  the classical Scott-Zhang operator \cite{SZ:90} 
the degrees of freedom associated with nodes on $\partial\Omega$ and noting
that the remaing operator is well-defined and 
(locally) stable in $L^2(\Omega)$. 
\eremk
\end{remark}
%
\subsubsection{Linear interpolant $\pi^1_\eta$ on radical-geometric meshes in $[0,\Y]$}
\label{S:p1y}
%
%
To approximate the solution $\ue$ with respect to the extended variable $y$, 
we shall use a continuous, piecewise linear interpolant on suitably refined 
meshes $\calG^{k}_{gr,\eta}$ in $[0,\Y]$. 
The mesh is 
radical 
on $[0,1]$ 
and geometric 
on $[1,\Y]$, and the parameter $k$ indicates the mesh size 
near the point $1$. 
Specifically, for $\Y > 1$, $\eta > 0$ and $k = 1/N$ for an integer $N \in {\mathbb N}$, 
the mesh $\calG^{k}_{gr,\eta}$ is given by
\begin{subequations}
\label{eq:radical-geometric-mesh}
\begin{align}
\calG^{k}_{gr,\eta} &:= \{ I_{i} \,|\, i=1,\ldots,N\} \cup \{J_{j} \,|\, j=1,\ldots,N'\}, 
\\
\label{eq:radical_mesh}
I_{i} &= \bigl[ ((i-1) k)^\eta,(ik)^\eta\bigr],  \qquad i=1,\ldots,N, \\
J_{j} &= \bigl[\exp((j-1)k), \exp(jk)\bigr], \qquad j=1,\ldots,N'-1:= \lfloor N \log \Y\rfloor - 1,  
\\
J_{N'}& = \bigl[\exp((N'-1)k),\Y\bigr]. 
\end{align}
\end{subequations} 
Given $\eta$ and $\Y$, 
we denote by $\pi^1_{\eta}:C((0,\Y]) \rightarrow S^1((0,\Y),\calG^{k}_{gr,\eta})$ 
the piecewise linear interpolation operator over all the elements of the mesh $\calG^{k}_{gr,\eta}$
with the exception of the first one, 
\ie $I_1$. 
On that element, $\pi^1_{\eta}$ corresponds to the linear interpolant in the midpoint of $I_{1}$ 
and the right endpoint of $I_{1}$. 
The operator
\[
\pi^1_{\eta,\{\Y\}}:C((0,\Y])\rightarrow S^1_{\{\Y\}} \left( (0,\Y),\calG^{k}_{gr,\eta} \right) 
\]
is obtained from $\pi^1_\eta$ by subtracting a linear function on the 
element abutting at $\Y$ so as to satisfy $(\pi^1_{\eta,\{\Y\}} u)(\Y) = 0$. 
These operators naturally extend to Hilbert space valued functions.
The approximation properties of these operators are as follows.

\begin{lemma}[interpolation error estimates]
\label{lemma:graded-exponential-mesh} 
Let $X$ be a Hilbert space, 
$\alpha \in (-1,1)$, $\theta \in (0,1]$, and $0 \leq \gamma' < \gamma$.  
Let the mesh grading parameter $\eta$ that defines the mesh 
$\calG^{k}_{gr,\eta}$ satisfy $\eta \theta  \ge 1$. 
In this setting the following assertions hold.
\begin{enumerate}[(i)]
\item 
\label{item:lemma:graded-exponential-mesh-0} 
The number of elements in $\calG^{k}_{gr,\eta}$ is bounded by 
$k^{-1} (1 + \log \Y)$.  
\item 
\label{item:lemma:graded-exponential-mesh-i} 
For every  $u \in C((0,\Y];X)$ with 
$ u^\prime \in L^2(\omega_{\alpha+2(1-\theta),\gamma},(0,\Y);X)$ 
we have
\begin{align}
\label{eq:lemma:GEM-2}
\|u - \pi^1_{\eta} u\|_{L^2(\omega_{\alpha,\gamma'},(0,\Y);X)} 
& \lesssim 
k \|u^\prime \|_{L^2({\omega}_{\alpha+2(1-\theta),\gamma},(0,\Y);X)}, 
\\
\label{eq:lemma:GEM-4}
\|u - \pi^1_{\eta,\{\Y\}} u\|_{L^2({\omega}_{\alpha,\gamma'},(0,\Y);X)} 
& \lesssim
 k  \|u^\prime \|_{L^2({\omega}_{\alpha+2(1-\theta),\gamma},(0,\Y);X)} 
\\ \nonumber
& 
+ \sqrt{\Y k} \Y^\alpha \exp(\Y \gamma'/2) \|u(\Y)\|_X . 
\end{align}
Furthermore, under the assumption that $\lim_{y \rightarrow \infty} u(y) = 0$ in $X$ 
and the constraint 
\begin{equation}
\label{eq:lemma:GEM-10}
\Y^{1/2+\alpha} \exp(-\Y \gamma/2) \leq k^{1/2}
\end{equation}
the following estimate holds: 
\begin{align}
\label{eq:lemma:GEM-20}
\|u - \pi^1_{\eta,\{\Y\}} u\|_{L^2(y^\alpha,(0,\Y);X)} 
& \lesssim k  \|u^\prime \|_{L^2(\omega_{\alpha+2(1-\theta),\gamma},(0,\Y);X)} . 
\end{align}
\item 
\label{item:lemma:graded-exponential-mesh-ii} 
For $u \in C((0,\Y];X)$ with $u'' \in L^2(\omega_{\alpha+2(1-\theta),\gamma},(0,\Y);X)$ 
and $j \in \{0,1\}$
\begin{align}
\label{eq:lemma:GEM-25}
\|(u - \pi^1_{\eta} u)^{(j)}\|_{L^2(\omega_{\alpha,\gamma'},(0,\Y);X)} 
& \lesssim 
 k^{2-j} 
\|u^{\prime\prime} \|_{L^2(\omega_{\alpha+2(1-\theta),\gamma}(0,\Y);X)}, 
\\
\label{eq:lemma:GEM-27}
\|(u - \pi^1_{\eta,\{\Y\}} u)^{(j)}\|_{L^2(\omega_{\alpha,\gamma'},(0,\Y);X)} 
& \lesssim
 k^{2-j}  \|u^{\prime\prime} \|_{L^2(\omega_{\alpha+2(1-\theta),\gamma}(0,\Y);X)}
\\ \nonumber
& \qquad 
+ (\Y k)^{1/2-j} \Y^\alpha \exp(\Y \gamma'/2) \|u(\Y)\|_X. 
\end{align}
Furthermore, under the assumption that, for $j \in \{0,1\}$, $\lim_{y \rightarrow \infty} u^{(j)}(y)=0$ in $X$,  
and the constraint 
\begin{equation}
\label{eq:lemma:GEM-30}
\Y^{1/2+2\alpha} \exp(-\Y \gamma/2) \leq k^{2}
\end{equation}
the following estimate holds for $j \in \{0,1\}$:
\begin{align}
\label{eq:lemma:GEM-40}
\|(u - \pi^1_{\eta,\{\Y\}} u)^{(j)}\|_{L^2(y^\alpha,(0,\Y);X)} 
& \lesssim k^{2-j}  \|u^{\prime\prime} \|_{L^2(\omega_{\alpha+2(1-\theta),\gamma},(0,\Y);X)} . 
\end{align}
\end{enumerate}
\end{lemma}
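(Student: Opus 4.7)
The plan is to prove the lemma by establishing local interpolation estimates on each element of $\calG^{k}_{gr,\eta}$ and then summing them. Part \eqref{item:lemma:graded-exponential-mesh-0} is immediate from the construction: the radical part contributes $N = 1/k$ elements and the geometric part contributes at most $\lfloor N \log \Y \rfloor$ elements, giving the bound $k^{-1}(1+\log \Y)$.

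For parts \eqref{item:lemma:graded-exponential-mesh-i} and \eqref{item:lemma:graded-exponential-mesh-ii}, my plan is to treat the three types of elements separately. On a generic geometric element $J_j = [\exp((j-1)k),\exp(jk)]$, the length is $|J_j| = \exp((j-1)k)(e^k - 1) \sim k y$ for $y \in J_j$, and the weights $\omega_{\alpha,\gamma'}$ and $\omega_{\alpha+2(1-\theta),\gamma}$ are comparable up to constant factors on $J_j$ provided $k$ is sufficiently small. A standard Bramble-Hilbert argument then yields $\|u - \pi^1_\eta u\|^2_{L^2(J_j)} \lesssim |J_j|^2 \|u'\|^2_{L^2(J_j)}$, and after inserting weights we pick up the factor $y^{2\theta}e^{-(\gamma-\gamma')y}$, which is uniformly bounded since $\gamma > \gamma'$. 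On a generic radical element $I_i = [((i-1)k)^\eta,(ik)^\eta]$ with $i\geq 2$, we have $|I_i| \sim k(ik)^{\eta - 1}$ and $y \sim (ik)^\eta$; the key algebraic step is to verify $|I_i|^2 y^\alpha \lesssim k^2 y^{\alpha+2(1-\theta)}$, which reduces to $(ik)^{\eta - 1} \lesssim (ik)^{\eta(1-\theta)}$. Since $ik \leq 1$, this requires $\eta - 1 \geq \eta(1-\theta)$, i.e., $\eta\theta \geq 1$, which is precisely the hypothesis. The second-derivative estimates in \eqref{item:lemma:graded-exponential-mesh-ii} follow identically with an extra factor $|J_j|$ or $|I_i|$.

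The main obstacle will be the singular element $I_1 = [0, k^\eta]$, where the standard nodal interpolant is undefined because $u$ need not have a value at $y=0$. The modification of $\pi^1_\eta$ on $I_1$ via midpoint and right-endpoint interpolation must be combined with a weighted Hardy-type estimate. The idea is to control $u$ on $(0, k^\eta/2)$ by the right endpoint value plus an integral of $u'$, then balance this against the weight $y^\alpha$ using $\int_0^{k^\eta} y^\alpha\,dy \sim k^{\eta(\alpha+1)}$ and reexpressing this with $\eta\theta\geq 1$ to absorb into $k^2 \|u'\|^2_{L^2(\omega_{\alpha+2(1-\theta),\gamma}, I_1)}$. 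The contribution from the elements sums to the global estimates \eqref{eq:lemma:GEM-2} and \eqref{eq:lemma:GEM-25}.

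For the statements involving $\pi^1_{\eta,\{\Y\}}$, I write $\pi^1_{\eta,\{\Y\}}u = \pi^1_\eta u - \chi_{J_{N'}}\ell(y)$ where $\ell$ is the linear function on $J_{N'}$ that vanishes at the left endpoint and equals $\pi^1_\eta u(\Y)$ at $y=\Y$; since $\pi^1_\eta u(\Y) = u(\Y)$, the correction has size $|u(\Y)|$. Using $|J_{N'}|\leq \Y$, a direct estimate of $\|\chi_{J_{N'}}\ell\|_{L^2(\omega_{\alpha,\gamma'},J_{N'})}$ and $\|\chi_{J_{N'}}\ell'\|_{L^2(\omega_{\alpha,\gamma'},J_{N'})}$ yields the extra terms $\sqrt{\Y k}\,\Y^\alpha\exp(\Y\gamma'/2)\|u(\Y)\|_X$ and $(\Y k)^{1/2-j}\Y^\alpha\exp(\Y\gamma'/2)\|u(\Y)\|_X$ in \eqref{eq:lemma:GEM-4} and \eqref{eq:lemma:GEM-27}. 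Finally, for \eqref{eq:lemma:GEM-20} and \eqref{eq:lemma:GEM-40}, I use $u(\Y) = -\int_\Y^\infty u'(y)\,dy$ (or the analogous identity for $u'$) and apply Cauchy-Schwarz against the weight $\omega_{\alpha+2(1-\theta),\gamma}^{-1}$, whose integral over $(\Y,\infty)$ is $\sim \Y^{-\alpha-2(1-\theta)}e^{-\gamma\Y}/\gamma$. The constraints \eqref{eq:lemma:GEM-10} and \eqref{eq:lemma:GEM-30} are then exactly what is needed to absorb the boundary term into $k\|u'\|$ or $k^{2-j}\|u''\|$, completing the proof.
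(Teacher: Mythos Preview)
Your proposal is correct and follows essentially the same route as the paper: elementwise estimates distinguishing the first radical element (via a weighted Hardy-type bound, which the paper isolates as a separate lemma on the reference interval), the remaining radical elements (where $\eta\theta\ge 1$ is exactly the condition needed), and the geometric elements (slowly varying weights and the bounded factor $y^{2\theta}e^{-(\gamma-\gamma')y}$), followed by the linear correction on $J_{N'}$ and the integral representation $u(\Y)=-\int_\Y^\infty u'$ to absorb the boundary term. One small point to tighten: to obtain the factor $\sqrt{\Y k}$ in \eqref{eq:lemma:GEM-4} you need $|J_{N'}|\sim k\Y$ (from $e^k-1\sim k$), not merely $|J_{N'}|\le \Y$; the paper uses this explicitly.
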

\begin{proof}
We present the details for the proof of \eqref{item:lemma:graded-exponential-mesh-i}, 
as that of \eqref{item:lemma:graded-exponential-mesh-ii} is similar. 
The technique used to obtain interpolation error estimates on the radical mesh on $[0,1]$ 
is well-established; see, for instance, \cite[Example~{3.47}]{phpSchwab1998}. 
We introduce
the mesh points $y_i:= (ik)^\eta$, $i=0,\ldots,N$ so that  $I_{i} = [y_{i-1},y_i]$.
 
For the first element $I_{1} = [y_0,y_1] = [0,k^\eta]$, 
we invoke the estimate \eqref{eq:lemma:I1-10} with the choice 
$\delta = 1 - \theta \in [0,1)$ and a scaling argument to conclude that
%
\begin{equation} \label{eq:lemma:graded-exponential-mesh-50}
\|u - \pi^1_{\eta} u\|^2_{L^2(y^\alpha,I_1;X)} 
\lesssim 
k_1^{2\theta} \|u^\prime\|^2_{L^2(y^{\alpha+2(1-\theta)},I_{1};X)}, 
\end{equation}
where $k_1= |I_1|$; we recall that $\theta \in (0,1]$.

Over the remaining elements $I_{i} $, $i=2,\ldots,N$, of $[0,1]$, 
we use that $
k_i \lesssim k y_{i-1}^{(\eta-1)/\eta},
$
where 
$k_i = |I_{i}| = y_i - y_{i-1} $ 
and $\eta$ defines the radical mesh on $[0,1]$ as in \eqref{eq:radical_mesh}.
We thus recall the standard interpolation estimate 
$$
\|u - \pi^1_{\eta} u\|^2_{L^2(I_{i})} \lesssim k_i^2 \|u^\prime\|^2_{L^2(I_{i})}
$$
and obtain, upon using that 
$\max_{y \in I_{i}} y^\alpha \lesssim \min_{y \in I_{i}} y^\alpha$ and tensorization with $X$, 
the bound
\begin{align}
\nonumber 
\|u - \pi^1_{\eta} u\|^2_{L^2(y^\alpha,I_{i};X)} 
&\lesssim k_i^2 \|u^\prime\|^2_{L^2(y^\alpha,I_{i};X)} 
\lesssim 
k^2 y_{i-1}^{2(\eta-1)/\eta} \|u^\prime\|^2_{L^2(y^\alpha,I_{i};X)} 
\\
\label{eq:lemma:graded-exponential-mesh-100}
& \lesssim 
k^2  \|u^\prime\|^2_{L^2(y^{\alpha+2(\eta-1)/\eta},I_{i,};X)} 
\lesssim k^2  \|u^\prime\|^2_{L^2(y^{\alpha+2(1-\theta)},I_{i};X)}
\;.
\end{align}
The last relation holds because $\eta\theta \ge 1$.

For the elements beyond $y=1$, we begin by setting, for $j=1,\ldots,N'$,
$J_{j} := [\widetilde y_{j-1},\widetilde y_j] = \bigl[\exp((j-1)k, \exp(jk)\bigr]$.
Let us now notice that, since $k \leq 1$,
\begin{equation}
\label{eq:lemma:graded-exponential-mesh-200}
|J_{j}|  = \exp((j-1) k) (1-e^k) \sim \widetilde y_{j-1} k, \qquad j=1,\ldots,N'-1\;. 
\end{equation}
Using that the weight functions $\omega_{\alpha,\gamma'}$ and $\omega_{\alpha,\gamma}$, 
defined as in \eqref{eq:weight}, are slowly varying over the intervals $J_{j}$, 
\ie
\begin{equation}
\label{eq:slowly:varying}
\max_{y \in J_{j}} \omega_{\alpha,\gamma'}(x)  \lesssim \min_{y \in J_{j}} \omega_{\alpha,\gamma'}(x)
\quad \textrm{and} \quad
\max_{y \in J_{j}} \omega_{\alpha,\gamma}(x)  \lesssim \min_{y \in J_{j}} \omega_{\alpha,\gamma}(x),
\end{equation}
 we obtain
\begin{align*}
  \sum_{j} \|u - \pi^1_{\eta} u\|^2_{L^2(\omega_{\alpha,\gamma'},J_{j};X)} 
    &\lesssim 
  \sum_{j} |J_{j}|^2 \|u^\prime \|^2_{L^2(\omega_{\alpha,\gamma'}, J_{j};X)}  \\
    &\lesssim
  k^2 \sum_{j} \widetilde y_{j-1}^2 e^{-(\gamma - \gamma') \widetilde y_{j-1}} 
  \|u^\prime \|^2_{L^2(\omega_{\alpha,\gamma},J_{j};X)},
\end{align*}
where in the last step we used \eqref{eq:lemma:graded-exponential-mesh-200}. 
Using now that 
$\widetilde y_{j-1}^2 e^{-(\gamma - \gamma') \widetilde y_{j-1}} \lesssim 1$ 
and \eqref{eq:slowly:varying}, again, we finally arrive at
\begin{equation}
  \sum_{j} \|u - \pi^1_{\eta} u\|^2_{L^2(\omega_{\alpha,\gamma'},J_{j};X)} 
\lesssim
k^2 \sum_{j} 
\|u^\prime \|^2_{L^2(\omega_{\alpha,\gamma},J_{j};X)}.
\label{eq:lemma:graded-exponential-mesh-300}
\end{equation}
Combining 
\eqref{eq:lemma:graded-exponential-mesh-50}, 
\eqref{eq:lemma:graded-exponential-mesh-100}, and \eqref{eq:lemma:graded-exponential-mesh-300} 
finishes the proof of the approximation  properties of $\pi^1_{\eta}$. 
The correction on the 
last element to obtain \eqref{eq:lemma:GEM-4} for the operator $\pi^1_{\eta,\{\Y\}}$ 
is straight forward in view of \eqref{eq:lemma:graded-exponential-mesh-200}. 
The estimate \eqref{eq:lemma:GEM-20} follows from \eqref{eq:lemma:GEM-4} by controlling
$\|u(\Y)\|_X$ with the aid of Lemma~\ref{lemma:inftyExp}. 

The proof of (\ref{item:lemma:graded-exponential-mesh-ii}) follows along similar lines. 
\end{proof}

It is worth stressing that the choices $k = 2^{-\ell}$ lead to nested meshes.

\begin{corollary}[nested meshes]
\label{cor:nested-graded-meshes}
For every fixed $\eta \ge 0$, $\Y \ge 1$ and 
for $k_\ell = 2^{-\ell}$, the sequence
$\{ \calG^{k_\ell}_{gr,\eta}\}_{\ell=0}^\infty$ of graded meshes in 
$(0,\Y)$ is nested and has $\calO(2^\ell (1 + \log \Y))$ elements. 
\end{corollary}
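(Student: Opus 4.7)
The plan is to verify the two assertions—nestedness and cardinality—by unpacking the explicit node sets of $\calG^{k_\ell}_{gr,\eta}$ and exploiting the dyadic halving $k_{\ell+1} = k_\ell/2$. The radical part and the geometric part can be handled independently once the transition node $y=1$ and the terminal node $y=\Y$ are checked.

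First I would list the nodes. With $N_\ell = 2^\ell = k_\ell^{-1}$, the radical part of $\calG^{k_\ell}_{gr,\eta}$ has nodes $y_i^{(\ell)} = (i\, 2^{-\ell})^\eta$ for $i=0,\ldots,N_\ell$, and the geometric part has nodes $\widetilde y_j^{(\ell)} = \exp(j\, 2^{-\ell})$ for $j=0,\ldots,N'_\ell - 1$, together with the terminal node $\Y$, where $N'_\ell = \lfloor N_\ell \log \Y\rfloor$.

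For nestedness, I would check the three ingredients separately.
\emph{(a) Radical part.} Every node $y_i^{(\ell)} = (i\, 2^{-\ell})^\eta$ at level $\ell$ equals $y_{2i}^{(\ell+1)} = (2i\, 2^{-(\ell+1)})^\eta$, so the nodes at level $\ell$ are a subset of those at level $\ell+1$, and in particular the transition node $y=1$ is preserved.
\emph{(b) Geometric part.} Similarly $\widetilde y_j^{(\ell)} = \exp(j\, 2^{-\ell}) = \exp(2j\, 2^{-(\ell+1)}) = \widetilde y_{2j}^{(\ell+1)}$. The only point requiring care is to check that the largest interior geometric node at level $\ell$, namely $\widetilde y_{N'_\ell -1}^{(\ell)}$, is realised as $\widetilde y_{2(N'_\ell-1)}^{(\ell+1)}$ at level $\ell+1$; this requires $2(N'_\ell -1) \leq N'_{\ell+1}-1$. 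Since $N'_{\ell+1} = \lfloor 2 N_\ell \log \Y\rfloor \ge 2 \lfloor N_\ell \log \Y\rfloor = 2 N'_\ell$, the inequality holds. Moreover both levels share the terminal node $\Y$. Combining (a) and (b) shows that the node set of $\calG^{k_\ell}_{gr,\eta}$ is contained in that of $\calG^{k_{\ell+1}}_{gr,\eta}$, which proves nestedness.

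For the cardinality, I would simply invoke part (\ref{item:lemma:graded-exponential-mesh-0}) of Lemma~\ref{lemma:graded-exponential-mesh}, which gives $\#\calG^{k_\ell}_{gr,\eta} \le k_\ell^{-1}(1+\log\Y) = 2^\ell(1+\log\Y)$. The lower bound of the same order follows from the count $N_\ell + N'_\ell = 2^\ell + \lfloor 2^\ell\log\Y\rfloor$.

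There is no real obstacle here; the only potentially delicate point is the treatment of the terminal geometric interval $J_{N'}$, where the floor function in the definition of $N'_\ell$ prevents a trivial doubling identity. The inequality $\lfloor 2x\rfloor \ge 2\lfloor x\rfloor$ handles this cleanly, as noted above.
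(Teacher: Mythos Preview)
Your proposal is correct and follows essentially the same approach as the paper: the paper's own proof is a one-line appeal to the explicit definition of the mesh points in \eqref{eq:radical-geometric-mesh}, and you have simply unpacked that appeal by verifying the doubling relations $y_i^{(\ell)} = y_{2i}^{(\ell+1)}$ and $\widetilde y_j^{(\ell)} = \widetilde y_{2j}^{(\ell+1)}$ and handling the floor in $N'_\ell$ via $\lfloor 2x\rfloor \ge 2\lfloor x\rfloor$. Your additional care with the terminal interval $J_{N'}$ is a genuine detail that the paper's terse proof leaves implicit.
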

\begin{proof}
For fixed $\Y>0$, 
it follows directly from the definition of the mesh points \eqref{eq:radical-geometric-mesh}, 
in terms of $k$, that the meshes are nested.
\end{proof}
\subsubsection{Tensor $P_1$-FEM in $\calC$ with corner mesh refinement in $\Omega$}
\label{S:TP1FEM}
%
We now provide a convergence estimate in refined meshes over, not necessarily convex, polygons.

\begin{theorem}[error estimates]
\label{thm:P1Graded}
Let $u\in \mathbb{H}^s(\Omega)$ and $\ue\in\HL(y^\alpha,\C)$ solve \eqref{fl=f_bdddom} and 
\eqref{alpha_harm_intro}, respectively, 
with $f\in \mathbb{H}^{1-s}(\Omega)$ and $\Omega\subset {\mathbb R}^2$ 
a bounded polygon with straight sides and (a finite set of) corners $\{ \bmc \}$.
Let $\beta \ge 0$ be such that \eqref{eq:wgtapriori} holds and 
let $\{\calT^\ell_\beta\}_{\ell}$ be a sequence of graded meshes 
that satisfy \eqref{eq:Piellbeta-a} and \eqref{eq:Piellbeta}. 
Let $\calG^{k}_{gr,\eta}$ be the graded--exponential mesh 
of \eqref{eq:radical-geometric-mesh} with $\eta$ chosen to satisfy $\eta s > 1$, 
$k = 1/N$ with $N \in {\mathbb N}$
chosen so that $2h_\ell^{-1} \geq 1/k \geq h_\ell^{-1}$,
and with the cut-off $\Y>0$ chosen as 
\begin{equation}\label{eq:Ylogh}
\Y \sim |\log h_\ell| \;.
\end{equation}
Denote by $\ue_{h_\ell,M}$ the solution of \eqref{eq:alpha_weak_UhM} 
over the space $\V_{h,M}^{1,1}(\calT^{\ell}_\beta, \calG^{k}_{gr,\eta})$. 
In this setting we have the following error estimate
\begin{equation}\label{eq:FTconv}
\| u - \tr \ue_{h,M} \|_{\Hs} \lesssim \|\nabla (\ue - \ue_{h,M})\|_{L^2(y^\alpha,\C)} 
\lesssim h_\ell \| f \|_{\Ws}. 
\end{equation}
%
In addition, the total number of degrees of freedom behaves like
\begin{equation}\label{eq:NFT}
\calN_{\Omega,\Y} :=
\dim \V_{h,M}^{1,1}(\calT^{\ell}_\beta, \calG^{k}_{gr,\eta})
= \calO(h_\ell^{-3} \log |\log h_\ell|)
= \calO(\calN_\Omega^{1+1/2}\log \log \calN_\Omega) 
\;,
\end{equation}
where $\calN_\Omega = \# \calT_\beta^\ell$.
\end{theorem}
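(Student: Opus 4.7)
The plan is to combine the C\'ea-type quasioptimality of Lemma~\ref{lem:GalErr}, the exponential truncation bound \eqref{eq:ExpDec}, and the tensor-product error splitting of Lemma~\ref{L:ErrSplt}. Choosing $\Y = C_0|\log h_\ell|$ with $C_0$ large enough that $\exp(-\gamma\Y/2) \lesssim h_\ell$ forces the truncation contribution $\|\nabla\ue\|_{L^2(y^\alpha,\C\setminus\C_\Y)}$ to be $\lesssim h_\ell\|f\|_{\mathbb{H}^{-s}(\Omega)} \leq h_\ell \|f\|_{\Ws}$. In Lemma~\ref{L:ErrSplt} I take $\Pi^q_{x'} := \Pi^\ell_\beta$ and $\pi^\bmr_y := \pi^1_{\eta,\{\Y\}}$, so the task reduces to bounding the two semidiscrete errors $\|\nabla(\ue - \Pi^\ell_\beta \ue)\|_{L^2(y^\alpha,\C_\Y)}$ and $\|\nabla(\ue - \pi^1_{\eta,\{\Y\}}\ue)\|_{L^2(y^\alpha,\C_\Y)}$ separately, each by $h_\ell \|f\|_{\Ws}$.

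For the $x'$-semidiscrete error I split $\nabla = (\nabla_{x'},\partial_y)$. Since $\Pi^\ell_\beta$ acts only in $x'$, it commutes with $\partial_y$. The estimate \eqref{eq:Piellbeta} used pointwise in $y$, together with the weighted $H^2_\beta$-regularity \eqref{eq:H2beta-a} (with $\nu'=0$), controls $\|\nabla_{x'}(\ue-\Pi^\ell_\beta\ue)\|_{L^2(y^\alpha,\C_\Y)}$ by $h_\ell\|\ue\|_{L^2(\omega_{\alpha,\gamma},(0,\infty);H^2_\beta(\Omega))} \lesssim h_\ell\|f\|_{\Ws}$, where the pointwise domination $y^\alpha \leq \omega_{\alpha,\gamma}(y)$ is used. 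For the $\partial_y$-part, applying \eqref{eq:Piellbeta-a} to $\partial_y\ue(\cdot,y)$ and integrating in $y$ yields $\lesssim h_\ell\|\partial_y\ue\|_{L^2(\omega_{\alpha,\gamma},(0,\infty);H^1(\Omega))}$, and this last norm is bounded by $\|f\|_{\Ws}$ via Theorem~\ref{thm:glob_reg_ue}, specifically \eqref{eq:reg_y_l} with $\ell=0$, $\tilde\nu=0$ and \eqref{eq:reg_x_grad_mu} with $\ell=0$, $\nu=1$.

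For the $y$-semidiscrete error I exploit that $\pi^1_{\eta,\{\Y\}}$ commutes with $\nabla_{x'}$ and apply Lemma~\ref{lemma:graded-exponential-mesh}, parts \eqref{item:lemma:graded-exponential-mesh-i} and \eqref{item:lemma:graded-exponential-mesh-ii}, with $X = L^2(\Omega)^2$ on $u = \nabla_{x'}\ue$ and $X = L^2(\Omega)$ on $u = \ue$ respectively. The hypothesis $\eta s > 1$ lets me pick $\theta \in (0,s)$ with $\eta\theta \geq 1$; then \eqref{eq:lemma:GEM-20} and \eqref{eq:lemma:GEM-40} produce the weight $\omega_{\alpha+2(1-\theta),\gamma}$ on the right-hand side, which matches the regularity estimates \eqref{eq:reg_x_grad_mu} (at $\ell=0$, $\nu=\theta$) and \eqref{eq:reg_y_l} (at $\ell=1$, $\tilde\nu=\theta$). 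These yield $\lesssim k\|f\|_{\mathbb{H}^{-s+\theta}(\Omega)} \leq k\|f\|_{\Ws} \sim h_\ell\|f\|_{\Ws}$. The compatibility constraints \eqref{eq:lemma:GEM-10} and \eqref{eq:lemma:GEM-30} reduce to $\Y^{1/2+2\alpha}\exp(-\gamma\Y/2) \leq k^2$, which is ensured by enlarging $C_0$ in $\Y = C_0|\log h_\ell|$.

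Combining these ingredients proves \eqref{eq:FTconv}; the trace estimate \eqref{Trace_estimate} transfers the $\HL(y^\alpha,\C)$-bound to $\|u-\tr\ue_{h,M}\|_{\Hs}$. For the complexity \eqref{eq:NFT}, bisection-tree refinement yields $\#\calT^\ell_\beta = \calN_\Omega = \calO(h_\ell^{-2})$, while Lemma~\ref{lemma:graded-exponential-mesh}\eqref{item:lemma:graded-exponential-mesh-0} gives $\#\calG^k_{gr,\eta} = \calO(k^{-1}(1+\log\Y)) = \calO(h_\ell^{-1}\log|\log h_\ell|)$, so the tensor-product dimension is $\calO(h_\ell^{-3}\log|\log h_\ell|) = \calO(\calN_\Omega^{3/2}\log\log\calN_\Omega)$. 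The main obstacle is the bookkeeping that aligns the singular weights on the right-hand sides of the $y$-regularity estimates \eqref{eq:reg_y_l}--\eqref{eq:reg_x_grad_mu} with the plain $y^\alpha$-weight of the energy norm: for the $\partial_y$-contribution this forces $\theta<s$ in the interpolation bound, so the radical grading must satisfy $\eta\theta\geq 1$, and the hypothesis $\eta s>1$ is exactly what permits this simultaneous choice.
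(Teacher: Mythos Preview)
Your proposal is correct and follows essentially the same approach as the paper: the same quasioptimality/truncation/tensor-splitting reduction via Lemmas~\ref{lem:GalErr} and~\ref{L:ErrSplt}, the same use of \eqref{eq:Piellbeta-a}--\eqref{eq:Piellbeta} and Proposition~\ref{prop:wgtU} for the $x'$-error, and the same use of Lemma~\ref{lemma:graded-exponential-mesh} together with Theorem~\ref{thm:glob_reg_ue} for the $y$-error. The only cosmetic difference is that for the $\nabla_{x'}$-part of the $y$-interpolation error the paper phrases the application of Lemma~\ref{lemma:graded-exponential-mesh}\eqref{item:lemma:graded-exponential-mesh-i} with $X=H^1_0(\Omega)$ rather than with $X=L^2(\Omega)^2$ applied to $\nabla_{x'}\ue$; these are equivalent, and your explicit choice of $\theta\in(0,s)$ with $\eta\theta\ge 1$ makes transparent exactly how the hypothesis $\eta s>1$ enters.
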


Before proving Theorem~\ref{thm:P1Graded}, 
we note a corollary that follows from a simple interpolation argument.

\begin{corollary}[reduced regularity]
\label{cor:P1Graded}
Assume that the meshes are constructed as in Theorem~\ref{thm:P1Graded} and that
$f \in {\mathbb H}^{-s + \sigma}(\Omega)$, with $\sigma \in [0,1]$. 
Then we have
\begin{equation}\label{eq:FTconv-cor}
\| u - \tr \ue_{h,M} \|_{\mathbb{H}^s(\Omega)} \lesssim \|\nabla (\ue - \ue_{h,M})\|_{L^2(y^\alpha,\C)} 
\lesssim 
h_\ell^\sigma \| f \|_{\mathbb{H}^{\sigma-s}(\Omega)},
\end{equation}
where the hidden constant also depends on $\sigma$.
\end{corollary}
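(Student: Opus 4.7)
The plan is to view the error as the value of a linear operator acting on the data $f$, establish the two endpoint mapping properties $\sigma = 0$ and $\sigma = 1$, and conclude by Hilbert-scale interpolation on the spectrally defined family $\{\mathbb{H}^t(\Omega)\}_{t\in\mathbb R}$.

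Concretely, fix the mesh $\calT^\ell_\beta$, the radical-geometric mesh $\calG^{k}_{gr,\eta}$, and the truncation $\Y$ as in Theorem~\ref{thm:P1Graded}, so that $\V_{h,M}^{1,1}$ is frozen. Define the linear map
\begin{equation*}
T \,:\, f \;\longmapsto\; \nabla(\ue - \ue_{h,M}) \in L^2(y^\alpha,\C)^{d+1},
\end{equation*}
where $\ue$ is the weak solution of \eqref{alpha_harm_intro} with data $f$ and $\ue_{h,M} = G^{1,1}_{h,M} \ue$. Linearity of $T$ follows from the linearity of both the extension problem and the Galerkin projection. I first want to check the endpoint $\sigma = 1$: this is precisely \eqref{eq:FTconv} of Theorem~\ref{thm:P1Graded}, yielding $\|T f\|_{L^2(y^\alpha,\C)} \lesssim h_\ell \|f\|_{\mathbb{H}^{1-s}(\Omega)}$. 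For the endpoint $\sigma = 0$, I use coercivity of $\blfa{\C}$, C\'ea's estimate \eqref{eq:GalErr}, the exponential truncation bound \eqref{eq:ExpDec}, and the standard energy stability $\|\nabla \ue\|_{L^2(y^\alpha,\C)} \lesssim \|f\|_{\mathbb{H}^{-s}(\Omega)}$ of the Caffarelli--Silvestre extension (which follows from \eqref{eq:ue-variational} together with the trace inequality \eqref{Trace_estimate} and the weighted Poincar\'e inequality \eqref{Poincare_ineq}). Altogether this gives $\|T f\|_{L^2(y^\alpha,\C)} \lesssim \|f\|_{\mathbb{H}^{-s}(\Omega)}$, with a constant that is uniform in $h_\ell$.

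With these two endpoints in hand, I invoke the real (or complex) interpolation functor with parameter $\sigma \in [0,1]$. Since the scale $\{\mathbb{H}^t(\Omega)\}_t$ is defined in \eqref{def:Hs} through spectral fractional powers of $\calL$, it is an exact Hilbert interpolation scale, and
\begin{equation*}
\bigl[\mathbb{H}^{-s}(\Omega),\,\mathbb{H}^{1-s}(\Omega)\bigr]_\sigma \;=\; \mathbb{H}^{-s+\sigma}(\Omega), \qquad \sigma \in [0,1].
\end{equation*}
Interpolating the two endpoint bounds on $T$ then yields
\begin{equation*}
\|\nabla(\ue - \ue_{h,M})\|_{L^2(y^\alpha,\C)} \;\lesssim\; h_\ell^{\sigma} \,\|f\|_{\mathbb{H}^{-s+\sigma}(\Omega)},
\end{equation*}
with a constant depending on $\sigma$ but not on $h_\ell$. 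The bound on $\|u - \tr \ue_{h,M}\|_{\mathbb{H}^s(\Omega)}$ then follows by applying the trace estimate \eqref{Trace_estimate} to $\ue - \ue_{h,M} \in \HL(y^\alpha,\C)$ and using $\tr \ue = u$.

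The only real subtlety is to make sure the two endpoint estimates hold with constants that are independent of $h_\ell$ and $\Y$; this is why I keep the choice $\Y \sim |\log h_\ell|$ of Theorem~\ref{thm:P1Graded} for both endpoints, so that the exponential truncation error in \eqref{eq:ExpDec} is already absorbed into the right-hand side for $\sigma = 0$ as well as for $\sigma = 1$. Once this is ensured, the interpolation step is formal.
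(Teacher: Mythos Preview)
Your proposal is correct and matches the paper's approach: the paper simply states that the corollary ``follows from a simple interpolation argument,'' and you have written out exactly that argument in full, with the right endpoint bounds and the right identification of the interpolation scale.
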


The proof of Theorem~\ref{thm:P1Graded} follows similar arguments to 
\cite{NOS} and \cite[Section 4.1]{MPSV17} and uses the stability and 
approximation properties \eqref{eq:Piellbeta} of $\Pi_\ell^\beta$. 
For completeness we provide the details.

\begin{numberedproof}{of Theorem~\ref{thm:P1Graded}}
For the given choice of $k$, $\eta$ and $\Y$,
we denote by $\pi^{1,\ell}_{\eta,\{\Y\}}$ the nodal interpolation operator
on the mesh \eqref{eq:radical-geometric-mesh},
which we analyzed in Lemma~\ref{lemma:graded-exponential-mesh}.

By Lemmas~\ref{lem:GalErr} and \ref{L:ErrSplt}, 
and by the choice \eqref{eq:Ylogh} (recall \eqref{eq:ExpDec})
 it suffices to bound 
\[
\|  \nabla(\ue - \pi^{1,\ell}_{\eta,\{ \Y \}} \ue) \|_{L^2(y^{\alpha}, \calC_{\Y})}
+ 
\| \nabla(\ue - \Pi^\ell_\beta \ue) \|_{L^2(y^{\alpha},\calC_{\Y})}
=: I + II \;.
\]
Recalling that $\nabla = (\nabla_{x'}, \partial_y)$ 
we split the first term $I$ into
\[
  I \lesssim 
  \|  \partial_y ( \ue - \pi^{1,\ell}_{\eta,\{ \Y \}} \ue ) \|_{L^2(y^{\alpha},\calC_{\Y})}
+
\| \nabla_{x'}(\ue - \pi^{1,\ell}_{\eta,\{ \Y \}} \ue) \|_{L^2(y^{\alpha},\calC_{\Y})} 
  = : I_a + I_b
\;.
\]
In view of \eqref{eq:Ylogh}, 
we immediately obtain that the conditions \eqref{eq:lemma:GEM-10} and \eqref{eq:lemma:GEM-30} 
of Lemma \ref{lemma:graded-exponential-mesh} are satisfied. 
We can thus, since $\eta s >1$, bound the term $I_a$ using 
Lemma~\ref{lemma:graded-exponential-mesh}, item (\ref{item:lemma:graded-exponential-mesh-ii}), with $j=1$
and $X=L^2(\Omega)$  and the term $I_b$ using 
Lemma~\ref{lemma:graded-exponential-mesh}, 
item (\ref{item:lemma:graded-exponential-mesh-i}) with $X=H^1_0(\Omega)$. 
We have thus arrived at
\begin{align*}
I \lesssim I_a + I_b \lesssim h_\ell \|f\|_{{\mathbb H}^0(\Omega)},
\end{align*}
where we have also used the regularity estimates 
of Theorem \ref{thm:glob_reg_ue}.

We apply the same splitting to the term $II$ to arrive at
\[
II  \lesssim 
\|  \partial_y ( \ue - \Pi^{\ell}_{\beta} \ue ) \|_{L^2(y^{\alpha},\calC_{\Y})}
+
\| \nabla_{x'}(\ue - \Pi^{\ell}_{\beta} \ue) \|_{L^2(y^{\alpha},\calC_{\Y})}
 = : II_a + II_b
\;.
\]
Since $N_\ell = \calO(h_\ell^{-2})$ we have, from \eqref{eq:Piellbeta-a}, that
\[
II_a  = \displaystyle
\| \partial_y \ue -  \Pi^{\ell}_{\beta} (\partial_y \ue) \|_{L^2(y^\alpha, \calC_{\Y})}
\lesssim 
h_\ell \| \partial_y \ue \|_{L^2(\omega_{\alpha,0},(0,\Y);H^1(\Omega))} 
\;.
\]
To bound $II_b$ we use \eqref{eq:Piellbeta} and obtain
\[
  II_b \lesssim h_\ell \| \ue \|_{L^2(\omega_{\alpha,0},(0,\Y);H^2_\beta(\Omega))}
\;.
\]
Using the regularity estimate \eqref{eq:H2beta-a} with $\nu'=0$ 
we conclude the proof of \eqref{eq:FTconv}. 

To obtain \eqref{eq:NFT}, we first note that by 
Lemma~\ref{lemma:graded-exponential-mesh} 
item \eqref{item:lemma:graded-exponential-mesh-0}, 
the number of elements in $\calG^{k}_{gr,\eta}$ with $h_\ell=2^{-\ell}$ 
and with the choice $\Y \simeq |\log h_\ell| \simeq \ell$ 
is $\calO(2^\ell \log \ell)$. 
We finally observe that the total 
number of degrees of freedom in the tensor product space
is the product of the dimensions of the component spaces, i.e., 
$\calO(h_\ell^{-2} h_\ell^{-1} \log |\log h_\ell|)$.
\end{numberedproof}

%
%
\subsubsection{Sparse grid $P_1$-FEM with corner mesh refinement}
\label{S:sGP1FEM}
%
The convergence order \eqref{eq:FTconv} is optimal, however, the 
complexity of the method implied by \eqref{eq:NFT} is superlinear with respect to 
the number of degrees of freedom in $\Omega$, $\calN_\Omega$.

To reduce the complexity to nearly linear, in what follows we develop
a sparse tensor product approach.
It is based on the subspace hierarchies
\[
\{ S^1_0(\Omega, \calT^\ell_\beta) \}_{\ell \geq 1}\;,\quad 
\{ S^1_{\{\Y\}}((0,\Y), \calG^{2^{-\ell'}}_{gr,\eta}) \}_{\ell' \geq 1},
\]
where $\{ \calT^\ell_\beta \}_{\ell \geq 1}$ is the nested sequence of bisection--tree meshes in $\Omega$ which are $\beta$-graded toward the corners $\{\bmc\}$ 
in such a way that first-order convergence in $h_\ell = \calO(2^{-\ell})$ is achieved; 
the sequence $\{ \calG^{2^{-\ell'}}_{gr,\eta} \}_{\ell'\geq 1}$ 
consists of nested graded meshes on $[0,\Y]$
that achieve, 
for functions belonging to weighted $H^2$-spaces in $(0,\Y)$, 
as introduced in Theorem \ref{thm:glob_reg_ue},
first order convergence
(cf. the precise statements in~Lemma~\ref{lemma:graded-exponential-mesh}
 and in Corollary~\ref{cor:nested-graded-meshes}).

For $\ell,\ell'\geq 0$, we denote by 
\[
\Pi^\ell_\beta:L^2(\Omega) \to S^1_0(\Omega, \calT^\ell_\beta)
\mbox{ and }
\pi^{1,\ell'}_{\eta,\{\Y\}}: 
   C_{}((0,\Y]) \to S^1_{\{\Y\}}((0,\Y), \calG^{2^{-\ell'}}_{gr,\eta})
\]
the corresponding (quasi)interpolatory projections introduced in Section \ref{S:NtFESpc}.
Define in addition $\Pi^{-1}_\beta := 0$ and $\pi^{1,-1}_{\eta,\{\Y\}} := 0$.
Then, for $L\in {\mathbb N}_0$, we define the \emph{sparse tensor product space} 
as 
\begin{equation}\label{eq:DefsGV}
\hat{\V}^{1,1}_L(\C_\Y)
= 
\sum_{{\ell,\ell'\geq 0},{\ell+\ell'\leq L}}
S^1_0(\Omega, \calT^\ell_\beta) \otimes  S^1_{\{\Y\}}((0,\Y), \calG^{2^{-\ell'}}_{gr,\eta})
\;.
\end{equation}
We immediately comment that the sum in \eqref{eq:DefsGV} is not direct, and 
by zero extension we, evidently, have $\hat{\V}^{1,1}_L(\C_\Y) \subset \HL(y^\alpha, \C)$. 

We define the approximation $\hat{\ue}_L \in \hat{\V}^{1,1}_L(\C_\Y)$ as 
the solution to \eqref{eq:alpha_weak_UhM} with $\hat{\V}^{1,1}_L(\C_\Y)$ taking the role
of $\V_{h,M}$ there. 

\begin{remark}[implementation]
\label{rmk:CmbFrml}
The computation of the sparse tensor FE approximation
$\hat{\ue}_L \in \hat{\V}^{1,1}_L(\C_\Y)$
by directly evaluating 
\eqref{eq:alpha_weak_UhM} 
would require an explicit representation of the 
sparse tensor product subspace $\hat{\V}^{1,1}_L(\C_\Y)$
and therefore, in particular, an explicit basis
for the ``increment spaces'' in \eqref{eq:DefsGV}, 
i.e., for the complements of 
$
S^1_0(\Omega,\calT^{\ell-1}_\beta)$ in $ 
S^1_0(\Omega, \calT^\ell_\beta )$ 
and the complements of 
   $S^1_{\{\Y\}}((0,\Y), \calG^{2^{-{(\ell'-1)}}}_{gr,\eta})$ in  
$S^1_{\{\Y\}}((0,\Y), \calG^{2^{-\ell'}}_{gr,\eta})$.  
Construction of bases for the increment spaces 
is possible, based on ideas from multiresolution analyses. 
We opt, instead, to compute 
$\hat{\ue}_L \in \hat{\V}^{1,1}_L(\C_\Y)$
from the so-called combination formula 
(see, e.g., \cite[Section 4.2, Equation (4.6)]{CombFrml_HPS_2013}).
It is based on anisotropic 
$\HL(y^{\alpha},\C)$-Galerkin projections
\begin{equation}\label{eq:TPAnsio}
G^{1,1}_{\ell,\ell'} := G^1_{h_\ell} \circ G^1_{2^{-\ell'}}:
\HL(y^\alpha;\C)
\to 
{\V}^{1,1}_{h,M}(\calT_{\ell},\calG^{2^{-\ell'}}_{gr,\eta})\;,
\end{equation}
with the semidiscrete projections in \eqref{eq:TPOp}.
The projectors $G^{1,1}_{\ell,\ell'}$ in \eqref{eq:TPAnsio} can be realized with
standard FE bases in $\Omega$ and in $(0,\Y)$.
The combination formula then takes the following form: 
denoting by $\ue_{\ell,\ell'}:= G^{1,1}_{\ell,\ell'}\ue$, there holds, 
with the understanding that $\ue_{-1,j} = 0$ for $j \in {\mathbb N}_0$,
\[
\begin{array}{rcl}
\hat{\ue}_L & = & \displaystyle
\sum_{\ell = 0}^L \left(\ue_{\ell,L-\ell} - \ue_{\ell-1,L-\ell}\right). 
\end{array}
\]
\end{remark}

The convergence of our sparse grids scheme is the content of the next result.

\begin{theorem}[convergence for sparse grids]
\label{thm:sparse} 
Let $\beta \ge 0$ be such that \eqref{eq:wgtapriori} holds. 
Let $1 < \nu < 1+s$. 
Let $\eta (\nu-1) \ge 1$. 
Select $\Y \sim |\log h_L|$ with a sufficiently large implied constant. 
Let $f \in {\mathbb H}^{-s+\nu}(\Omega)$. 
Then the sparse tensor product space $\hat{\V}^{1,1}_L(\C_\Y)$ of \eqref{eq:DefsGV} 
and the corresponding Galerkin approximation
$\hat \ue_{L} \in \hat{\V}^{1,1}_L(\C_\Y)$ to $\ue$ 
satisfy 
\begin{align}
\label{eq:thm:sparse-10} 
\|\nabla( \ue - \hat \ue_{L})\|_{L^2(y^\alpha, \C)} 
&\lesssim h_L |\log h_L| \|f\|_{{\mathbb H}^{-s+\nu}(\Omega)}, 
\\
\label{eq:thm:sparse-20} 
\operatorname*{dim} \hat{\V}^{1,1}_L(\C_\Y) 
&\lesssim \calN_\Omega \log \log \calN_\Omega. 
\end{align}
\end{theorem}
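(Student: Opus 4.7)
\medskip
\noindent\textbf{Proof sketch.}
The plan is to exploit the tensor structure of $\hat{\V}^{1,1}_L(\C_\Y)$ through a sparse hierarchical interpolant of $\ue$. Set $\Pi_\beta^{-1}:=0$ and $\pi^{1,-1}_{\eta,\{\Y\}}:=0$, and introduce the increment operators
\[
Q_\ell^x := \Pi_\beta^\ell - \Pi_\beta^{\ell-1},\qquad
Q_{\ell'}^y := \pi^{1,\ell'}_{\eta,\{\Y\}} - \pi^{1,\ell'-1}_{\eta,\{\Y\}},
\]
and the sparse projector $\hat I_L := \sum_{\ell+\ell'\le L} Q_\ell^x\,Q_{\ell'}^y$, which maps $\HL(y^\alpha,\C_\Y)$ into $\hat{\V}^{1,1}_L(\C_\Y)$ by construction. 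Using Lemma~\ref{lem:GalErr}, the exponential decay \eqref{eq:ExpDec}, and the truncation $\Y\sim|\log h_L|$ with a sufficiently large implied constant, proving \eqref{eq:thm:sparse-10} reduces to bounding $\|\nabla(\ue-\hat I_L\ue)\|_{L^2(y^\alpha,\C_\Y)}$, which by telescoping equals $\bigl\|\nabla\sum_{\ell+\ell'>L} Q_\ell^x Q_{\ell'}^y\ue\bigr\|_{L^2(y^\alpha,\C_\Y)}$.

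\medskip
The core task is then the crossed detail estimate
\[
\|\nabla( Q_\ell^x Q_{\ell'}^y\ue)\|_{L^2(y^\alpha,\C_\Y)} \lesssim 2^{-(\ell+\ell')}\|f\|_{\mathbb H^{-s+\nu}(\Omega)}.
\]
For the $\nabla_{x'}$ component I would couple the $H^1$-approximation of $\Pi_\beta^\ell$ on $H^2_\beta(\Omega)$ (factor $h_\ell$, cf.~\eqref{eq:Piellbeta}) with the first-order $L^2(y^\alpha)$-approximation of $\pi^{1,\ell'}_{\eta,\{\Y\}}$ from Lemma~\ref{lemma:graded-exponential-mesh}(\ref{item:lemma:graded-exponential-mesh-i}), applied at the level of $H^2_\beta(\Omega)$-valued functions with $\theta:=\nu-1\in(0,s)$, so that $\eta\theta\ge 1$ holds by hypothesis; this reduces matters to controlling $\partial_y\ue$ in $L^2(\omega_{\alpha+4-2\nu,\gamma},(0,\infty);H^2_\beta(\Omega))$, which is exactly Proposition~\ref{prop:wgtU} estimate \eqref{eq:H2beta-b} with $\ell=0$ and $\tilde\nu=\nu-1$, whose right-hand side is $\|f\|_{\mathbb H^{1-s+(\nu-1)}}=\|f\|_{\mathbb H^{-s+\nu}}$. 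For the $\partial_y$ component I would pair the $L^2(\Omega)$-approximation of $\Pi_\beta^\ell$ against $H^1_0(\Omega)$ (factor $h_\ell$, available for the $L^2$-projection via Aubin--Nitsche and \eqref{eq:Piellbeta-a}) with the $H^1$-approximation of $\pi^{1,\ell'}_{\eta,\{\Y\}}$ from Lemma~\ref{lemma:graded-exponential-mesh}(\ref{item:lemma:graded-exponential-mesh-ii}) at $j=1$; this reduces to controlling $\nabla_{x'}\partial_y^2\ue$ in $L^2(\omega_{\alpha+4-2\nu,\gamma},\C)$, which is supplied by Theorem~\ref{thm:glob_reg_ue} estimate \eqref{eq:reg_x_grad_mu} with $\ell=1$ and the same $\nu$. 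The main obstacle is this weight bookkeeping: at each step the weight $\omega_{\alpha+2(1-\theta),\gamma}$ demanded on the right of each $y$-approximation must coincide with an admissible weight in one of the $\ue$-regularity estimates under the minimal hypothesis $f\in \mathbb H^{-s+\nu}(\Omega)$; the choice $\theta=\nu-1$ is uniquely forced by this matching, and the hypotheses $\nu>1$ and $\eta(\nu-1)\ge 1$ encode the ``extra'' $x'$-regularity and its consumed share in the $y$-grading. The degenerate boundary cases $\ell=0$ or $\ell'=0$ of the tail sum are handled analogously by setting the corresponding increment factor to the identity and using stability of the complementary projector.

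\medskip
Granted the crossed detail estimate, summing the geometric tail,
\(
\sum_{\ell+\ell'>L} 2^{-(\ell+\ell')}\lesssim (L+1)\,2^{-L}\sim h_L|\log h_L|,
\)
together with \eqref{eq:ExpDec}, yields \eqref{eq:thm:sparse-10}. For \eqref{eq:thm:sparse-20}, shape-regularity of $\calT_\beta^\ell$ gives $\dim S^1_0(\Omega,\calT_\beta^\ell)=\calO(4^\ell)$, and Corollary~\ref{cor:nested-graded-meshes} gives $\dim S^1_{\{\Y\}}((0,\Y),\calG^{2^{-\ell'}}_{gr,\eta})=\calO(2^{\ell'}\log\Y)=\calO(2^{\ell'}\log L)$; summing over the sparse index set,
\[
\dim\hat{\V}^{1,1}_L(\C_\Y)\lesssim \log L\sum_{\ell+\ell'\le L}4^\ell\,2^{\ell'}\lesssim 4^L\log L\sim \calN_\Omega\log\log\calN_\Omega.
\]
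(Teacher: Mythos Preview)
Your proof is correct and follows essentially the same approach as the paper's own proof: the sparse interpolant $\hat I_L$, the telescoped tail sum, the pairing of the $H^1$--approximation of $\Pi^\ell_\beta$ on $H^2_\beta(\Omega)$ with the $L^2(y^\alpha)$--approximation of $\pi^{1,\ell'}_{\eta,\{\Y\}}$ for the $\nabla_{x'}$ part, and the pairing of the $L^2$--approximation of $\Pi^\ell_\beta$ against $H^1_0(\Omega)$ with the $j=1$ case of Lemma~\ref{lemma:graded-exponential-mesh}(\ref{item:lemma:graded-exponential-mesh-ii}) for the $\partial_y$ part, all with $\theta=\nu-1$. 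Your weight bookkeeping (invoking \eqref{eq:H2beta-b} with $\tilde\nu=\nu-1$ and \eqref{eq:reg_x_grad_mu} with $\ell=1$) is spot on; the only cosmetic remark is that the reference to Aubin--Nitsche for \eqref{eq:Piellbeta-a} is unnecessary, since that estimate is assumed directly for the $L^2$--projection.
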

\begin{proof}
We begin by proving \eqref{eq:thm:sparse-20}.
From the condition $\Y \sim |\log h_L| \sim L$, 
we have, by Lemma~\ref{lemma:graded-exponential-mesh}, 
item (\ref{item:lemma:graded-exponential-mesh-0}),
that 
$\#(\calG^{2^{-\ell'}}_{gr,\eta}) 
  \lesssim 2^{\ell'} | \log h_L|  \sim 2^{\ell'} \log L$. 
Consequently,
\begin{equation}\label{eq:DimhatVL}
\dim \hat{\V}^{1,1}_L(\C_\Y) 
\lesssim 
\sum_{{\ell,\ell'\geq 0},{\ell+\ell'\leq L}}
2^{2\ell+\ell'} |\log L| 
\lesssim 
2^{2L} \log L 
\sim 
\calN_\Omega \log \log \calN_\Omega
\;,
\end{equation}
where we have also used that $N_\ell = {\rm dim} (S^1_0(\Omega,\calT^\ell_\beta)) \sim 2^{2\ell}$.

We now study the error of our method. 
From Lemma~\ref{lem:GalErr} and \eqref{eq:ExpDec} 
it suffices to study the best approximation error in $\V^{1,1}_L(\C_\Y)$. 
To do so,
%
we introduce the \emph{sparse tensor product interpolation} projector
\[
  \hat{\Pi}^L_\Y: C((0,\Y]; L^2(\Omega)) \to \hat{\V}^{1,1}_L(\C_\Y)
\]
which is defined by
%
\begin{equation}\label{eq:SPTInterp}
\hat{\Pi}^L_\Y w
:= 
\sum_{{\ell,\ell'\geq 0},{\ell+\ell'\leq L}}
(\Pi^\ell_\beta - \Pi^{\ell-1}_\beta) \otimes (\pi^{1,\ell'}_{\eta,\{\Y\}} - \pi^{1,\ell'-1}_{\eta,\{\Y\}}) w 
\;.
\end{equation}
%

We can now, as in the proof of Theorem \ref{thm:P1Graded}, split the error into
\begin{equation}
\label{eq:tensor-10}
  \begin{aligned}
  \min_{\hat{v}_L \in \hat{\V}_L} \|  \nabla(\ue - \hat{v}_L) \|^2_{L^2(y^{\alpha},\calC_\Y)}
    &\lesssim
  \|  \partial_y (\ue - \hat{\Pi}^L_\Y \ue) \|^2_{L^2(y^{\alpha},\calC_\Y)} \\
  &+
  \|  \nabla_{x'}(\ue - \hat{\Pi}^L_\Y \ue) \|^2_{L^2(y^{\alpha},\calC_\Y)} 
  =:
  I + II \;.
  \end{aligned}
\end{equation}
Each one of these terms can now 
be bounded in the usual sparse grid fashion,
\emph{provided} that $\ue$ has so-called mixed regularity.
To do this we introduce the operators 
\[
Q^\ell_\beta := \Pi^\ell_\beta - \Pi^{\ell-1}_\beta\;,
\quad 
q^{1,\ell'}_\eta:= \pi^{1,\ell'}_{\eta,\{\Y\}} - \pi^{1,\ell'-1}_{\eta,\{\Y\}}\;.
\]

Let us bound term $I$ in \eqref{eq:tensor-10}. 
From the estimate \eqref{eq:reg_x_grad_mu} of 
Theorem~\ref{thm:glob_reg_ue} we infer 
\begin{align}
\label{eq:sparse-10} 
  \|\partial_y^2 \ue \|_{L^2(\omega_{\alpha+2(2-\nu),\gamma},(0,\infty);H^1(\Omega))} &\lesssim 
  \|f\|_{{\mathbb H}^{-s + \nu}(\Omega)}, 
\qquad 0 \leq \nu <  1 + s. 
\end{align}
Of interest to us is the case $1 < \nu < 1 +s < 2$.
Then, with the mesh grading parameter $\eta$ satisfying $\eta (-1+\nu) \ge 1$
and upon assuming that $\Y \ge C L$ for $C > 0$ sufficiently large 
so that the condition \eqref{eq:lemma:GEM-30} is satisfied 
we estimate 
\begin{align*}
I & \leq  
\sum_{\ell+\ell' > L} \| \partial_y (Q^\ell_\beta \otimes q^{1,\ell'}_y \ue) \|_{L^2(y^\alpha,\C_\Y)} 
\\
&\leq  
\sum_{\ell+\ell' > L} 
\| \partial_y 
[((I_{x'}\otimes q^{1,\ell'}_y) \circ (Q^\ell_\beta \otimes I_y) \ue] \|_{L^2(y^{\alpha},\C_\Y)}
\\
&\lesssim
\sum_{\ell+\ell' > L} 
2^{-\ell} \|   \partial_y [(I_{x'}\otimes q^{1,\ell'}_y) \ue] \|_{L^2(y^{\alpha},(0,\Y); H^1(\Omega))},
\end{align*}
where, in the last step, we used the approximation property \eqref{eq:Piellbeta-a}. 
We now apply the estimate \eqref{eq:lemma:GEM-40} with $j=0$, 
$\theta = \nu -1$ and $X = H^1(\Omega)$, to arrive at
\[
  I \lesssim \sum_{\ell+\ell' > L} 
  2^{-\ell-\ell'} \| \partial^2_y \ue \|_{L^2(\omega_{\alpha+2(2-\nu),\gamma},(0,\Y);H^1(\Omega))}
  \lesssim
  L2^{-L}  \| f \|_{\mathbb{H}^{-s+\nu}(\Omega)}\;,
\]
where in the last step we have used the regularity estimate \eqref{eq:sparse-10}.

Let us now bound, using similar arguments, the term $II$ in \eqref{eq:tensor-10}. 
From \eqref{eq:H2beta-a} and \eqref{eq:H2beta-b} we obtain, for $1 \leq \nu < 2-s$, the regularity estimate
\begin{equation}
\label{eq:sparse-20} 
\|\partial_y \ue\|_{L^2(\omega_{\alpha + 2(2-\nu),\gamma},(0,\infty);H^2_\beta(\Omega))} 
\lesssim \|f\|_{{\mathbb H}^{-s + \nu}(\Omega)}. 
\end{equation}
Hence, for $\eta (-1 + \nu) \ge 1$, 
and again under the condition that $\Y \ge C L$ so that \eqref{eq:lemma:GEM-10} is satisfied, 
we can estimate 
\begin{align*}
II & \leq  
\sum_{\ell+\ell' > L} 
\| \nabla_{x'} (Q^\ell_\beta \otimes q^{1,\ell'}_y \ue) \|_{L^2(y^\alpha,\C_\Y)} 
\\
&\leq  
\sum_{\ell+\ell' > L} 
\|  \nabla_{x'}
[((I_{x'}\otimes q^{1,\ell'}_y) \circ (Q^\ell_\beta \otimes I_y) \ue] \|_{L^2(y^\alpha,\C_\Y)}
\\
&\lesssim
\sum_{\ell+\ell' > L} 
2^{-\ell} \|  (I_{x'}\otimes q^{1,\ell'}_y) \ue \|_{L^2(y^\alpha,(0,\Y); H^2_\beta(\Omega))}
\end{align*}
where in the last step we used the approximation properties of $\Pi_\beta^\ell$, 
as stated in \eqref{eq:Piellbeta}. 
The approximation properties of $\pi^{1,\ell'}_{\eta,\{\Y\}}$ given in \eqref{eq:lemma:GEM-20} 
with the regularity estimate of \eqref{eq:sparse-20} allow us to conclude that
\[
  II \lesssim \sum_{\ell+\ell' > L} 
2^{-\ell-\ell'} \| \partial_y \ue \|_{L^2(\omega_{\alpha+2(2-\nu),\gamma},(0,\Y);H^2_\beta(\Omega))}
\lesssim
L2^{-L}  \| f \|_{\mathbb{H}^{-s+\nu}(\Omega)}\;.
\]

Collecting the bounds obtained for $I$ and $II$ yields the result.
\end{proof}

Theorem~\ref{thm:sparse} shows that it is possible to obtain near 
optimal order convergence for fractional diffusion in $\Omega$,
by using only $P_1$-FEM in both $\Omega$ and the extended dimension.
An alternative approach is based on exploiting \emph{analytic regularity} 
of the solution of the extended problem. 
In this case,
\emph{exponentially convergent $hp$-FEM with respect to the extended variable $y$}
will achieve near optimal order for conforming $P_1$-FEM in $\Omega$, 
as observed recently in \cite{MPSV17}, and, as we show (by a different argument) 
in Section~\ref{S:hpFEM}, see Theorem~\ref{thm:hpy-gradedx}.
%

\subsection{$hp$-FEM in $(0,\infty)$ and $P_1$-FEM in $\Omega$}
\label{S:hpFEM}
The discretizations in the preceding Sections \ref{S:sGP1FEM} and \ref{S:TP1FEM}
were of first order in $x'$ and $y$. 
We showed that full tensor product FEM
allows to achieve first order convergence in $\Omega$ 
at the expense of superlinear complexity \eqref{eq:NFT}.
Here, we address the use of the so-called $hp$-FEM in $(0,\Y)$; 
the analytic regularity estimates derived
in Section \ref{S:analytic-regularity} allow us to prove 
\emph{exponential convergence estimates} for corresponding
high-order discretizations in $(0,\Y)$. 
We consider two situations: 
\begin{enumerate}[a)]
\item
   The case where $\bmr$ is a so-called linear degree vector in $(0,\Y)$,  
   which will imply exponential convergence with respect to $y$ 
   (cf.~Lemma~\ref{lemma:semidiscretization-error} below).
   If fixed order FEM on a sequence $\{ \calT^\ell_\beta \}_{\ell \geq 0}$
   of regular, simplicial corner-refined meshes in $\Omega$ are used, 
   near optimal, algebraic convergence rates (with respect to the number $\calN_\Omega$
   of degrees of freedom in $\Omega$) result for the solution of 
   \eqref{fl=f_bdddom} in $\Omega$
   (Theorem~\ref{thm:hpy-gradedx}). 
   We mention \cite{khoromskij-melenk03} where, in a structurally
   similar context, analyticity in the extended variable is also exploited by an $hp$-FEM.
\item
   The case where $\bmr$ is a linear degree vector in $(0,\Y)$, 
   and where we use the $hp$-FEM in $\Omega$; 
   in this case, and under the additional assumption \eqref{eq:AnData}
   of analyticity on the data $c,f,A$, 
   \emph{exponential convergence in terms of the number $\calN_{\Omega,\Y}$
   of degrees of freedom in $\C_\Y$} can be achieved.
   We confine the exposition to $\Omega = (0,1)$ and 
   to $\Omega \subset {\mathbb R}^2$ with analytic boundary. 
   This will be the content of Section~\ref{sec:hpx}.
\end{enumerate}
\subsubsection{A univariate $hp$-interpolation operator}
\label{S:hp-interpolant}
We present here the construction of a univariate interpolation operator that leads to exponential
convergence for analytic functions that may have a singularity at $y = 0$. 
The construction is essentially taken from the 
work by Babu{\v s}ka and collaborators, \cite{babuska-gui86b,BabGuoCurved1988} 
and discussed in the literature on $hp$-FEM 
(see, e.g., \cite[Sec.~{4.4.1}]{phpSchwab1998}, 
 \cite[Thm.~{8}]{apel-melenk17} and also \cite{MPSV17}). 
 
To make matters precise, we consider geometric meshes $\calG^M_{geo,\sigma}$ on $[0,\Y]$ 
with $M$ elements and grading factor 
$\sigma \in (0,1)$: $\{I_i\,|\,i=1,\ldots,M\}$ 
with $I_1 = [0,\Y \sigma^{M-1}]$ and 
$I_i = [\Y \sigma^{M-i+1},\Y \sigma^{M-i}]$ for $i=2,\ldots,M$.
On such meshes, we consider 
a \emph{linear  degree vector} $\bmr$ with slope $\slope$ given by 
\begin{equation}\label{eq:lindeg}
r_i := \max\{1,\lceil \slope i \rceil \}\;,\quad i=1,2,...,M\;.
\end{equation}

We denote by $\widehat K = (-1,1)$ the reference interval. 
We will require a base interpolation operator 
$\widehat \Pi_r: H^1(\widehat K) \rightarrow {\mathbb P}_r(\widehat K)  $
that allows for exponential convergence in $r$ 
for analytic functions with the following two properties: 
\begin{enumerate}[1.]
\item 
$(\widehat \Pi_r \widehat u)(\pm 1 ) = \widehat u(\pm 1)$ for all $\widehat u \in H^1(\widehat K)$. 

\item 
For every $K_u > 0$ there exist $C = C(K_u)$, $b = b(K_u) > 0$ such that 
if, for all $\ell \in {\mathbb N}_0$, we have
$\|\widehat u^{(\ell)}\|_{L^2(\widehat K)} \leq C_u  K_u^{\ell+1} (\ell+1)!$ then 
\begin{equation*}
\|\widehat u - \widehat \Pi_r \widehat u\|_{H^{1}(\widehat K)} \lesssim C_u e^{-b r}  
\qquad \forall r \in {\mathbb N}. 
\end{equation*} 
\end{enumerate}
Classical examples of such operators include the Gauss-Lobatto interpolation operator 
and the ``Babu{\v s}ka-Szab{\'o} operator'' $\Pi^{BS}_r$ as described, e.g., in the 
survey \cite[Example~{13}]{apel-melenk17} or in \cite[Theorem~{3.14}]{phpSchwab1998}. 

With the aid of $\widehat \Pi_r$ we introduce the operators 
$\pi^{\bmr}_y$ and  $\pi^{\bmr}_{y,\{\Y\}}$ on an arbitrary 
mesh $\calG^M$  on $[0,\Y]$ with $M$ elements and polynomial degree distribution 
$\bmr \in {\mathbb N}^M$ in an element-by-element fashion in the usual way below. 
However, for $\pi^{\bmr}_y$ 
we modify the approximation on the first element $I_1 = [0,y_1]$ by interpolating in the points 
$y_1/2$ and $y_1$ instead of the endpoints. 
The operator $\pi^{\bmr}_{y,\{\Y\}}$ is obtained by a further modification 
that enforces $\pi^{\bmr}_{y,\{\Y\}}(\Y) = 0$. 
Specifically, with 
$F_{I_i}: \widehat K \rightarrow I_i$ denoting the affine, 
orientation-preserving element maps for element $I_i \in \calG^M$ 
we have 
\begin{align*}
((\pi^\bmr_{y} u)|_{I_1} \circ F_{I_1})(\xi)  
&= 
2 (u \circ F_{I_1})(1) (\xi - 1/2) + 2 (u \circ F_{I_1})(1/2)(1-\xi), 
\\
((\pi^\bmr_{y} u)|_{I_i} \circ F_{I_i})(\xi)  
&= \widehat \Pi_{\bmr_m} (u\circ F_{I_i}), \qquad i=2,\ldots,M, 
\\ 
(\pi^\bmr_{y,\{\Y\}}  u)|_{I_i} & = (\pi^\bmr_{y} u)|_{I_i}, \qquad i=1,\ldots,M-1, 
\\
((\pi^\bmr_{y,\{\Y\}}  u)|_{I_M} \circ F_{I_M})(\xi) 
& = ((\pi^\bmr_{y} u)|_{I_M} \circ F_{I_M})(\xi)  - (u \circ F_{I_M})(1) (\xi+1)/2
\;. 
\end{align*}

The definition of $\pi^\bmr_y$, $\pi^{\bmr}_{y,\{\Y\}}$ is naturally extended for 
functions $u \in C^0((0,\Y];X)$, where $X$ denotes a Hilbert space. We will apply these 
operators to functions from the following two classes of analytic functions of the extended variable $y$:
%
%
\begin{multline}
\label{eq:Bbeta1}
    {\mathcal B}^1_{\beta,\gamma}(C_u,K_u;X) 
    :=\big \{u \in C^\infty((0,\infty);X)\,:\, \|u\|_{L^2(\omega_{\alpha,\gamma},(0,\infty);X)} < C_u, 
    \\
    \|u^{(\ell+1)}\|_{L^2(\omega_{\alpha+2(\ell + 1)-2\beta,\gamma},(0,\infty);X)} 
                  < C_u K_u^{\ell+ 1} (\ell+ 1)! \quad \forall \ell \in {\mathbb N}_0 \big \}
\end{multline}
and
\begin{multline}
\label{eq:Bbeta2}
    {\mathcal B}^2_{\beta,\gamma}(C_u,K_u,X) 
      := \big \{ u \in C^\infty((0,\infty);X)\,: \,  \\
      \|u\|_{L^2(\omega_{\alpha,\gamma},(0,\infty);X)} 
    + \|u^\prime \|_{L^2(\omega_{\alpha,\gamma},(0,\infty);X)} \leq C_u, 
    \\
     \|u^{(\ell+2)}\|_{L^2(\omega_{\alpha+2(\ell + 1)-2\beta},\gamma},(0,\infty);X)
  \leq C_u K_u^{\ell+2} (\ell+2)! \quad \forall \ell \in {\mathbb N}_0 \big \}.  
\end{multline}
We recall that the weight $\omega_{\beta,\gamma}$ is defined as in \eqref{eq:weight}. 
In the case that $X={\mathbb R}$, we omit the tag $X$ in \eqref{eq:Bbeta1}, \eqref{eq:Bbeta2}.

The approximation properties of the operators $\pi_y^\bmr$ and $\pi_{y,\{\Y\}}^\bmr$ are given below.

\begin{lemma}[exponential interpolation error estimates]
\label{lemma:interpolant-on-geometric-mesh}
Let $\beta \in (0,1]$, $\gamma > 0$, $C_u$, $K_u \ge 0$. 
Let $\sigma \in (0,1)$.  
Then there exists a slope $\slope_{min}>0$  for the degree vector 
such that on the geometric mesh $\calG^M_{geo,\sigma}$ the following estimates hold for 
any polynomial degree distribution $\bmr$ with $\bmr_i \ge 1 + \slope_{min} (i-1)$: 
\begin{enumerate}[(i)]
\item 
If $u \in {\mathcal B}^1_{\beta,\gamma}(C_u,K_u;X)$ and $\sigma^M \Y \leq 1$, then 
\begin{align}
\label{eq:B1beta-estimate-10}
\|u - \pi^\bmr_{y} u\|_{L^2(\omega_{\alpha,\gamma},(0,\Y);X)} & \lesssim C_u  \Y^\beta e^{-b M}, 
\\
\label{eq:B1beta-estimate-20}
\|u - \pi^\bmr_{y,\{\Y\}} u\|_{L^2(\omega_{\alpha,\gamma},(0,\Y);X)} 
& \lesssim C_u  \left(\Y^\beta e^{-b M} + \Y^{-1/2+\beta} e^{-\gamma \Y/2}\right).
\end{align}
\item 
If $u \in {\mathcal B}^2_{\beta,\gamma}(C_u,K_u; X)$ and $\sigma^M \Y \leq 1$, 
then  
\begin{align}
\label{eq:B2beta-estimate-10}
\|(u - \pi^\bmr_{y} u)^\prime\|_{L^2(\omega_{\alpha,\gamma},(0,\Y);X)} & \lesssim C_u  \Y^\beta e^{-b M}, 
\\
\label{eq:B2beta-estimate-20}
\|(u - \pi^\bmr_{y,\{\Y\}} u)^\prime\|_{L^2(\omega_{\alpha,\gamma},(0,\Y);X)} 
& \lesssim C_u  \left(\Y^\beta e^{-b M} 
+ \Y^{-3/2+\beta} e^{-\gamma \Y/2}\right).
\end{align}
\end{enumerate}
In all the estimates, 
the hidden constant and $b$ depend only on $\beta$, $\gamma$, $\alpha$, $\sigma$, and $K_u$.
\end{lemma}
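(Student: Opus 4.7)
The plan is to decompose $(0,\Y)$ into three pieces: the first element $I_1=[0,\Y\sigma^{M-1}]$ where we use a low-order linear correction, the geometric elements $I_i$ for $i=2,\ldots,M$ where the standard $hp$ approximation machinery applies, and, for $\pi^\bmr_{y,\{\Y\}}$, an additional boundary correction on $I_M$ to enforce the homogeneous condition at $y=\Y$. Each part will be analyzed separately and then summed.

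On $I_1$, the function $u$ is only known to behave like a power $y^{\beta}$ near the origin (encoded through the weighted $L^2$ norms defining $\mathcal{B}^1_{\beta,\gamma}$ and $\mathcal{B}^2_{\beta,\gamma}$), so high-order polynomial approximation is of no use. Instead I would mimic the argument used in Lemma~\ref{lemma:graded-exponential-mesh} on its first element: a scaling argument and the singular estimate \eqref{eq:lemma:I1-10} (with $\delta=1-\beta$) yields, in the $\mathcal{B}^1_{\beta,\gamma}$ case,
\begin{equation*}
\|u-\pi^\bmr_y u\|_{L^2(\omega_{\alpha,\gamma},I_1;X)}
\lesssim |I_1|^{\beta}\,\|u'\|_{L^2(\omega_{\alpha+2(1-\beta),\gamma},I_1;X)}
\lesssim C_u (\Y\sigma^{M-1})^{\beta},
\end{equation*}
and analogously for the $H^1$-type estimate in the $\mathcal{B}^2_{\beta,\gamma}$ class. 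Under the assumption $\sigma^M\Y\le 1$, $(\Y\sigma^{M-1})^{\beta}\lesssim \Y^\beta e^{-b'M}$ for $b'=\beta|\log\sigma|$, so this piece is already exponentially small in $M$.

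On each interior element $I_i=[\Y\sigma^{M-i+1},\Y\sigma^{M-i}]$ with $i\ge 2$, I would pull back to $\widehat K$ via $F_{I_i}$ and apply $\widehat{\Pi}_{r_i}$. The key estimate is that the pulled-back function $\widehat u_i:=u\circ F_{I_i}$ satisfies uniform analytic derivative bounds: inserting the power-weighted estimates of $\mathcal{B}^{1,2}_{\beta,\gamma}$, exploiting that $y\sim\Y\sigma^{M-i}$ is essentially constant on $I_i$ and that $|I_i|\sim \Y\sigma^{M-i}(1-\sigma)$, one obtains $\|\widehat u_i^{(\ell)}\|_{L^2(\widehat K)}\lesssim \widetilde C_u (\widetilde K_u)^\ell \ell!$ with $\widetilde K_u$ depending only on $K_u$ and $\sigma$, uniformly in $i$ and $M$. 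The analytic approximation property of $\widehat\Pi_r$ then gives $\|\widehat u_i-\widehat\Pi_{r_i}\widehat u_i\|_{H^1(\widehat K)}\lesssim \widetilde C_u e^{-b r_i}$, and after rescaling back to $I_i$ and inserting the weight $\omega_{\alpha,\gamma}$ (which is essentially constant on $I_i$ because $\gamma y\le \gamma\Y\sigma^{M-i}\lesssim 1$ for $i$ large and controlled by $e^{-\gamma\Y}$ for small $i$) one gets a bound of the form $C_u (\Y\sigma^{M-i})^\beta e^{-b r_i}$. Summing the geometric series $\sum_{i=2}^M \sigma^{(M-i)\beta}e^{-b r_i}$ under the linear degree condition $r_i\ge 1+\slope_{\min}(i-1)$ then yields $\lesssim \Y^{\beta}e^{-bM}$ provided $\slope_{\min}$ is chosen large enough (depending on $\sigma$ and $\beta$) so that the factor $\sigma^{-\beta}e^{-b\slope_{\min}}<1$; this is the classical balance in $hp$-analysis on geometric meshes.

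For the variant $\pi^\bmr_{y,\{\Y\}}$, the correction subtracted on $I_M$ is a linear function of magnitude $|u(\Y)|$. A standard one-dimensional Sobolev embedding argument applied with the exponential weight (akin to Lemma~\ref{lemma:inftyExp}) shows that in the $\mathcal{B}^1_{\beta,\gamma}$ class $|u(\Y)|\lesssim C_u\, \Y^{-1/2+\beta}e^{-\gamma\Y/2}$ as a Hilbert-space-valued quantity, with an extra factor $\Y^{-1}$ in the $\mathcal{B}^2_{\beta,\gamma}$ case by one more integration. Integrating against $y^\alpha e^{\gamma y}$ on $I_M$ (whose length is $\Y(1-\sigma)$) produces exactly the advertised terms $\Y^{-1/2+\beta}e^{-\gamma\Y/2}$ and $\Y^{-3/2+\beta}e^{-\gamma\Y/2}$ in \eqref{eq:B1beta-estimate-20} and \eqref{eq:B2beta-estimate-20}. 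The main obstacle will be the bookkeeping in the interior-element step: making the scaling argument rigorous requires tracking how the weighted $L^2$ norms in the definitions of $\mathcal{B}^1_{\beta,\gamma}$ and $\mathcal{B}^2_{\beta,\gamma}$ transform under $F_{I_i}$ and confirming that the resulting constant $\widetilde K_u$ is truly independent of $i$, so that the exponential rate in $r_i$ does not deteriorate on elements near the singularity and the geometric-plus-linear-degree combination produces the uniform $e^{-bM}$ decay.
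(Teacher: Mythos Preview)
Your proposal is correct and matches the paper's proof in Appendix~\ref{S:proof-interpolant-geo-mesh} essentially step for step: the three-part decomposition into $I_1$ (low-order weighted estimate via Lemma~\ref{lemma:I1}), the interior geometric elements (pullback plus the analytic approximation property of $\widehat\Pi_r$), and the boundary correction on $I_M$ (via Lemma~\ref{lemma:inftyExp}) is exactly what the paper does. The one place where the paper's bookkeeping is tighter is the interior sum: rather than bounding each element by the global $C_u$ and relying on the geometric series, the paper introduces localized constants $C_i^2:=\sum_{\ell\ge 1}(2K_u)^{-\ell}\ell!^{-2}\|u^{(\ell)}\|^2_{L^2(\omega_{\alpha+2\ell-2\beta,\gamma},I_i)}$ with $\sum_i C_i^2\le 2C_u^2$, which already carry the weight $e^{\gamma y}$ and thereby sidestep your (not quite correct) parenthetical that $\omega_{\alpha,\gamma}$ is ``essentially constant'' on elements near $\Y$.
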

\begin{proof}
See Appendix~\ref{S:proof-interpolant-geo-mesh}. 
\end{proof}
\subsubsection{$hp$-discretization in $y$ and $P_1$ FEM in $\Omega$}
With the $hp$-approximation operator $\pi^\bmr_y$ of the previous section at 
hand, we can analyze the properties of the space 
$\V^{1,\bmr}_{h,M}(\calT^\ell_\beta,\calG^M_{geo,\sigma})$. 
The following result generalizes \cite{MPSV17} 
in that we allow for a general elliptic operator $\calL$ and 
in that the appropriate mesh grading in $\Omega$ is included 
to compensate for the lack of a full elliptic shift theorem.

\begin{theorem}[error estimates]
\label{thm:hpy-gradedx}
Let $u\in \mathbb{H}^s(\Omega)$ and $\ue\in\HL(y^\alpha,\C)$ solve \eqref{fl=f_bdddom} and 
\eqref{alpha_harm_intro}, respectively, 
with $f\in \mathbb{H}^{1-s}(\Omega)$ and $\Omega\subset {\mathbb R}^2$ 
a bounded polygon with straight sides and (a finite set of) corners $\{ \bmc \}$.
Let $\beta \ge 0$ be such that \eqref{eq:wgtapriori} holds and 
let $\{\calT^\ell_\beta\}_{\ell}$ be a sequence of graded meshes 
that satisfy \eqref{eq:Piellbeta-a} and \eqref{eq:Piellbeta}.
Let $\calG^M_{geo,\sigma}$ be a geometric mesh on $(0,\Y)$ with $\Y \sim |\log h_\ell|$ 
with a sufficiently large constant. 
Let $\ue_{h_\ell,M}$ be the solution of \eqref{eq:alpha_weak_UhM} 
over the space $\V^{1,\bmr}_{h,M}(\calT^\ell_\beta,\calG^M_{geo,\sigma})$.
Then there exists a minimal slope $\slope_{min}$ independent of $h_{\ell}$ and $f$ 
such that for linear degree vectors $\bmr$
with slope $\slope \ge \slope_{min}$ there holds 
\begin{equation}
\label{eq:thm:hpy-graded} 
\| u - \tr \ue_{h,M} \|_{\Hs} \lesssim \|\nabla(\ue - \ue_{h_\ell,M}) \|_{L^2(y^\alpha,\C)} 
\lesssim h_\ell \|f\|_{{\mathbb H}^{1-s}(\Omega)}. 
\end{equation}
In addition, the total number of degrees of freedom behaves like
\[
\dim \V^{1,\bmr}_{h,M}(\calT^\ell_\beta,\calG^M_{geo,\sigma}) 
\sim
\calN_{\Omega,\Y} \sim M^2 h_\ell^{-2} \sim h_\ell^{-2} (\log h_\ell)^2 
\sim \calN_\Omega \log \calN_\Omega,
\]
where $\calN_\Omega = \# \calT_\beta^\ell$. 
More generally, if $f \in {\mathbb H}^{\sigma-s}(\Omega)$ for $\sigma \in [0,1]$, 
then the bound \eqref{eq:thm:hpy-graded} takes the form 
\[
\| u - \tr \ue_{h,M} \|_{\Hs} \lesssim
\|\nabla(\ue - \ue_{h_\ell,M}) \|_{L^2(y^\alpha,\C)} 
\lesssim h_\ell^\sigma \|f\|_{{\mathbb H}^{\sigma-s}(\Omega)}.\]
\end{theorem}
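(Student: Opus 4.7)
The plan is to follow the template of Theorem~\ref{thm:P1Graded}, replacing the piecewise linear graded interpolant in $y$ by the $hp$-interpolant $\pi^\bmr_{y,\{\Y\}}$ on $\calG^M_{geo,\sigma}$ from Section~\ref{S:hp-interpolant}. First I would apply Lemma~\ref{lem:GalErr} to reduce the Galerkin error to the best approximation error on $\C_\Y$ plus the truncation tail, and invoke \eqref{eq:ExpDec} with $\Y \sim |\log h_\ell|$ and a sufficiently large implied constant to absorb that tail into $h_\ell \|f\|_{\mathbb{H}^{1-s}(\Omega)}$.

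For the best approximation I would take as candidate the tensor product $v_{h,M} := (\Pi^\ell_\beta \otimes \pi^\bmr_{y,\{\Y\}}) \ue \in \V^{1,\bmr}_{h,M}(\calT^\ell_\beta,\calG^M_{geo,\sigma})$. Using the decomposition $(I - \Pi^\ell_\beta \otimes \pi^\bmr_{y,\{\Y\}})\ue = (I - \Pi^\ell_\beta)\ue + \Pi^\ell_\beta(I - \pi^\bmr_{y,\{\Y\}})\ue$ together with the uniform $L^2$- and $H^1$-stability of $\Pi^\ell_\beta$ and the fact that $\Pi^\ell_\beta$ commutes with $\partial_y$ (acting only on $x'$), the error splits into an $x'$-component and a $y$-component in the style of Lemma~\ref{L:ErrSplt}. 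The $x'$-component is treated exactly as in the proof of Theorem~\ref{thm:P1Graded}: the $\nabla_{x'}$-part is controlled via \eqref{eq:Piellbeta} and the weighted Kondrat'ev regularity \eqref{eq:H2beta-a} at $\nu'=0$, while the $\partial_y$-part is controlled via \eqref{eq:Piellbeta-a} combined with \eqref{eq:reg_x_grad_mu} for a suitable $\nu$. Both yield the target bound $\lesssim h_\ell \|f\|_{\mathbb{H}^{1-s}(\Omega)}$.

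The core novel step is the $y$-component, where I must verify that $\ue$, regarded as a function of $y$ valued in $L^2(\Omega)$ (respectively, $H^1_0(\Omega)$), belongs to the analytic class $\mathcal{B}^2_{\beta,\gamma}$ of \eqref{eq:Bbeta2} (respectively, $\mathcal{B}^1_{\beta,\gamma}$ of \eqref{eq:Bbeta1}) for some admissible $\beta \in (0,1]$. Fixing $\beta \in (0, \min\{s,1\})$, estimate \eqref{eq:reg_y_l} of Theorem~\ref{thm:glob_reg_ue} (after reindexing $\ell \mapsto \ell+1$ and taking $\tilde\nu = \beta$) delivers exactly the defining inequality of $\mathcal{B}^2_{\beta,\gamma}$ with $X = L^2(\Omega)$ and $C_u \lesssim \|f\|_{\mathbb{H}^{-s+\beta}(\Omega)}$, while \eqref{eq:reg_x_grad_mu} at $\nu=\beta$ identifies $\ue$ as a member of $\mathcal{B}^1_{\beta,\gamma}$ with $X = H^1_0(\Omega)$. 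Provided the slope of $\bmr$ exceeds $\slope_{\min}$ and $M$ is chosen so that $\sigma^M \Y \leq 1$, Lemma~\ref{lemma:interpolant-on-geometric-mesh}(ii) with $X=L^2(\Omega)$ bounds $\|\partial_y(I-\pi^\bmr_{y,\{\Y\}})\ue\|_{L^2(y^\alpha,\C_\Y)}$ and Lemma~\ref{lemma:interpolant-on-geometric-mesh}(i) with $X=H^1_0(\Omega)$ bounds $\|\nabla_{x'}(I-\pi^\bmr_{y,\{\Y\}})\ue\|_{L^2(y^\alpha,\C_\Y)}$, in both cases by $C\bigl(\Y^\beta + \Y^{-1/2+\beta}e^{-\gamma \Y/2}\bigr) e^{-bM}\|f\|_{\mathbb{H}^{1-s}(\Omega)}$. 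Choosing $M \sim |\log h_\ell|$ then makes $e^{-bM} \lesssim h_\ell$ and absorbs the polylogarithmic factor $\Y^\beta$ into the constant.

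The dimension count is elementary: a linear degree vector on $\calG^M_{geo,\sigma}$ produces $\dim S^\bmr_{\{\Y\}}((0,\Y),\calG^M_{geo,\sigma}) = \calO(M^2)$, whereas $\calN_\Omega = \dim S^1_0(\Omega,\calT^\ell_\beta) \sim h_\ell^{-2}$, so $\calN_{\Omega,\Y} \sim h_\ell^{-2}(\log h_\ell)^2 \sim \calN_\Omega \log \calN_\Omega$. The reduced regularity bound for $f \in \mathbb{H}^{\sigma-s}(\Omega)$ follows by interpolation exactly as in Corollary~\ref{cor:P1Graded}. I expect the principal obstacle to be the index bookkeeping needed to align the weighted analyticity bounds of Theorem~\ref{thm:glob_reg_ue} with the definitions of $\mathcal{B}^1_{\beta,\gamma}$ and $\mathcal{B}^2_{\beta,\gamma}$ within the admissible range $\beta \in (0,1]$ mandated by Lemma~\ref{lemma:interpolant-on-geometric-mesh}, and simultaneously maintaining a uniform rate in $s \in (0,1)$.
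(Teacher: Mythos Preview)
Your proposal is correct and follows essentially the same route as the paper: the error splitting of Lemma~\ref{L:ErrSplt}, the $x'$-part handled exactly as in Theorem~\ref{thm:P1Graded}, and the $y$-part treated by verifying the analytic regularity classes ${\mathcal B}^1_{\beta,\gamma}$, ${\mathcal B}^2_{\beta,\gamma}$ via Theorem~\ref{thm:glob_reg_ue} and invoking Lemma~\ref{lemma:interpolant-on-geometric-mesh}. The paper packages the $y$-semidiscretization argument into the separate Lemma~\ref{lemma:semidiscretization-error} (where the inclusion \eqref{eq:reg_Bj} is precisely your index bookkeeping), but the substance is identical; your only additional care is to absorb the factor $\Y^\beta$ into a slightly smaller exponential rate $b'<b$, which the paper handles the same way.
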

\begin{proof}
The starting point is again the error decomposition \eqref{eq:ErrSplt}. 
The univariate $hp$-interpolation
operator $\pi^{\bmr}_y$ constructed in Section~\ref{S:hp-interpolant} makes the  
semidiscretization error $\ue - \pi^\bmr_y \ue$ in $y$ exponentially small in $M$  
(see Lemma~\ref{lemma:semidiscretization-error} below for details). 
In turn, the assumption $M \sim |\log h_\ell|$ implies 
any desired algebraic convergence in $h_\ell$ by suitably selecting the implied constant. 
On the other hand, the error $\ue - \Pi^q_{x'} \ue$ in \eqref{eq:ErrSplt} 
is controlled as in the proof of Theorem~\ref{thm:P1Graded}.

Finally, the estimate for $f \in \mathbb{H}^{\sigma-s}(\Omega)$ follows by interpolation.
\end{proof}

\section{Diagonalization: semidiscretization in $y$}
\label{S:diagonalization-abstract-setting}
%
We now explore the possibilities offered by a semidiscretization in $y$. 
We will observe, among other things, that this leads to a sequence of 
decoupled singularly perturbed, \emph{linear second order}
elliptic problems in $\Omega$.

For an arbitrary mesh $\calG^M$ on $[0,\Y]$ and for a 
polynomial degree distribution $\bmr$, we 
consider the following $y$-semidiscrete problem: 
Find $\ue_M \in \V^\bmr_M(\calC_\Y)$ such that
\begin{equation}
\label{eq:semidiscrete-galerkin-formulation}
\blfa{\C}(\ue_M,\phi) = 
d_s\langle f, \tr \phi \rangle 
\qquad \forall \phi \in \V^\bmr_M(\C_\Y),
\end{equation}
where $\V^\bmr_M(\C_\Y)$ is defined as in \eqref{eq:xySemiDis} and 
is a closed subspace of $\HL(y^{\alpha},\C)$. In what follows we obtain 
an explicit formula for $\ue_M$. 
To accomplish this,  we consider the following eigenvalue problem: 
Find $(v,\mu) \in S^\bmr_{\{\Y\}}( (0,\Y), \calG^M) \setminus\{0\} \times {\mathbb R}$ such that 
\begin{equation}
\label{eq:eigenvalue-problem}
\mu \int_0^\Y y^\alpha v^\prime(y) w^\prime(y)\, \diff y = 
        \int_0^\Y y^\alpha v(y) w(y)\, \diff y  
\qquad \forall w \in S^\bmr_{\{\Y\}}( (0,\Y), \calG^M),
\end{equation} 
where $S^\bmr_{\{\Y\}}( (0,\Y), \calG^M)$ is defined as in Section \ref{S:NtFESpc}. 
All eigenvalues $\mu$ are positive, and the space $S^\bmr_{\{\Y\}}((0,\Y),\calG^M)$ has an eigenbasis 
$(v_i)_{i=1}^\calM$, with $\calM:= \dim S^\bmr_{\{\Y\}}( (0,\Y), \calG^M)$, such that,
for $i$, $j \in \{1,\ldots,\calM \}$,
\begin{equation}
\label{eq:eigenbasis-normal}
\int_0^\Y y^\alpha v_i^\prime(y) v_j^\prime(y)\, \diff y = \delta_{i,j}, 
\qquad 
\int_0^\Y y^\alpha v_i(y) v_j(y)\, \diff y = \mu_i \delta_{i,j}. 
\end{equation}
We now write
$\ue_M(x',y):= \sum_{j=1}^\calM U_j(x') v_j(y)$ and consider
$\phi(x',y)=V(x')v_i(y)$, with $V\in H^1_0(\Omega)$ as a test function, in 
\eqref{eq:semidiscrete-galerkin-formulation}. This yields the following
system of \emph{decoupled} problems for $i=1,\ldots,\calM$: 
Find $U_i \in H^1_0(\Omega)$ such that
\begin{align}
\label{eq:decoupled-problems}
\blfa{\mu_i,\Omega}(U_i,V) = d_s v_i(0) \langle f,V\rangle
\qquad \forall V \in H^1_0(\Omega),
\end{align}
where
\[
\blfa{\mu_i,\Omega}(U,V):= \mu_i \blfa{\Omega}(U,V) + \int_\Omega U V \diff x',
\]
and $\blfa{\Omega}$ is introduced in \eqref{eq:blfOmega}.
An important observation is that, for functions of the form 
$Z(x',y)= \sum_{i=1}^\calM V_i(x') v_i(y)$ with $V_i\in H^1_0(\Omega)$,  we have
the equality
\begin{equation}
\label{eq:pythagoras}
\blfa{\C}(Z,Z)  = 
\blfa{\C_\Y}(Z,Z)  = 
\sum_{i=1}^\calM \|V_i\|^2_{\mu_i,\Omega}, 
\qquad \|V\|^2_{\mu_i,\Omega}:= \blfa{\mu_i,\Omega}(V,V). 
\end{equation}

To obtain a fully discrete scheme, select a mesh $\calT$ on $\Omega$ and the corresponding space 
$S^q_0(\Omega,\calT)$ and let $\Pi_i:H^1_0(\Omega) \rightarrow S^q_0(\Omega,\calT)$ be the Ritz projectors 
for the bilinear forms $\blfa{\mu_i,\Omega}$: 
\begin{equation}
\label{eq:galerkin-for-vi} 
\blfa{\mu_i,\Omega}(u - \Pi_i u,v) = 0 \qquad \forall v \in S^q_0(\Omega,\calT). 
\end{equation}

With this notation at hand, we can formulate an explicit representation of the 
Galerkin approximation $\ue_{h,M} \in S^q_0(\Omega,\calT) \otimes S^\bmr_{\{\Y\}}(\calG^M)$ 
to $\ue$ as well as an error representation.

\begin{lemma}[error representation]
\label{lemma:decoupled-problems}
Let $(\mu_i,v_i)_{i=1}^\calM$ be the eigenpairs given by 
\eqref{eq:eigenvalue-problem}, \eqref{eq:eigenbasis-normal}. 
Let $U_i \in H^1_0(\Omega)$ be the solution to \eqref{eq:decoupled-problems} 
and $\Pi_i:H^1_0(\Omega) \rightarrow S^q_0(\Omega,\calT)$ given as in \eqref{eq:galerkin-for-vi}.  
Let $\ue_M$ be the solution to the semidiscrete problem \eqref{eq:semidiscrete-galerkin-formulation}.
Then the Galerkin approximation 
$\ue_{h,M} \in S^q_0(\Omega,\calT) \otimes S^\bmr_{\{\Y\}}(\calG^M)$ to $\ue$ 
satisfies 
\begin{align}
\label{eq:representation-by-reaction-diffusion-problems}
\ue_{h,M}(x',y) & = \sum_{i=1}^{\calM} \Pi_i U_i(x') v_i(y), \\
\label{eq:error-representation}
\blfa{\C}(\ue_M - \ue_{h,M}, \ue_M - \ue_{h,M}) &= 
\sum_{i=1}^{\calM} \|U_i - \Pi_i U_i\|^2_{\mu_i,\Omega}. 
\end{align}
\end{lemma}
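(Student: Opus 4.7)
The plan is to exploit the fact that the eigenfunctions $(v_i)_{i=1}^\calM$ from \eqref{eq:eigenvalue-problem} form a basis of $S^\bmr_{\{\Y\}}((0,\Y),\calG^M)$, which induces the direct sum decompositions
\[
\V^\bmr_M(\C_\Y) = \bigoplus_{i=1}^\calM H^1_0(\Omega) \otimes \mathrm{span}\{v_i\}, \qquad
S^q_0(\Omega,\calT) \otimes S^\bmr_{\{\Y\}}((0,\Y),\calG^M) = \bigoplus_{i=1}^\calM S^q_0(\Omega,\calT) \otimes \mathrm{span}\{v_i\}.
\]
With this at hand, I would begin by writing the ansatz $\ue_{h,M}(x',y) = \sum_{i=1}^\calM W_i(x') v_i(y)$ with unknowns $W_i \in S^q_0(\Omega,\calT)$, and test \eqref{eq:alpha_weak_UhM} against $\phi(x',y) = V(x') v_j(y)$ for arbitrary $V \in S^q_0(\Omega,\calT)$ and $j \in \{1,\ldots,\calM\}$.

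Second, I would expand the bilinear form $\blfa{\C_\Y}$ from \eqref{eq:blf-A-truncated} using $\bA = \diag\{A,1\}$, so that $\nabla = (\nabla_{x'},\partial_y)$ separates variables. The key calculation is that the $y$-integrals decouple via the orthogonality relations \eqref{eq:eigenbasis-normal}: namely $\int_0^\Y y^\alpha v_i' v_j' \diff y = \delta_{ij}$ and $\int_0^\Y y^\alpha v_i v_j \diff y = \mu_i \delta_{ij}$. This collapses the double sum to the single index $j$ and yields
\[
\mu_j \int_\Omega (A \nabla_{x'} W_j \cdot \nabla_{x'} V + c W_j V) \diff x' + \int_\Omega W_j V \diff x' = \blfa{\mu_j,\Omega}(W_j,V).
\]
The right-hand side $d_s \langle f, \tr \phi\rangle$ evaluates to $d_s v_j(0) \langle f, V\rangle$ because $\tr(V \otimes v_j) = v_j(0) V$. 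Comparing with \eqref{eq:decoupled-problems} and \eqref{eq:galerkin-for-vi}, we read off $W_j = \Pi_j U_j$, which proves \eqref{eq:representation-by-reaction-diffusion-problems}. The same argument, carried out with test functions $V \in H^1_0(\Omega)$ rather than in $S^q_0(\Omega,\calT)$, reconfirms the semidiscrete identity $\ue_M(x',y) = \sum_i U_i(x') v_i(y)$.

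Finally, for \eqref{eq:error-representation} I would observe that $\ue_M - \ue_{h,M} = \sum_{i=1}^\calM (U_i - \Pi_i U_i)(x') v_i(y)$, which has exactly the structure required by the Pythagorean identity \eqref{eq:pythagoras}. Direct application of \eqref{eq:pythagoras} with $V_i := U_i - \Pi_i U_i \in H^1_0(\Omega)$ delivers the claim; note that $\blfa{\C}(\ue_M - \ue_{h,M},\cdot) = \blfa{\C_\Y}(\ue_M - \ue_{h,M},\cdot)$ since the difference is supported in $\C_\Y$.

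I do not anticipate a serious obstacle: the lemma is essentially a diagonalization identity, and the only mild subtlety is verifying, via \eqref{eq:eigenbasis-normal}, that the cross terms between distinct $v_i$ vanish in both the gradient-in-$y$ part and the $L^2(y^\alpha)$ mass part of $\blfa{\C_\Y}$. The decoupling of the $x'$-gradient/mass part likewise hinges on $\int_0^\Y y^\alpha v_i v_j \diff y = \mu_i \delta_{ij}$, and it is the simultaneous diagonalizability of the two $y$-forms by the same basis $(v_i)$ that makes the whole argument work cleanly.
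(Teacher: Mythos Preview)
Your proof is correct and follows essentially the same approach as the paper: the paper's own proof is extremely terse, merely stating that \eqref{eq:representation-by-reaction-diffusion-problems} follows from \eqref{eq:decoupled-problems} and \eqref{eq:galerkin-for-vi} while \eqref{eq:error-representation} is a consequence of \eqref{eq:pythagoras}, which is exactly the route you take in detail. Your explicit verification that the cross terms vanish via \eqref{eq:eigenbasis-normal} and your remark that $\blfa{\C}$ and $\blfa{\C_\Y}$ agree on functions supported in $\C_\Y$ simply make explicit what the paper leaves implicit.
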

\begin{proof}
Expression \eqref{eq:representation-by-reaction-diffusion-problems}
follows from \eqref{eq:decoupled-problems} and
\eqref{eq:galerkin-for-vi},
whereas \eqref{eq:error-representation} is a consequence of
\eqref{eq:pythagoras}.
\end{proof}

We next show that the semidiscretization error $\ue - \ue_M$ 
can be made exponentially small on geometric meshes $\calG^M_{geo,\sigma}$.

\begin{lemma}[exponential convergence]
\label{lemma:semidiscretization-error} 
Let $f \in {\mathbb H}^{-s+\nu}(\Omega)$ for $\nu \in (0,s)$. Let $c_1 M \leq \Y \leq c_2 M$. 
Consider the geometric mesh $\calG^M_{geo,\sigma}$ on $(0,\Y)$.  
Then there exist $C$, $\slope_{min}$, 
$b > 0$ (depending solely on $s$, $\calL$, $c_1$, $c_2$, $\sigma$, $\nu$)
such that for any linear degree $\bmr$ with 
slope $\slope \ge \slope_{min}$ there holds 
\begin{equation}
\label{eq:lemma:semidiscretization-error-10}
\|\nabla (\ue - \ue_M)\|_{L^2(y^\alpha,\C)} \leq C e^{-b M} \|f\|_{{\mathbb H}^{-s+\nu}(\Omega)}.
\end{equation}
\end{lemma}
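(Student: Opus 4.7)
The plan is to combine the usual C\'ea quasi-optimality for the Galerkin semidiscretization \eqref{eq:semidiscrete-galerkin-formulation} with the exponential interpolation estimates of Lemma~\ref{lemma:interpolant-on-geometric-mesh} applied to $\pi^\bmr_{y,\{\Y\}}\ue$, using the weighted analytic regularity of $\ue$ from Theorem~\ref{thm:glob_reg_ue}. First I would split, exactly as in Section~\ref{S:ErrSplt},
\[
\|\nabla(\ue - \ue_M)\|_{L^2(y^\alpha,\C)}
\lesssim
\inf_{v \in \V^\bmr_M(\C_\Y)} \|\nabla(\ue - v)\|_{L^2(y^\alpha,\C_\Y)}
+ \|\nabla \ue\|_{L^2(y^\alpha,\C\setminus \C_\Y)}.
\]
The truncation term is handled by \eqref{eq:ExpDec}, which gives a contribution bounded by $\exp(-\gamma \Y/2)\|f\|_{{\mathbb H}^{-s}(\Omega)}$; since $\Y \ge c_1 M$, this is exponentially small in $M$ (and dominated by $\|f\|_{{\mathbb H}^{-s+\nu}(\Omega)}$ because $\nu>0$).

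For the semidiscretization term, I would take $v := \pi^\bmr_{y,\{\Y\}}\ue \in \V^\bmr_M(\C_\Y)$, where $\pi^\bmr_{y,\{\Y\}}$ is the univariate $hp$ operator from Section~\ref{S:hp-interpolant} applied to the Hilbert-space-valued map $y \mapsto \ue(\cdot,y)$. Since $\pi^\bmr_{y,\{\Y\}}$ acts only in $y$ and commutes with the spatial gradient $\nabla_{x'}$, the error splits as
\[
\|\nabla(\ue - \pi^\bmr_{y,\{\Y\}}\ue)\|_{L^2(y^\alpha,\C_\Y)}^2
= \|(\ue - \pi^\bmr_{y,\{\Y\}}\ue)'\|_{L^2(y^\alpha,(0,\Y);L^2(\Omega))}^2
+ \|(I - \pi^\bmr_{y,\{\Y\}})\nabla_{x'}\ue\|_{L^2(y^\alpha,(0,\Y);L^2(\Omega))}^2,
\]
where $'$ denotes $\partial_y$. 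Each summand is now a one-dimensional $hp$ interpolation error for a Hilbert-valued analytic function.

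The main technical step, and the only delicate one, is to verify that $\ue$ and $\nabla_{x'}\ue$ lie in the weighted analyticity classes of Lemma~\ref{lemma:interpolant-on-geometric-mesh}. Fix $\beta := \nu \in (0,s) \subset (0,1]$ and any $\gamma \in (0, 2\sqrt{\lambda_1})$. Applying the bounds \eqref{eq:reg_y_l}, \eqref{eq:reg_x_grad_mu}, \eqref{eq:reg_x}, and \eqref{eq:reg_L2} of Theorem~\ref{thm:glob_reg_ue} with the parameter $\nu$ of the current lemma (noting $0\le \nu < s < 1+s$ and $0\le \nu < 1-s$ may require a small perturbation, but in any case $\nu < s$ yields admissible exponents), I obtain, with a common $\kappa > 1$ and $C_u \lesssim \|f\|_{{\mathbb H}^{-s+\nu}(\Omega)}$,
\[
\ue \in {\mathcal B}^2_{\nu,\gamma}(C_u,\kappa;L^2(\Omega)),
\qquad
\nabla_{x'}\ue \in {\mathcal B}^1_{\nu,\gamma}(C_u,\kappa;L^2(\Omega)).
\]
Verifying the first class requires estimates on $\partial_y^{\ell+2}\ue$ with weight $\omega_{\alpha+2(\ell+1)-2\nu,\gamma}$, which is \eqref{eq:reg_y_l} after reindexing; verifying the second requires estimates on $\nabla_{x'}\partial_y^{\ell+1}\ue$ with weight $\omega_{\alpha+2(\ell+1)-2\nu,\gamma}$, which is exactly \eqref{eq:reg_x_grad_mu}. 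The $L^2$ base-level bounds come from \eqref{eq:reg_L2} and \eqref{eq:reg_x}.

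Finally, with $\slope \ge \slope_{min}$ as in Lemma~\ref{lemma:interpolant-on-geometric-mesh} and $\sigma^M \Y \le 1$ (guaranteed by $\Y \le c_2 M$ upon enlarging $M$ or adjusting $\sigma$), the estimates \eqref{eq:B2beta-estimate-20} and \eqref{eq:B1beta-estimate-20} give
\[
\|(\ue - \pi^\bmr_{y,\{\Y\}}\ue)'\|_{L^2(y^\alpha,(0,\Y);L^2(\Omega))}
+ \|(I - \pi^\bmr_{y,\{\Y\}})\nabla_{x'}\ue\|_{L^2(y^\alpha,(0,\Y);L^2(\Omega))}
\lesssim C_u\bigl(\Y^\nu e^{-bM} + \Y^{-1/2+\nu} e^{-\gamma \Y/2}\bigr).
\]
Since $\Y \sim M$, both terms are exponentially small in $M$, and absorbing $\Y^\nu$ into a slightly smaller exponent $b$ yields \eqref{eq:lemma:semidiscretization-error-10}. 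The only subtle point in the whole argument is matching the regularity indices in Theorem~\ref{thm:glob_reg_ue} to the admissibility constraints $\beta \in (0,1]$ and $\nu < s$ so that every weighted integral that appears in the definitions of ${\mathcal B}^1_{\nu,\gamma}$ and ${\mathcal B}^2_{\nu,\gamma}$ is finite with constant controlled by $\|f\|_{{\mathbb H}^{-s+\nu}(\Omega)}$.
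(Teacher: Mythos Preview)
Your proof is correct and follows essentially the same route as the paper: C\'ea quasi-optimality plus truncation via \eqref{eq:ExpDec}, then verification that $\ue$ lies in ${\mathcal B}^2_{\nu,\gamma}(\cdot;L^2(\Omega))$ and ${\mathcal B}^1_{\nu,\gamma}(\cdot;H^1_0(\Omega))$ (equivalently, your $\nabla_{x'}\ue \in {\mathcal B}^1_{\nu,\gamma}(\cdot;L^2(\Omega))$) using Theorem~\ref{thm:glob_reg_ue}, followed by Lemma~\ref{lemma:interpolant-on-geometric-mesh}. Your parenthetical worry about ``$\nu < 1-s$'' is unnecessary: the base-level norms in the ${\mathcal B}^j$ classes require only \eqref{eq:reg_x}, \eqref{eq:reg_L2}, and \eqref{eq:reg_y_l} with $\nu' = \tilde\nu = 0$, so the constraint $\nu' < 1-s$ is trivially met and no perturbation is needed.
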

\begin{proof} 
We begin the proof by invoking Galerkin orthogonality to arrive at
\begin{align*}
\normC{\ue  - \ue_M}^2 &\leq 
\normC{\ue  - \pi^\bmr_{y,\{\Y\}}\ue} ^2 \\
&\lesssim  
\normCC{\ue  - \pi^\bmr_{y,\{\Y\}}\ue}^2 + 
 \|\nabla \ue\|^2_{L^2(y^\alpha,\C\setminus\C_\Y)},
\end{align*}
where $\| \cdot \|_{\C}$ and $\| \cdot \|_{\C_{\Y}}$ are defined by \eqref{eq:norm-C} and \eqref{eq:blf-A-truncated}, 
respectively. 
Since \eqref{eq:ExpDec} shows that $\|\nabla \ue\|_{L^2(y^\alpha; \C\setminus \C_\Y)}$ 
is exponentially small in $\Y$ we thus focus on the interpolation error term. 
To control such a term we first observe that, in view of the definitions 
of the spaces ${\mathcal B}^j_{\beta,\gamma}$, $j \in \{0,1\}$, given by \eqref{eq:Bbeta1}, \eqref{eq:Bbeta2}, 
the regularity estimates \eqref{eq:reg_y_l} and \eqref{eq:reg_x_grad_mu} of  Theorem~\ref{thm:glob_reg_ue}, 
imply that
$\ue$ viewed as a function in $C^\infty((0,\infty),L^2(\Omega))\cap C^\infty((0,\infty),H^1_0(\Omega))$ 
satisfies for $\nu \in (0,s)$ and $K > \kappa$ (with $\kappa$ as in Theorem~\ref{thm:glob_reg_ue}) 
\begin{align}
\ue \in {\mathcal B}^1_{\nu,\gamma}(C\|f\|_{{\mathbb H}^{-s+\nu}(\Omega)},K; H^1_0(\Omega)) 
\cap 
{\mathcal B}^2_{\nu,\gamma}(C\|f\|_{{\mathbb H}^{-s+\nu}(\Omega)},K; L^2(\Omega)) .
\label{eq:reg_Bj}
\end{align}
%
{}From Lemma~\ref{lemma:interpolant-on-geometric-mesh} together with the fact that 
$\Y \sim M$ we conclude that
\begin{align}
\|\nabla_{x'} (\ue - \pi^\bmr_{y,\{\Y\}} \ue)\|_{L^2(y^\alpha,\C_\Y)} 
&\leq C e^{-b M} \|f\|_{{\mathbb H}^{-s+\nu}(\Omega)}, \\
\|\partial_y  (\ue - \pi^\bmr_{y,\{\Y\}} \ue)\|_{L^2(y^\alpha,\C_\Y)} &\leq C e^{-b M}
\|f\|_{{\mathbb H}^{-s+\nu}(\Omega)},
\end{align}
with $b>0$ slightly smaller than that in \eqref{eq:B1beta-estimate-20} and
\eqref{eq:B2beta-estimate-20}. This implies the desired estimate \eqref{eq:lemma:semidiscretization-error-10} and concludes the proof. 
\end{proof}

Finally, for the geometric mesh $\calG^M_{geo,\sigma}$ with the linear
degree vector $\bmr$ and truncation parameter $\Y\sim M$, 
we have the following estimates for the eigenvalues $\mu_i$ 
of problem \eqref{eq:eigenvalue-problem} 
and for the point values $v_i(0)$ in \eqref{eq:decoupled-problems}.

\begin{lemma}[properties of the eigenpairs]
\label{lemma:lambda}
Let $\calG^M_{geo,\sigma}$ 
be a geometric mesh on $(0,\Y)$ and $\bmr$ a linear degree vector with slope $\slope$. 
If $c_i M\le \Y \le c_2 M$, then
there are constants $C$, $b$ depending only on $\sigma$ such that for the 
eigenpairs $(\mu_i,v_i)_{i=1}^\calM$ given by 
\eqref{eq:eigenvalue-problem}, \eqref{eq:eigenbasis-normal} we have that: 
\begin{align*}
\|v_i\|_{L^\infty(0,\Y)} \leq C M^{(1-\alpha)/2}, 
\qquad C^{-1} \slope^{-2} M^{-1} \sigma^{M} \leq \mu_i \leq C M^2. 
\end{align*}
\end{lemma}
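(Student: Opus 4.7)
The plan is to establish each of the three estimates in turn by energy and scaling arguments. The $L^\infty$ bound and the upper bound on $\mu_i$ exploit only the Dirichlet condition $v_i(\Y)=0$ together with the normalization $\|v_i'\|_{L^2(y^\alpha,(0,\Y))}=1$, while the lower bound on $\mu_i$ rests on an elementwise polynomial inverse (Markov-type) inequality in the weighted $L^2(y^\alpha)$ norm.

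For any $y\in(0,\Y)$, writing $v_i(y) = -\int_y^\Y v_i'(t)\,dt$ and applying Cauchy--Schwarz with the weight split $1 = t^{-\alpha/2}\cdot t^{\alpha/2}$ yields
\[
|v_i(y)|^2 \le \Bigl(\int_y^\Y t^{-\alpha}\,dt\Bigr)\,\|v_i'\|^2_{L^2(y^\alpha,(y,\Y))} \le \frac{\Y^{\,1-\alpha}}{1-\alpha},
\]
since $\alpha\in(-1,1)$. With $\Y\le c_2 M$ this gives immediately $\|v_i\|_{L^\infty(0,\Y)}\lesssim M^{(1-\alpha)/2}$. Integrating the same pointwise bound against $y^\alpha$ produces the upper bound
\[
\mu_i \;=\; \|v_i\|^2_{L^2(y^\alpha,(0,\Y))} \;\le\; \frac{\Y^{\,1-\alpha}}{1-\alpha}\int_0^\Y y^\alpha\,dy \;=\; \frac{\Y^2}{1-\alpha^2} \;\lesssim\; M^2.
\]

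For the lower bound on $\mu_i$, we invoke a weighted polynomial inverse inequality on each element $I_j$ of length $h_j$ with polynomial degree $r_j$: a scaling argument --- direct on $I_j$ for $j\ge 2$, where $y^\alpha$ is quasi-constant on $I_j$, and through the reference interval $(0,1)$ equipped with the weight $t^\alpha$ on $I_1$ --- gives
\[
\|v_i'\|^2_{L^2(y^\alpha,I_j)} \;\lesssim\; r_j^{\,4}\, h_j^{-2}\,\|v_i\|^2_{L^2(y^\alpha,I_j)},
\]
uniformly in $j$. Summing over $j=1,\ldots,M$ and using the normalization $\|v_i'\|^2_{L^2(y^\alpha)}=1$ yields $1\lesssim \max_{1\le j\le M}(r_j^{\,4}/h_j^2)\,\mu_i$, so that $\mu_i\gtrsim 1/\max_{j}(r_j^{\,4}/h_j^2)$. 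Inserting the geometric mesh sizes $h_j\sim\Y\sigma^{M-j}$ and the linear degree bound $r_j\le 1+\slope j$, together with $\Y\sim M$, one checks that the maximum is controlled by the contribution of the smallest element $I_1$, and the claimed dependence on $\slope$, $M$, and $\sigma$ then follows by tracking the resulting constants.

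The principal technical obstacle is the weighted Markov-type inequality on $I_1=[0,\Y\sigma^{M-1}]$: because $\alpha\in(-1,1)$, the weight $y^\alpha$ is either singular at $y=0$ (when $\alpha<0$) or vanishes there (when $\alpha>0$), so a naive scaling to the reference element is insufficient. What is needed is a sharp polynomial inverse estimate in $L^2((0,1);t^\alpha\,dt)$ with the correct dependence on the polynomial degree; once this is in place, the remaining combinatorial bookkeeping that identifies the dominant term in $\max_j r_j^{\,4}/h_j^2$ on the geometric mesh is elementary.
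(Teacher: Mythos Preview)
Your proposal is correct and follows essentially the same approach as the paper: the paper's proof simply invokes three auxiliary lemmas from the appendix---a weighted $L^\infty$ bound obtained from $v_i(\Y)=0$ via the fundamental theorem of calculus and Cauchy--Schwarz, its integration against $y^\alpha$ for the upper eigenvalue bound, and an elementwise weighted Markov-type inverse estimate for the lower bound---which are exactly the three steps you outline. Your identification of the weighted polynomial inverse estimate on the first element $I_1$ (handled via pull-back to the reference interval with weight $t^\alpha$) as the main technical ingredient also matches the paper's treatment.
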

\begin{proof}
The results follow from Lemmas~\ref{lemma:weighted-Linfty}, \ref{lemma:upper-bound-lambda}, 
and \ref{lemma:lower-bound-lambda}.
\end{proof}

The previously described approach that perform a semidiscretization in $y$ 
leads to structural insight into the 
regularity properties of the solution $\ue$: 
it shows that, up to an exponentially small, in $\Y$, error 
introduced by cutting off at $\Y$, the solution $\ue$ can be expressed in terms of solutions of 
singularly perturbed reaction--diffusion type problems. 
(A similar structural property for $\ue(\cdot,0)$ 
can also be seen from the Balakrishnan formula, e.g., \cite[Equation (4)]{BP:13}). 
In what follows we will exploit this to design appropriate 
approximation spaces in the $x'$-variable.
Nevertheless, the diagonalization 
\eqref{eq:semidiscrete-galerkin-formulation}--\eqref{eq:decoupled-problems}
has more far-reaching ramifications:
\begin{enumerate}[$\bullet$]
\item 
The diagonalization technique can be exploited numerically 
as it is not restricted to the semi-discrete case.
It holds for arbitrary, closed tensor product approximation spaces 
${\mathbb W} \otimes {\mathbb Q}$, where ${\mathbb W} \subset H^1_0(\Omega)$ and 
${\mathbb Q} \subset H^1_{\{\Y\}}(y^\alpha ,(0,\Y))$.
It completely decouples the solution of the full Galerkin problem, based on
${\mathbb W} \otimes {\mathbb Q}$, into the (parallel) solution of 
$\dim {\mathbb Q}$ 
problems of size $\dim {\mathbb W}$.  
The numerical experiments in Section~\ref{S:NumExp} exploit this observation;
see Remark~\ref{rem:implementation} below.

\item 
The observation \eqref{eq:pythagoras} allows one to gauge the impact 
of solving approximately the $\dim {\mathbb Q}$ problems that are of (singularly perturbed) reaction--diffusion type. 
For convex domains $\Omega$ and spaces ${\mathbb W}$ based on piecewise linears on quasi-uniform meshes, robust, 
(with respect to the singular perturbation parameter), multigrid methods are available 
(see, e.g., \cite{reusken-olshanskii00}).  

\item 
The diagonalization technique \eqref{eq:eigenvalue-problem}--\eqref{eq:decoupled-problems} 
also suggests another numerical technique: 
approximate each 
solution $U_i$ from a different (closed) space $W_i \subset H^1_0(\Omega)$. 
This leads to the approximation of $\ue$ in 
the space $\sum_{i=1}^{\calM} v^i(y) W_i$. 
The resulting Galerkin approximation still satisfies 
\eqref{eq:representation-by-reaction-diffusion-problems} and 
\eqref{eq:error-representation}. 
This approach produces approximation spaces in $\Omega\times (0,\Y)$ 
that do not have tensor product structure but still provides exponential convergence.
As in the sparse grids case of Section~\ref{S:sGP1FEM}
this approach allows for reducing the number of degrees of freedom without sacrificing much accuracy;
specifically, the exponent $1/4$ in the 
exponential convergence bound \eqref{eq:ExpN1/4} that we obtain in the next section 
could be reduced to $1/3$ if $\Omega$ is an interval and 
the exponent $1/5$ in \eqref{eq:ExpN1/5} could be reduced to $1/4$ 
if $\Omega \subset {\mathbb R}^2$ has an analytic boundary, albeit
at the expense of breaking the tensor product structure of the discretization.
\end{enumerate}
\section{$hp$-FE discretization in $\Omega$}
\label{sec:hpx}

Up to this point, we have exploited the analytic regularity of the solution $\ue$
in the extended variable $y$ in order to 
recover (up to logarithmic terms) optimal complexity of a $P_1$-FEM, 
for \eqref{fl=f_bdddom} posed in the polygon $\Omega\subset \bbR^2$, by full tensorization of a 
$hp$-FEM with respect to $y$ with the $P_1$-FEM in $\Omega$

As a final goal, in this section we employ, 
\emph{in addition}, an $hp$-FEM in $\Omega$ to obtain an
\emph{exponentially convergent, local FEM}
for the fractional diffusion problem \eqref{fl=f_bdddom}. 
Naturally, stronger regularity assumptions on the data $f$, $A$ and $c$
will be required: 
in addition to the previously made assumptions
on these data, we assume in Section~\ref{S:hp-hp}
\begin{equation}\label{eq:AnData}
c,f \in \calA(\overline{\Omega},\bbR)\;, 
\quad 
A\in \calA(\overline{\Omega},\GL(\bbR^d))
\;.
\end{equation}
Here, $\calA(\overline{\Omega},G)$ 
denotes the set of functions which are analytic in $\overline{\Omega}$
and take values in the group $G$.
%

\subsection{Tensorized $hp$-FEM in $\Omega \times (0,\Y)$}
\label{S:hp-hp}
The choice of the meshes $\calG^M$ and $\calT$ as well as 
the degree vector $\bmr$ and the polynomial degree $q$ 
were not specified in Section~\ref{S:diagonalization-abstract-setting}. 
Mesh design principles for problems as \eqref{eq:decoupled-problems} 
are available in the literature. 
For meshes, in an $h$-version context, we mention 
the so--called Shishkin meshes and 
refer to \cite{roos-stynes-tobiska96} for an in-depth discussion 
of numerical methods for singular perturbation problems. 
Here, we focus on the $hp$-version. 
Appropriate mesh design principles 
ensuring robust exponential convergence of $hp$-FEM
have been developed in \cite{schwab-suri96,schwab-suri-xenophontos98,melenk97,melenk-schwab98,melenk02}. 
In these references, linear second order elliptic singular perturbations 
with a single length scale and exponential boundary layers were considered.
As is revealed by the diagonalization \eqref{eq:decoupled-problems}, 
the $y$-semidiscrete solution
\eqref{eq:semidiscrete-galerkin-formulation} 
contains $\calM$ separate length scales $\mu_i$, $i=1,...,\calM$.
These need to be resolved \emph{simultaneously} by the $x'$-discretization space. 
To this end, 
based on \cite{schwab-suri96,schwab-suri-xenophontos98,melenk97,melenk-schwab98,melenk02},
we employ a mesh 
that is geometrically refined towards $\partial\Omega$ such that 
the smallest length scale $\mu_{\calM}$ is resolved. 
We illustrate the key points in the following 
Sections~\ref{S:1d-bdy-geomesh} and \ref{S:2d-bdy-geomesh}
in dimension $d=1$, and in dimension $d=2$ for smooth boundaries. 
\subsubsection{Exponential convergence of $hp$-FEM in one dimension}
\label{S:1d-bdy-geomesh}
To gain insight into how to discretize the family of problems \eqref{eq:decoupled-problems}, 
we first consider the following reaction-diffusion problem in $\Omega = (0,2)$: 
given $f\in \calA(\overline{\Omega};\bbR)$
and a parameter $ 0 < \varepsilon \leq 1$,
find $u_\varepsilon \in H^1_0(\Omega)$ 
such that 
\begin{equation}
\label{eq:1d-model-reaction-diffusion}
- \varepsilon^2 u_\varepsilon^{\prime\prime} + u_\varepsilon = f \quad \mbox{ on $\Omega$}, 
\qquad u_\varepsilon(0) = u_\varepsilon(2) = 0\;.
\end{equation}
For \eqref{eq:1d-model-reaction-diffusion},
$hp$-Galerkin FEM afford \emph{robust exponential convergence}. 
The following result is a particular instance of \cite[Proposition~{20}]{melenk97}.
\begin{proposition}[exponential convergence]
\label{prop:melenk97-prop20}
Let $\Omega = (0,2)$.  Let $\geomesh{1D}$ be a mesh on $\Omega$ that is geometrically refined towards $\partial \Omega = \{ 0,2 \}$ with $L$ layers and 
grading factor $\sigma \in (0,1)$:
\begin{equation}
\label{eq:1D-geomesh}
\geomesh{1D} := \{(0,\sigma^{L}), (2-\sigma^L,2)\}  
\cup \{(\sigma^{L-i+1}, \sigma^{L-i}), (2-\sigma^{L-i}, 2-\sigma^{L-i+1})
\}_{i=1}^L.  
\end{equation}
Select $L$ such that $\sigma^L \leq \varepsilon\leq 1$.  
Let $f$ satisfy the analytic regularity estimates
\begin{equation} \label{eq:rhs-analytic}
\|f^{(\ell)} \|_{L^2(\Omega)} \leq C_f K_f^\ell \ell! \qquad \forall \ell \in {\mathbb N}_0,
\end{equation}
for some constants $C_f$, $K_f>0$ that depend on $f$.
Then there exist constants $C$, $b > 0$ independent of $\varepsilon \in (0,1]$ 
such that for the
Galerkin approximation $u_{\varepsilon}^{q,L} \in S^q_0(\Omega,\geomesh{1D})$ 
of the solution $u_\varepsilon$ of \eqref{eq:1d-model-reaction-diffusion} one has 
exponential convergence in the energy norm, given by 
$\| w \|_{\varepsilon^2,\Omega}^2 
:= 
\varepsilon^2 \| w' \|_{L^2(\Omega)}^2 + \| w \|_{L^2(\Omega)}^2$, i.e.
$$
\|u_{\varepsilon} - u_{\varepsilon}^{q,L}\|_{\varepsilon^2,\Omega} 
\lesssim C_f e^{-b q}.
$$
Here the hidden constant and the constant $b$ are independent of $\varepsilon$, 
but depend on $\sigma$ and $K_f$. 
Furthermore, $L = \calO(1+ |\log \varepsilon|)$ so that 
$\dim S^q_0(\Omega,\geomesh{1D})  = \calO(q^2 (1+|\log \varepsilon|))$.
\end{proposition}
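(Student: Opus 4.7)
The plan is to follow the standard two-step template for $hp$-FEM convergence proofs for singularly perturbed reaction-diffusion problems: (i) establish a robust (in $\varepsilon$) analytic regularity decomposition of $u_\varepsilon$ into a smooth part plus boundary layers localized at $x=0$ and $x=2$, and (ii) construct an interpolant on $\geomesh{1D}$ that achieves exponential accuracy on each piece of the decomposition. Quasioptimality of the Galerkin approximation in the energy norm $\|\cdot\|_{\varepsilon^2,\Omega}$ then yields the claim, since the bilinear form of \eqref{eq:1d-model-reaction-diffusion} induces exactly this norm.

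For step (i), I would invoke the regularity theory of \cite{melenk97,melenk-schwab98,melenk02}, using analyticity \eqref{eq:rhs-analytic} of the right-hand side $f$, to obtain a decomposition $u_\varepsilon = w_\varepsilon + \chi_0 + \chi_2$, where $w_\varepsilon \in \calA(\overline{\Omega};\bbR)$ satisfies derivative bounds $\|w_\varepsilon^{(\ell)}\|_{L^\infty(\Omega)} \lesssim C_f K^\ell \ell!$ with constants \emph{independent} of $\varepsilon$, while $\chi_0$, $\chi_2$ are boundary layer correctors supported near $0$ and $2$ respectively, with sharp pointwise bounds of the form $|\chi_0^{(\ell)}(x)| \lesssim C_f \varepsilon^{-\ell} K^\ell \ell! \, e^{-x/(\beta\varepsilon)}$ (and analogously for $\chi_2$ near $x=2$), for some $\beta > 0$ independent of $\varepsilon$. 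For step (ii), I would define a piecewise polynomial interpolant $I_{q,L} u_\varepsilon \in S^q_0(\Omega,\geomesh{1D})$ by treating the two contributions separately. On the two elements abutting $\partial \Omega$, which have length $\sigma^L \leq \varepsilon$, a pull-back to the reference interval $\widehat K$ rescales the layer $\chi_0$ so that its derivatives are $\varepsilon$-uniformly bounded; standard $hp$-interpolation on $\widehat K$ (e.g.\ the Babu\v{s}ka--Szab\'o operator, as in \cite[Theorem~{3.14}]{phpSchwab1998}) then yields exponential convergence $e^{-bq}$ in $H^1(\widehat K)$, which transfers to exponential convergence in the $\|\cdot\|_{\varepsilon^2,\Omega}$-norm after undoing the scaling. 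On the coarser elements $(\sigma^{L-i+1},\sigma^{L-i})$, $i\geq 1$, the layer parts are already exponentially small in the energy norm, with decay at least $e^{-c\sigma^{L-i}/\varepsilon}$, while the smooth part $w_\varepsilon$ is approximated with exponential rate $e^{-bq}$ by the classical $hp$-estimates on geometric meshes for functions analytic up to the boundary. Summing the elementwise contributions, invoking C\'ea's lemma, and choosing $L$ comparable to $q$ (which is permitted since $L = \calO(1 + |\log\varepsilon|)$ suffices to resolve the layer) yields the stated exponential rate; the dimension count $\dim S^q_0(\Omega,\geomesh{1D}) = \calO(q^2(1 + |\log\varepsilon|))$ is immediate since $\geomesh{1D}$ has $2(L+1)$ elements each carrying $\calO(q)$ degrees of freedom.

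The principal technical obstacle is the $\varepsilon$-robust analytic regularity decomposition with the sharp scaling $\varepsilon^{-\ell}$ on derivatives of the layer parts, together with the uniform exponential decay estimate; this is precisely the content of \cite[Proposition~{20}]{melenk97}, and in practice I would simply quote it rather than reconstruct its proof. Once this decomposition is in hand, the approximation/interpolation arguments are routine for the geometric mesh $\geomesh{1D}$, and the algebraic dimension count follows directly from the definition \eqref{eq:1D-geomesh}.
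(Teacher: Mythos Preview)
Your proposal is correct and, in fact, goes further than the paper does: the paper gives no proof at all for this proposition, stating only that it ``is a particular instance of \cite[Proposition~{20}]{melenk97}.'' Your sketch correctly identifies the standard ingredients of that proof (robust analytic regularity decomposition into smooth part plus boundary layers, followed by elementwise $hp$-interpolation on the geometric mesh), and you rightly conclude that in practice one simply quotes the result.
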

\begin{remark}[exponential convergence]
\label{rem:prop:melenk97-prop20}
The discretization described in Proposition~\ref{prop:melenk97-prop20} 
and its properties warrant the following comments.
\begin{enumerate}[$\bullet$]
\item The case $\epsilon \geq 1$: 
Although Proposition~\ref{prop:melenk97-prop20} restricts to $\varepsilon \in (0,1]$,
one can check
that for $\varepsilon \ge 1$, the mesh degenerates into a fixed mesh with 
three points $\{0,1,2\}$ and the corresponding approximation result reads 
\begin{equation}
\label{eq:1d-singular-perturbation-convergence}
\|u_{\varepsilon} - u_{\varepsilon}^{q,L}\|_{\varepsilon^2,\Omega}
\lesssim (1 + \varepsilon) C_f e^{-b q}
\;. 
\end{equation}
\item Different length scales: 
Proposition~\ref{prop:melenk97-prop20} gives
\emph{robust exponential convergence} 
and 
\emph{does not require explicit knowledge
of the singular perturbation parameter $\varepsilon$}, 
but only a lower bound for it.
This is crucial for the presently considered fractional diffusion problem, 
where the decoupled problems \eqref{eq:decoupled-problems} depend on 
several length scales  given by $\lambda_i$ 
(which, in turn, depend on the discretization in 
the extended variable $y\in (0,\Y)$). 
Applying a tensor product $hp$-FE space directly (i.e., without
explicit diagonalization 
\eqref{eq:semidiscrete-galerkin-formulation}--\eqref{eq:decoupled-problems})
to the extended problem \eqref{alpha_harm_intro} 
based on the tensor product of the 
$hp$-FE space $S^q_0(\Omega,\geomesh{1D})$
and on the 
$hp$-FE space $S^\bmr_{\{\Y\}}( (0,\Y),\calG^M_\sigma)$
obviates the numerical solution of the 
generalized eigenproblem \eqref{eq:eigenvalue-problem}.
It requires, however, the $hp$-space $S^q_0(\Omega,\geomesh{1D})$
to \emph{concurrently} approximate the solutions of 
\emph{all singularly perturbed problems \eqref{eq:decoupled-problems}}
in $\Omega$ with exponential convergence rates.
\item Different meshes: 
If an eigenbasis $(v_i)_{i=1}^\calM$ satisfying \eqref{eq:eigenbasis-normal} is available,
then for each of the decoupled singularly perturbed problems in $\Omega$,
a geometric boundary layer mesh is not mandatory to achieve 
robust exponential convergence. 
A coarser mesh, tailored to the specific
length scale $\mu_i$ in the $i$-th equation of 
\eqref{eq:decoupled-problems}, will then suffice;
we refer to \cite{schwab-suri96, phpSchwab1998} for details.
\eremk
\end{enumerate}
\end{remark}

Lemma~\ref{lemma:lambda} asserts that the reaction-diffusion problems 
\eqref{eq:decoupled-problems} are singularly perturbed with 
length scale $\mu_i$ ranging from $\calO(M^{-1} \sigma^M)$ to $\calO(M^2)$. 
Proposition~\ref{prop:melenk97-prop20} implies exponential convergence rates
under the analyticity assumption \eqref{eq:AnData}. 
In the next result, we combine these two observations to obtain an exponentially convergent $hp$-FEM 
for the fractional diffusion problem in $\Omega$.
\begin{theorem}[exponential convergence]
\label{thm:hp-for-fractional}
Let $u\in \mathbb{H}^s(\Omega)$ and $\ue\in\HL(y^\alpha,\C)$ solve \eqref{fl=f_bdddom} and \eqref{alpha_harm_intro}, respectively, 
with $\Omega = (0,2)$, $A=I$, $c=0$ and $f$ satisfying \eqref{eq:AnData}. Given   
fixed constants $c_1$, $c_2>0$, let $\calG^M_{geo,\sigma}$ be a geometric mesh on $[0,\Y]$ with 
grading factor $\sigma \in (0,1)$ and such that $c_1 M \leq \Y \leq c_2 M$. 
Let $\bmr$, on $\calG^M_{geo,\sigma}$, 
be the linear degree vector with slope $\slope$.  
Let $\geomesh{1D}$ be a geometric mesh in $\Omega$ 
as described in Proposition~\ref{prop:melenk97-prop20} 
with an integer $L$ such that 
\begin{equation}\label{eq:LMY}
\sigma^{2L} \leq \Y (\slope M)^{-2} \sigma^M\;.
\end{equation}
Then, there are constants $b$, $\slope_{min} > 0$ 
independent of $M$ and $\Y$ such that  for $\slope \ge \slope_{min}$ 
the Galerkin approximation 
$\ue_{q,\bmr} \in S^q_0(\Omega,\geomesh{1D}) \otimes S^\bmr_{\{\Y\}}((0,\Y),\calG^M_{geo,\sigma})$ 
to $\ue$ satisfies 
\begin{align}
\| u - \tr \ue_{q,\bmr}\|_{\Hs} \lesssim \|\nabla (\ue - \ue_{q,\bmr}) \|_{L^2(y^\alpha,\C)} 
\lesssim \left( M^2 e^{-bq} + e^{-b M} \right),
\end{align}
where the hidden constant is independent of $M$ and $\Y$.
%
%
In addition, as $M\to \infty$, with $L$ and $M$ related by
\eqref{eq:LMY}, we have that, uniformly in $q \in {\mathbb N}$, the total number of degrees of freedom behaves like
$$
\calN_{\Omega,\Y}:= 
\dim S^q_0(\Omega, \geomesh{1D}) \otimes S^\bmr_{\{\Y\}}( (0,\Y), \calG^M_\sigma)
= \calO(qM^3). 
$$
Choosing, in particular, $q \sim M$ 
yields a convergence rate bound in terms of the total number of 
degrees of freedom $\calN_{\Omega,\Y}$ of the form
\begin{align} \label{eq:ExpN1/4}
\| u - \tr \ue_{q,\bmr} \|_{\Hs} 
\lesssim \|\nabla (\ue - \ue_{q,\bmr}) \|_{L^2(y^\alpha,\C)} \lesssim \exp(-b' \calN_{\Omega,\Y}^{1/4})
\end{align}
for some $b' > 0$ independent of $\calN_{\Omega,\Y}$. 
\end{theorem}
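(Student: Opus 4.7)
\textbf{Proof proposal for Theorem~\ref{thm:hp-for-fractional}.}
The plan is to exploit the diagonalization framework of Section~\ref{S:diagonalization-abstract-setting}. I split the error via the triangle inequality:
\begin{equation*}
\|\nabla(\ue - \ue_{q,\bmr})\|_{L^2(y^\alpha,\C)}
\leq
\|\nabla(\ue - \ue_M)\|_{L^2(y^\alpha,\C)}
+
\|\nabla(\ue_M - \ue_{q,\bmr})\|_{L^2(y^\alpha,\C)},
\end{equation*}
where $\ue_M$ is the $y$-semidiscrete solution of \eqref{eq:semidiscrete-galerkin-formulation} on the geometric mesh $\calG^M_{geo,\sigma}$ with linear degree vector $\bmr$ of slope $\slope \ge \slope_{min}$. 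Lemma~\ref{lemma:semidiscretization-error} handles the first term: for $f\in\calA(\bar\Omega;\bbR)$, we have $f\in \mathbb{H}^{-s+\nu}(\Omega)$ for any $\nu\in(0,s)$, so the semidiscretization error is bounded by $C e^{-bM}\|f\|_{{\mathbb H}^{-s+\nu}(\Omega)}$, which accounts for the second term in the claimed bound.

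For the fully discrete error, I would use the key identity \eqref{eq:pythagoras} together with the error representation \eqref{eq:error-representation} from Lemma~\ref{lemma:decoupled-problems}:
\begin{equation*}
\|\nabla(\ue_M - \ue_{q,\bmr})\|_{L^2(y^\alpha,\C)}^2
\sim
\blfa{\C}(\ue_M - \ue_{q,\bmr}, \ue_M - \ue_{q,\bmr})
=
\sum_{i=1}^{\calM} \|U_i - \Pi_i U_i\|^2_{\mu_i,\Omega},
\end{equation*}
where $(\mu_i,v_i)$ are the eigenpairs from \eqref{eq:eigenvalue-problem} and each $U_i \in H^1_0(\Omega)$ solves the decoupled singularly perturbed reaction--diffusion problem \eqref{eq:decoupled-problems} with right-hand side $d_s v_i(0) f$. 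The singular perturbation parameter is $\varepsilon_i^2 := \mu_i$, and by Lemma~\ref{lemma:lambda} these range in $[C^{-1}\slope^{-2} M^{-1}\sigma^M, CM^2]$. The mesh condition \eqref{eq:LMY} rewritten as $\sigma^L \leq \Y^{1/2}(\slope M)^{-1}\sigma^{M/2}$ is precisely designed so that $\sigma^L \leq \sqrt{\mu_i}$ for \emph{all} eigenvalues $\mu_i$ in the above range, which is the mesh hypothesis required by Proposition~\ref{prop:melenk97-prop20}.

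I now apply Proposition~\ref{prop:melenk97-prop20} (together with the companion estimate \eqref{eq:1d-singular-perturbation-convergence} for the modes where $\mu_i\ge 1$) to each decoupled problem. The analyticity bound \eqref{eq:rhs-analytic} for the right-hand side $d_s v_i(0) f$ gives constants proportional to $|v_i(0)|\, C_f$, and by Lemma~\ref{lemma:lambda} we have $|v_i(0)|\le \|v_i\|_{L^\infty}\le CM^{(1-\alpha)/2}$. Hence for each $i$,
\begin{equation*}
\|U_i - \Pi_i U_i\|_{\mu_i,\Omega}
\lesssim
(1+\mu_i^{1/2}) M^{(1-\alpha)/2} C_f\, e^{-bq}
\lesssim
M^{(3-\alpha)/2} C_f\, e^{-bq}.
\end{equation*}
Summing the squares over the $\calM = \dim S^\bmr_{\{\Y\}}((0,\Y),\calG^M_{geo,\sigma}) = \calO(\slope M^2)$ modes yields
\begin{equation*}
\sum_{i=1}^{\calM}\|U_i - \Pi_i U_i\|^2_{\mu_i,\Omega}
\lesssim \calM\, M^{3-\alpha}\, C_f^2\, e^{-2bq}
\lesssim M^4\, e^{-2bq},
\end{equation*}
which (taking square roots and absorbing constants) produces the first term $M^2 e^{-bq}$ of the claimed bound.

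The dimension count follows from $\dim S^q_0(\Omega,\geomesh{1D}) = \calO(qL) = \calO(qM)$ (since $L\sim M$ by \eqref{eq:LMY} when $\sigma$ is fixed) and $\dim S^\bmr_{\{\Y\}}((0,\Y),\calG^M_{geo,\sigma}) = \calO(\slope M^2)$, giving $\calN_{\Omega,\Y} = \calO(qM^3)$. Balancing $q \sim M$ yields $\calN_{\Omega,\Y} \sim M^4$, so both $M^2 e^{-bq}$ and $e^{-bM}$ are dominated by $\exp(-b'\calN_{\Omega,\Y}^{1/4})$, establishing \eqref{eq:ExpN1/4}. The $\Hs$-bound on $u - \tr \ue_{q,\bmr}$ follows from the trace estimate \eqref{Trace_estimate}. \textbf{The main obstacle} I anticipate is bookkeeping the constants in the application of Proposition~\ref{prop:melenk97-prop20}: the proposition is a statement for a fixed right-hand side, but here the right-hand sides $d_s v_i(0) f$ have analyticity constants scaling with $v_i(0)$, and one must verify that the constants $b$ and the implicit constants coming from Proposition~\ref{prop:melenk97-prop20} are genuinely independent of $\mu_i$ across the full range allowed by Lemma~\ref{lemma:lambda}, so that the sum over the $\calO(M^2)$ modes produces only an algebraic $M^4$ factor and does not erode the exponential rate.
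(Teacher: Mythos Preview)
Your proposal is correct and follows essentially the same route as the paper: split into the $y$-semidiscretization error (handled by Lemma~\ref{lemma:semidiscretization-error}) and the fully discrete error expressed via the diagonalization identity \eqref{eq:error-representation}, then apply Proposition~\ref{prop:melenk97-prop20} (with the refinement \eqref{eq:1d-singular-perturbation-convergence}) uniformly to each of the $\calM$ decoupled reaction--diffusion problems, using Lemma~\ref{lemma:lambda} to control the range of $\mu_i$ and to verify that \eqref{eq:LMY} ensures $\sigma^L \le \sqrt{\mu_i}$ for every $i$.

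One minor bookkeeping remark: you are in fact more careful than the paper in tracking the factor $|v_i(0)|\lesssim M^{(1-\alpha)/2}$ in the right-hand side of each decoupled problem (the paper's proof absorbs this silently and records only the factor $(1+\sqrt{\mu_i})\lesssim M$). However, your final tally $\sum_i \|U_i-\Pi_i U_i\|_{\mu_i,\Omega}^2 \lesssim M^4 e^{-2bq}$ is slightly off: with $\calM\sim M^2$ modes and each term bounded by $M^{3-\alpha} e^{-2bq}$, the sum is $\lesssim M^{5-\alpha} e^{-2bq}$, not $M^4 e^{-2bq}$. This is inconsequential, since any fixed power of $M$ is absorbed into the exponential upon choosing $q\sim M$, and the claimed bound \eqref{eq:ExpN1/4} is unaffected.
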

\begin{proof} 
Let $\ue_M$ solve \eqref{eq:semidiscrete-galerkin-formulation}. 
We proceed in two steps.

\emph{Bounds on the semidiscretization error $\ue - \ue_M$:} 
By the assumption of analyticity of $f$, there exist constants $C_f$, $K_f$ such that \eqref{eq:rhs-analytic} holds. 
We thus have that $f \in {\mathbb H}^{1/2 - \delta}(\Omega)$ for any $\delta > 0$. 
Consequently, an application of Lemma~\ref{lemma:semidiscretization-error}.
reveals that for a sufficiently large slope $\slope$ of the linear degree vector $\bmr$ (depending on the 
constants $K_f$ in the analytic regularity bound \eqref{eq:rhs-analytic} of the data $f$)
there exists $b > 0$ such that
$$
\|\nabla( \ue - \ue_M) \|_{L^2(y^\alpha,\C)} \lesssim e^{-b M}. 
$$

\emph{Bounds on the errors $\|U_i - \Pi_i U_i\|_{\mu_i,\Omega}$:} 
We first notice that  Lemma~\ref{lemma:lambda} immediately yields $\slope^{-2} M^{-1} \sigma^{M} \lesssim \mu_i$. 
This, in view of the assumption \eqref{eq:LMY}, implies that $\sigma^{2L} \lesssim \mu_i$. 
Consequently, given that $f$ is analytic on $\overline{\Omega}$, we apply Proposition~\ref{prop:melenk97-prop20}
(more precisely, the refinement \eqref{eq:1d-singular-perturbation-convergence} to obtain that
\begin{equation}
\label{eq:thm:hp-for-fractional-10}
\|U_i  - \Pi_i U_i\|_{\mu_i,\Omega} \lesssim \Y e^{-b q} \lesssim M e^{-b q},
\end{equation}
where we have also used that $\mu_i \lesssim M^2 \lesssim \Y^2$, which follows, again, from Lemma~\ref{lemma:lambda} and the condition $c_1 M \leq \Y \leq c_2 M$. We recall that $\| \cdot \|_{\mu_i,\Omega}$ is defined as in \eqref{eq:pythagoras}. 
Finally, combining \eqref{eq:thm:hp-for-fractional-10} with \eqref{eq:error-representation} 
and recalling that $\calM \lesssim M^2$ give 
$$
\|\ue_M - \ue_{h,M}\|^2_{L^2(y^\alpha,\C)} \lesssim  \calM M^2 e^{-2 bq} 
\lesssim M^4 e^{-bq}. 
$$
This concludes the proof. 
\end{proof}

\begin{remark}[other operators]
Theorem~\ref{thm:hp-for-fractional} also holds for 
$0< c \in \bbR$ by arguing as in the proof 
of Theorem~\ref{thm:hp-for-fractional-2D} ahead. 
\eremk
\end{remark}

\begin{remark}[mesh gradings $\Omega$]
The condition \eqref{eq:LMY} is a sufficient condition ensuring that the smallest 
boundary layer length scale (characterized by $\min_{i} \mu_i$) that arises 
from the diagonalization is resolved by the mesh $\geomesh{1D}$. 
More generally, if the geometric mesh of \eqref{eq:1D-geomesh} 
were based on the mesh grading factor $\sigma_{x'} \in (0,1)$
(distinct from the factor $\sigma$ in the mesh in the extended variable $y$), 
then condition \eqref{eq:LMY} could be replaced with 
$\sigma_{x'}^{2L} \lesssim \Y (\slope M)^{-2} \sigma^{M}$ for some constant 
independent of $L,M,\Y$.
\eremk
\end{remark}
%

%
\subsubsection{Exponential convergence of $hp$-FEM in two dimensions}
\label{S:2d-bdy-geomesh}

\begin{figure}
\psfragscanon 
\psfrag{1}{$1$}
\psfrag{s}{\small$\sigma$}
\psfrag{s2}{\small $\sigma^2$}
\psfrag{sl}{\small $\sigma^L$}
\psfrag{S0}{$S^L$}
\psfrag{S1}{$S^{L-1}$}
\psfrag{SL}{$S^0$}
\includegraphics[width=0.25\textwidth]{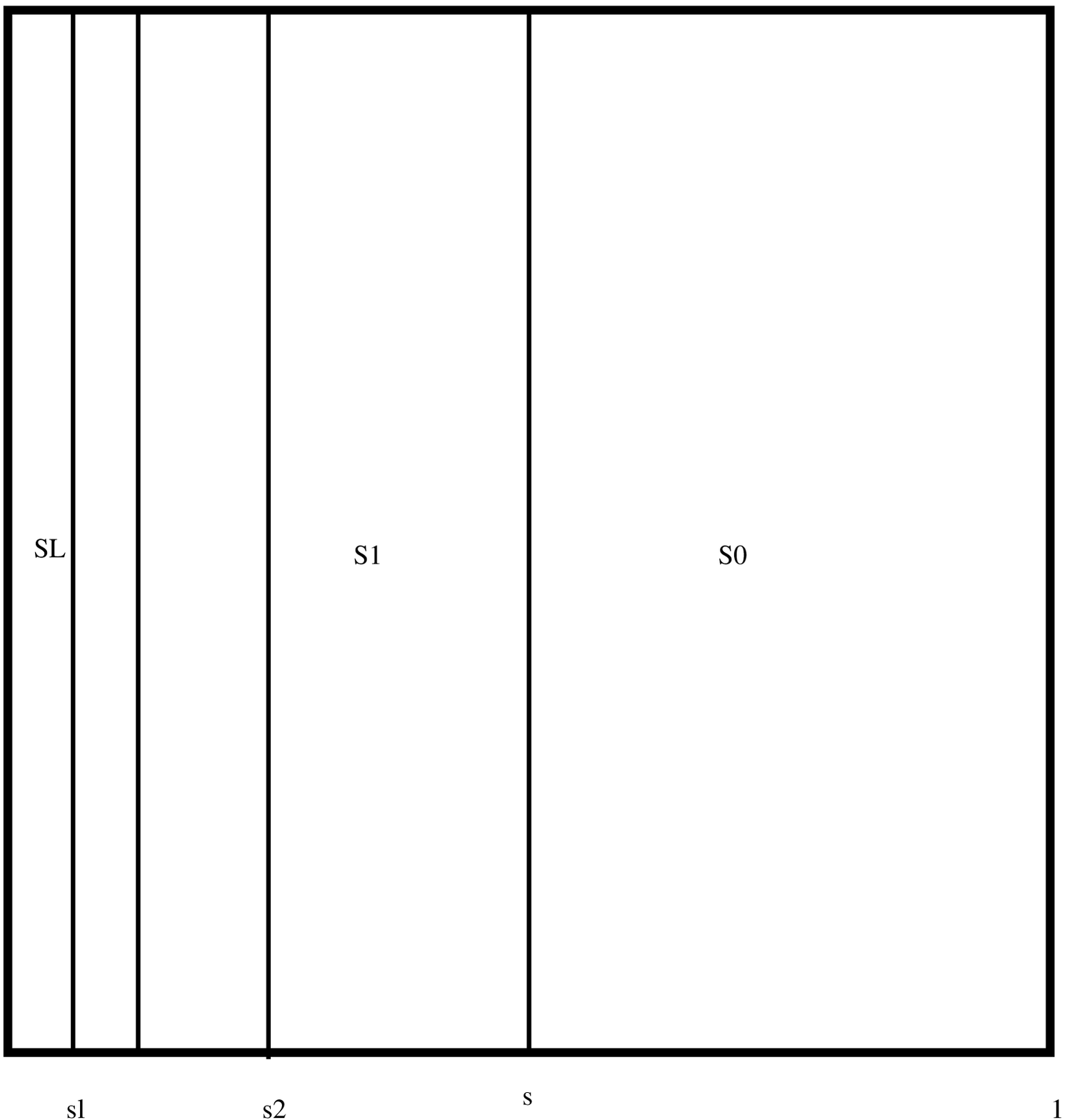}
\hfill 
\psfrag{O1}{$\Omega_1$}
\psfrag{O2}{$\Omega_2$}
\psfrag{O3}{$\Omega_3$}
\psfrag{O4}{$\Omega_4$}
\psfrag{O5}{$\Omega_5$}
\includegraphics[width=0.25\textwidth]{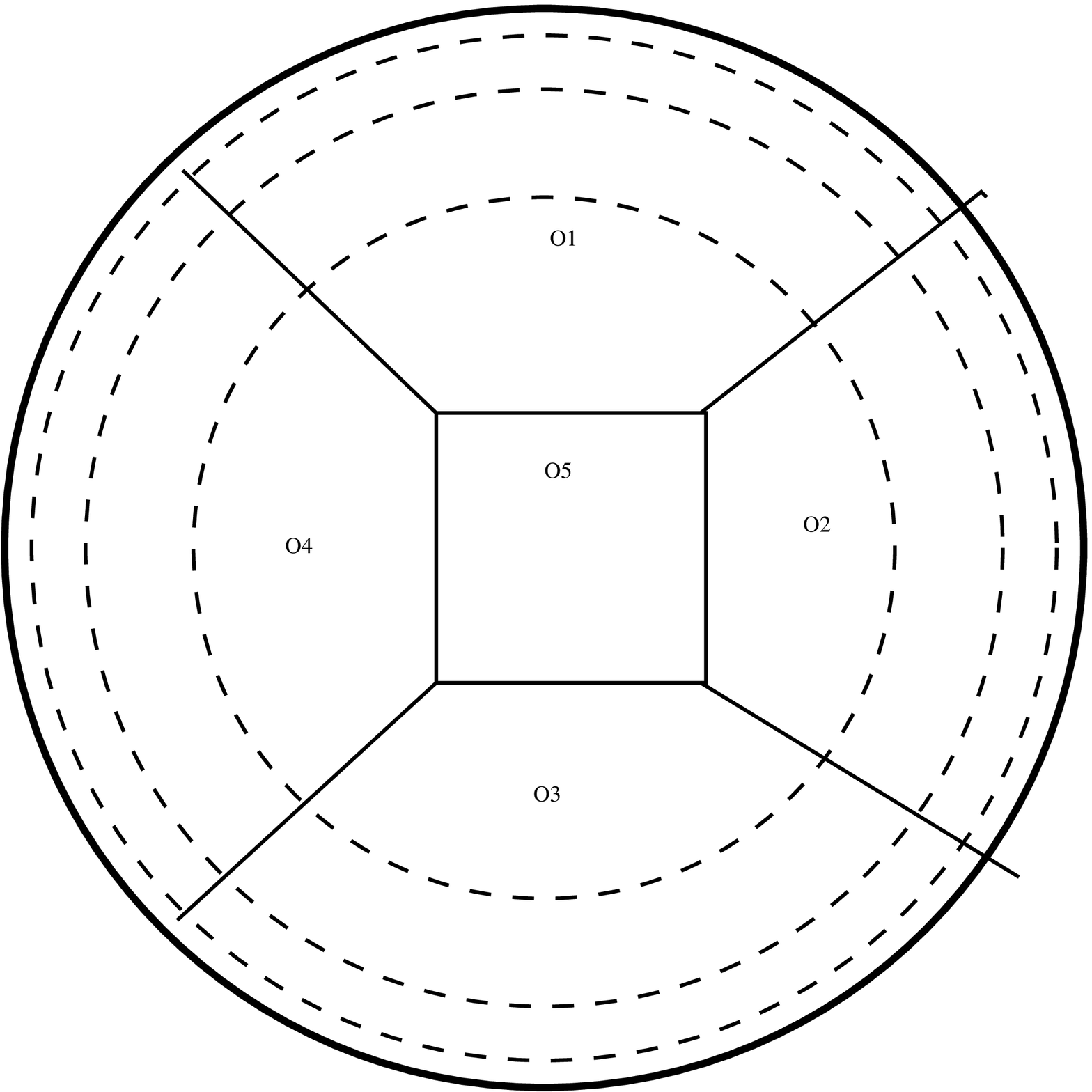}
\psfragscanoff
\caption{\label{fig:aniso-mesh} 
Anisotropic geometric mesh (see Definition~\ref{def:anisoGeoMesh}). 
Left: geometric refinement of the reference patch. 
Right: Example of mesh with $N = 5$ and $n = 4$.  
Solid lines indicate patches, dashed lines represent 
mesh lines introduced by refinement of reference patches.
}
\end{figure}

Let us now discuss the extension of the ideas of Section~\ref{S:1d-bdy-geomesh} to the 
two dimensional case. As it is structurally similar to the univariate case, we proceed briefly. 
For domains $\Omega \subset {\mathbb R}^d$, $d > 1$, with \emph{smooth} 
boundary, the boundary layers presented in the solutions $U_i$ of the singularly perturbed problems \eqref{eq:decoupled-problems} can be resolved
by meshes that are \emph{anisotropically} refined towards the boundary $\partial\Omega$. 
A two dimensional analogue of the meshes $\calT^{1D}_{geo,L}$ of Proposition~\ref{prop:melenk97-prop20} 
is presented in \cite[Section~{3.4.3}]{melenk-schwab98} and illustrated in Figure~\ref{fig:aniso-mesh} (right). 
These \emph{anisotropic geometric meshes} $\geomesh{2D}$ are created as push-forwards of
anistropically refined geometric meshes on references patches 
as detailed in the following definition, where
we follow the notation employed in \cite[Section {3.4.3}]{melenk-schwab98}.

\begin{definition}[anisotropic geometric meshes $\geomesh{2D}$]
\label{def:anisoGeoMesh}
Denote by $S = [0,1]^2$ the reference element. 
Let $\Omega_i$, $i=1,\ldots,N$, be a fixed mesh on $\Omega \subset {\mathbb R}^2$ consisting of curvilinear
quadrilaterals with bijective element maps $M_i:S \rightarrow \Omega_i$ satisfying the ``usual'' conditions
for $H^1$-conforming triangulations (see \cite[(M1)--(M3) in Section~{3.1}]{melenk-schwab98}
for the precise definition). The elements $\Omega_i$ are called \emph{patches} and the associated
maps $M_i$ \emph{patch maps}. 
Let $\Omega_i$, $i=1,\ldots,n \leq N$, be such that the left edge $e:= \{0\} \times (0,1)$ of 
$S$ is mapped to $\partial\Omega$, i.e., 
$M_i(e_1) \subset \partial\Omega$, and that $M_i(\partial S \setminus e) \cap \partial\Omega = \emptyset$.
Assume that the remaining elements $\Omega_{i}$, $i=n+1,\ldots,N$ satisfy $\overline{\Omega}_i \cap \partial\Omega = \emptyset$.

Subdivide the reference element $S$ into $L+1$ rectangles $S^\ell$, $\ell=0,\ldots,L$, 
as follows for chosen grading factor $\sigma \in (0,1)$:
\begin{equation}
\label{eq:aniso-geo} 
S^0 = (0,\sigma^L) \times (0,1), 
\qquad S^\ell = (\sigma^{L+1-\ell},\sigma^{L-\ell}) \times (0,1), \qquad \ell=1,\ldots,L. 
\end{equation}
Define elements $\Omega_i^\ell$, $i=1,\ldots,n$, $\ell=0,\ldots,L$, and the corresponding element
maps $M_i^\ell:S\rightarrow \Omega_i^\ell$ by
\begin{align*}
\Omega_i^0 & := M_i(S^0),&  M_i^0(\xi,\eta)&:= M_i(\xi \sigma^L,\eta), \\
\Omega_i^\ell & := M_i(S^\ell),&  M_i^\ell(\xi,\eta)&:= M_i(\sigma^{L+1-\ell} + \xi \sigma^{L-\ell},\eta), 
\qquad \ell=1,\ldots,L. 
\end{align*}
The mesh $\geomesh{2D}$ given by the elements
$\{\Omega_i^\ell\,:\, i=1,\ldots,n, \ell=0,\ldots,L\} \cup \{\Omega_j\,:\,  j=n+1,\ldots,N \}$ 
with corresponding element maps introduced above is a triangulation of $\Omega$ that 
satisfies the ``usual'' conditions of $H^1$-conforming
triangulations, i.e., conditions \cite[(M1)--(M3) in Section~{3.1}]{melenk-schwab98}. For $\geomesh{2D}$ 
the FE-space is given by the standard $H^1_0(\Omega)$-conforming space of mapped polynomials of degree $q$: 
\begin{equation}
\label{eq:quadFEspace}
S^q_0(\geomesh{2D}):= \{u \in H^1_0(\Omega)\,:\, u|_K \circ F_K \in {\mathbb Q}_q(S)
\quad \forall K \in \geomesh{2D}\}, 
\end{equation}
where $F_K:S \rightarrow K$ is the element map of $K \in \geomesh{2D}$
and ${\mathbb Q}_q(S)$ is the space of polynomials of degree $q$ in each variable on $S$.
\end{definition}

For such anisotropically refined meshes, we have the following exponential convergence result.

\begin{theorem}[exponential convergence]
\label{thm:hp-for-fractional-2D}
Let $u\in \mathbb{H}^s(\Omega)$ and $\ue\in\HL(y^\alpha,\C)$ 
solve \eqref{fl=f_bdddom} and \eqref{alpha_harm_intro}, 
respectively, with $\Omega \subset {\mathbb R}^2$ having an analytic boundary, 
$A=I$, $0 \leq c \in \bbR$, and $f$ satisfying the 
regularity requirement \eqref{eq:AnData} \eqref{eq:AnData}.
Given fixed constants $c_1$, $c_2>0$, let $\calG^M_{geo,\sigma}$ be a geometric mesh on $[0,\Y]$ 
with grading factor $\sigma \in (0,1)$ and such that $c_1 M \leq \Y \leq c_2 M$. 
Let $\bmr$, on $\calG^M_{geo,\sigma}$, be the linear degree vector with slope $\slope$.
Assume that $L$ is chosen such that \eqref{eq:LMY} holds.
Let $\geomesh{2D}$ be an anisotropic geometric
mesh with $L$ layers as described in Definition~\ref{def:anisoGeoMesh} where, 
additionally, the patch maps $M_i$, $i=1,\ldots,N$ are assumed to be analytic.
Then, there are constants $C$, $b$, $\slope_{min} > 0$
independent of $M$ and $\Y$ such that for $\slope \ge \slope_{min}$
the Galerkin approximation
$\ue_{q,\bmr} \in S^q_0(\Omega,\geomesh{2D}) \otimes S^\bmr_{\{\Y\}}((0,\Y),\calG^M_{geo,\sigma})$ 
to $\ue$ satisfies
\begin{align}
\label{eq:thm:hp-2D-conv}
\| u - \tr \ue_{q,\bmr}\|_{\Hs} \lesssim \|\nabla (\ue - \ue_{q,\bmr}) \|_{L^2(y^\alpha,\C)} 
\leq C \left( M^2 e^{-bq} + e^{-b M} \right).  
\end{align}
Furthermore, as $M\to \infty$, with $L$ related to $M$ by
\eqref{eq:LMY}, we have that, uniformly in $q \in {\mathbb N}$, 
the total number of degrees of freedom behaves like
$$
\calN_{\Omega,\Y}:= 
\dim S^q_0(\Omega,\geomesh{2D}) \otimes S^\bmr_{\{\Y\}}( (0,\Y), \calG^M_\sigma)
= \calO(q^2M^3). 
$$
Choosing, in particular, $q \sim M$
yields a convergence rate bound in terms of the total number of
degrees of freedom $\calN_{\Omega,\Y}$ of the form
\begin{align} \label{eq:ExpN1/5}
\| u - \tr \ue_{q,\bmr}\|_{\Hs} 
\lesssim \|\nabla (\ue - \ue_{q,\bmr}) \|_{L^2(y^\alpha,\C)} 
\lesssim \exp(-b' \calN_{\Omega,\Y}^{1/5}) 
\end{align}
for some $b' > 0$ independent of $\calN_{\Omega,\Y}$.
\end{theorem}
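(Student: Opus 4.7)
The plan is to mirror the proof of Theorem~\ref{thm:hp-for-fractional}, replacing the one-dimensional boundary-layer $hp$-result of Proposition~\ref{prop:melenk97-prop20} by its two-dimensional counterpart on the anisotropic geometric meshes $\geomesh{2D}$ of Definition~\ref{def:anisoGeoMesh}. The diagonalization framework of Section~\ref{S:diagonalization-abstract-setting} does not distinguish between $d=1$ and $d=2$, so the only genuinely new ingredient is a robust $hp$-convergence bound for singularly perturbed reaction--diffusion problems on $\Omega\subset\bbR^2$ with analytic boundary.

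First I would bound the $y$-semidiscretization error: since \eqref{eq:AnData} gives $f\in\calA(\overline{\Omega})\hookrightarrow \mathbb{H}^{-s+\nu}(\Omega)$ for any $\nu\in(0,s)$, Lemma~\ref{lemma:semidiscretization-error} yields, for the slope $\slope\ge\slope_{\min}$,
\[
\|\nabla(\ue-\ue_M)\|_{L^2(y^\alpha,\calC)}\lesssim e^{-bM}.
\]
Next I would apply the diagonalization \eqref{eq:decoupled-problems}. With $A=I$ and constant $c\ge 0$, the $i$-th decoupled equation reads
\[
-\mu_i\Delta U_i+(c\mu_i+1)U_i=d_s v_i(0)f\quad\text{in }\Omega,\qquad U_i=0\text{ on }\partial\Omega,
\]
i.e.\ it is a singularly perturbed reaction--diffusion problem with effective parameter $\varepsilon_i^2:=\mu_i/(c\mu_i+1)\le\mu_i$. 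By Lemma~\ref{lemma:lambda}, $\sqrt{\mu_i}\gtrsim\slope^{-1}M^{-1/2}\sigma^{M/2}$, and the hypothesis \eqref{eq:LMY} is exactly what is needed so that $\sigma^L\lesssim\varepsilon_i$ holds \emph{uniformly in $i$}.

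For each $i$, I would then invoke the two-dimensional robust $hp$-FEM result of \cite{melenk-schwab98} (see also \cite{melenk02}), applicable because $\partial\Omega$ and the patch maps are analytic and the right-hand side $d_s v_i(0)f$ is analytic on $\overline{\Omega}$: on $\geomesh{2D}$ with $L$ layers and polynomial degree $q$, provided $\sigma^L\lesssim\varepsilon_i$, the Galerkin projector $\Pi_i$ of \eqref{eq:galerkin-for-vi} satisfies
\[
\|U_i-\Pi_i U_i\|_{\mu_i,\Omega}\lesssim |v_i(0)|\,e^{-bq}\lesssim\Y\,e^{-bq},
\]
with constants independent of $\varepsilon_i$; the last inequality uses $\|v_i\|_{L^\infty(0,\Y)}\lesssim M^{(1-\alpha)/2}\lesssim \Y$ from Lemma~\ref{lemma:lambda} and $\Y\sim M$. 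Since a linear degree vector on $\calG^M_{geo,\sigma}$ yields $\calM=\dim S^\bmr_{\{\Y\}}((0,\Y),\calG^M_{geo,\sigma})=\calO(M^2)$, inserting this per-mode bound into \eqref{eq:error-representation}--\eqref{eq:pythagoras} gives
\[
\|\nabla(\ue_M-\ue_{h,M})\|_{L^2(y^\alpha,\calC_\Y)}^2\lesssim \calM\,\Y^2 e^{-2bq}\lesssim M^4 e^{-2bq},
\]
and combining with Step~1 yields \eqref{eq:thm:hp-2D-conv}. The complexity count follows from Definition~\ref{def:anisoGeoMesh}: each of the fixed number of boundary patches contributes $L+1$ anisotropic sublayers and each sublayer carries $\calO(q^2)$ tensor-product shape functions, so $\dim S^q_0(\Omega,\geomesh{2D})=\calO(q^2 L)=\calO(q^2 M)$; multiplying by $\calM=\calO(M^2)$ gives $\calN_{\Omega,\Y}=\calO(q^2 M^3)$, and the balance $q\sim M$ turns $e^{-bM}+M^2 e^{-bq}$ into $\exp(-b'\calN_{\Omega,\Y}^{1/5})$.

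The main obstacle is the invocation of the two-dimensional robust $hp$-estimate: one needs an exponential convergence bound for $-\varepsilon^2\Delta u+u=g$ on $\Omega$ with analytic boundary that is (i) \emph{uniform} in the perturbation parameter $\varepsilon\in(0,1]$, (ii) realized on a single mesh $\geomesh{2D}$ resolving the worst-case scale $\min_i\varepsilon_i$ so that it applies \emph{simultaneously} to all $\calM$ decoupled problems, and (iii) expressed in the scaled energy norm $\|\cdot\|_{\mu_i,\Omega}$ with constants depending only algebraically on $|v_i(0)|$. Such a bound is provided by the $hp$-analysis of \cite{melenk-schwab98,melenk02} for boundary-layer expansions of analytic data on anisotropic geometric meshes, but the bookkeeping of the $\Y$- and $\mu_i$-dependent prefactors through the Pythagoras identity \eqref{eq:pythagoras} must be tracked carefully so as not to introduce spurious algebraic factors that would degrade the exponent $1/5$ in \eqref{eq:ExpN1/5}.
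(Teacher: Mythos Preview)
Your proposal is correct and follows essentially the same approach as the paper: split into the $y$-semidiscretization error (handled by Lemma~\ref{lemma:semidiscretization-error}), then bound the fully discrete error via the diagonalization identity \eqref{eq:error-representation} and a robust $hp$-estimate for each decoupled reaction--diffusion problem on $\geomesh{2D}$, citing \cite{melenk-schwab98}; the case $c>0$ is reduced to $c=0$ by the rescaling $\widetilde\mu_i=\mu_i/(1+c\mu_i)$, exactly as the paper does.

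The only minor difference is bookkeeping of the algebraic prefactor in the per-mode bound. You track it through the right-hand side amplitude $|v_i(0)|\lesssim M^{(1-\alpha)/2}$ from Lemma~\ref{lemma:lambda}, whereas the paper's one-dimensional template (Theorem~\ref{thm:hp-for-fractional}) attributes the factor $\Y\sim M$ to the large-$\varepsilon$ regime via \eqref{eq:1d-singular-perturbation-convergence}, i.e., to $(1+\sqrt{\mu_i})\lesssim M$. In fact both factors are present in the problem $-\mu_i\Delta U_i+(1+c\mu_i)U_i=d_s v_i(0)f$, and both are merely polynomial in $M$; since any power of $M$ can be absorbed into $e^{-bq}$ by slightly reducing $b$, neither accounting affects \eqref{eq:thm:hp-2D-conv} or the exponent $1/5$ in \eqref{eq:ExpN1/5}. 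The paper's two-dimensional proof is terser than yours and simply writes $\inf_v\|U_i-v\|_{\mu_i,\Omega}\lesssim e^{-bq}$ before deferring to the one-dimensional argument, leaving these prefactors implicit.
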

\begin{proof}
The proof parallels that of Theorem~\ref{thm:hp-for-fractional}. 
We start with the case $c = 0$. 
By the arguments in \cite[Section~{3.4.3}]{melenk-schwab98} 
the meshes $\geomesh{2D}$ allow for estimates of the form 
\begin{equation}
\label{eq:Ui-approximation-2D}
\inf_{v \in S^q_0(\Omega,\geomesh{2D})} \|U_i - v\|_{\mu_i,\Omega} \lesssim e^{-b q} 
\end{equation}
for the solutions $U_i$ of \eqref{eq:decoupled-problems}, provided $L$ and $\mu_i$ 
satisfy $\sigma^{2L} \lesssim \mu_i$, which is ensured by 
assumption \eqref{eq:LMY}. 
Here, the implied constant and $b > 0$ depend on $f$, $\partial\Omega$, and the 
analyticity of the patch maps $M_i$, $i=1,\ldots,N$. 
The estimates \eqref{eq:Ui-approximation-2D} then 
allow us to conclude the proof for $c = 0$ as in Theorem~\ref{thm:hp-for-fractional}.

For $c \ne 0$, we observe that the singularly perturbed problems \eqref{eq:decoupled-problems} 
in $\Omega$ take the form
\begin{equation*}
-\mu_i \Delta U_i + (1 + c \mu_i) U_i = f \quad \mbox{ on $\Omega$}, \qquad U_i|_{\partial\Omega} = 0. 
\end{equation*}
This can be transformed to the case $c = 0$ by rewriting it
in terms of
$\widetilde \mu_i:= \mu_i/ (1 + c \mu_i)$ as
\begin{equation*}
-\widetilde \mu_i \Delta U_i + U_i = \widetilde f:=\frac{1}{1 + c \mu_i} f  \quad \mbox{ on $\Omega$}, 
\qquad U_i|_{\partial\Omega} = 0. 
\end{equation*} 
The approximation result \eqref{eq:Ui-approximation-2D} 
holds again (with $\mu_i$ replaced with $\widetilde\mu_i$ there). 
\end{proof}

\begin{remark}[limitations and extensions]
\label{rem:hp-for-fractional}
The result of Theorem~\ref{thm:hp-for-fractional-2D} warrants the following remarks:
\begin{enumerate}[(i)]
\item 
Theorem~\ref{thm:hp-for-fractional-2D} is restricted to $A = I$ and 
to the coefficient $c$ being constant,
as it relies on \cite{melenk-schwab98}, 
which in turn builds on the regularity theory developed in \cite{MR1664765}. 
The results of \cite{melenk-schwab98} can be generalized to 
$A$ and $c$ that satisfy \eqref{eq:AnData} using the results from \cite{melenk02}. 
In turn, Theorem~\ref{thm:hp-for-fractional-2D} 
could be generalized to this setting as well.

\item 
Theorem~\ref{thm:hp-for-fractional-2D} 
can be expected to generalize to $\Omega \subset {\mathbb R}^d$ 
with $d>2$ if $\partial \Omega$ is analytic. The underlying reason for this 
is that the boundary layers are structurally a one dimensional phenomenon, 
which can be resolved
with anisotropic refinement towards $\partial\Omega$.  
The approximation result 
\eqref{eq:thm:hp-2D-conv} can therefore be expected to hold, 
however, the complexity 
is then $\calN_{\Omega,\Y} = \calO( q M^{d+2})$, 
resulting in an exponential convergence bound of 
$\exp(-b'\calN_{\Omega,\Y}^{1/(d+3)})$. 

\item 
Theorem~\ref{thm:hp-for-fractional-2D} does generalize to 
so-called ``bounded, curvilinear polygonal domains'' 
$\Omega \subset {\mathbb R}^2$. 
The analogue of Proposition~\ref{prop:melenk97-prop20}, i.e., 
a rigorous convergence analysis of $hp$-FEM in $\Omega$ for 
the single-scale reaction diffusion problem with the appropriate
mesh refinement towards the corners of $\Omega$ is available in \cite{melenk02}. 
\eremk
\end{enumerate}
\end{remark}

\section{Numerical experiments} 
\label{S:NumExp}
We consider $A = I$ and $c = 0$, \ie $\mathcal{L}^s = (-\Delta)^s$. 
Most of the numerical experiments will be performed on the so-called $L$-shaped 
polygonal domain $\Omega \subset \R^2$ determined by the vertices
\[ 
\bmc\in \{ (0,0), (1,0), (1,1), (-1,1), (-1,-1), (0,-1)\}.
\]
For validation purposes again, we consider the following smooth exact 
solution with the corresponding right-hand side (recall $x'=(x_1,x_2) \in \Omega$)
\begin{equation}
  \label{eq:exact_L}
  u(x_1,x_2) = \sin \pi x_1 \sin \pi x_2, \quad
  f(x_1,x_2) = (2\pi^2)^s \sin \pi x_1 \sin \pi x_2.
\end{equation}
To investigate the effect of mesh refinement in $\Omega$, we also consider
\begin{equation}
  \label{eq:nonsmooth_f}
  f(x_1,x_2) \equiv 1 \;.
\end{equation} 
Notice that, in this case, $f\in \calA(\overline{\Omega},\bbR)$, 
but $f\in \Ws$ only for $ s > 1/2 $ due to boundary incompatibility.
The exact solution is not known, so that
the error will be estimated numerically, with reference to
an accurate numerical solution. 
The error measure will always be the energy norm
\[
  \|u-\tr{\ue_{h,M}}\|^2_{\Hs}
  \lesssim \|\nabla (\ue- \ue_{h,M})\|^2_{L^2(y^\alpha,\C)}
  = d_s\int_\Omega f (u - \tr{\ue_{h,M}} ),
  \]
where $\ue_{h,M}$ denotes the discrete solution in $\C_\Y$. 

Finally, a one-dimensional example $\Omega = (0,1)$ will be described to 
illustrate $hp$-FEM in $\Omega \times (0,\Y)$.

\begin{remark}[implementation]
\label{rem:implementation}
Let us provide some algorithmic details of the methods used in practical computations.
For the chosen discrete spaces the mass and stiffness matrices in $\Omega$ and $(0,\Y)$ are computed. 
We then numerically solve the generalized eigenvalue problem \eqref{eq:eigenbasis-normal}, 
thereby arriving at $\calM$ decoupled linear systems: 
Find  $U_i \in S^q_0(\Omega,\calT)$ such that
  \begin{equation}
\label{eq:decoupled-problems-disc}
\blfa{\mu_i,\Omega}(U_i,V) = d_s v^i(0) {\int_\Omega f V \diff x'} 
             \qquad \forall V \in S^q_0(\Omega,\calT),
\end{equation}
where $\blfa{\mu_i,\Omega}$ is defined in \eqref{eq:decoupled-problems}.
Following \eqref{eq:representation-by-reaction-diffusion-problems}, 
the solution is then obtained by
\[
  \ue_{h,M}(x',y)  = \sum_{i=1}^{\calM} v^i(y) U_i(x').
\]
The implementation was done in Matlab R2017a, 
with the generalized eigenvalue problem solved with  {\tt eig} and the decoupled linear systems  
by a direct solver, i.e., Matlab's ``backslash'' operator.
\eremk
\end{remark}

\subsection{$P_1$-FEM in $\Omega$ with radical meshes in $(0,\Y)$}
\label{S:P1y}
In the following examples we make use of the family of graded meshes $\calG^{k}_{gr,\eta}$ 
as described in Section~\ref{S:p1y} with particular choices $\eta = 2/s$, $k = h/2$, 
and $\Y = |\log h|$, where $h$ denotes the mesh width of the mesh in $\Omega$ to be described next.

\subsubsection{Smooth solution}
\label{S:SmoothSol}
For the first experiment we investigate the smooth solution \eqref{eq:exact_L}. 
We use the $P_1$-FEM in $\Omega$ on a hierarchy of uniformly refined meshes $\calT^\ell$.
The results are displayed in Figure~\ref{fig:conv_smooth}. 
As the theory predicts we see linear convergence in the energy norm 
with respect to the meshwidth $h$.

\begin{figure}
  \begin{minipage}[c]{0.48\textwidth}
      \includegraphics[width=\textwidth]{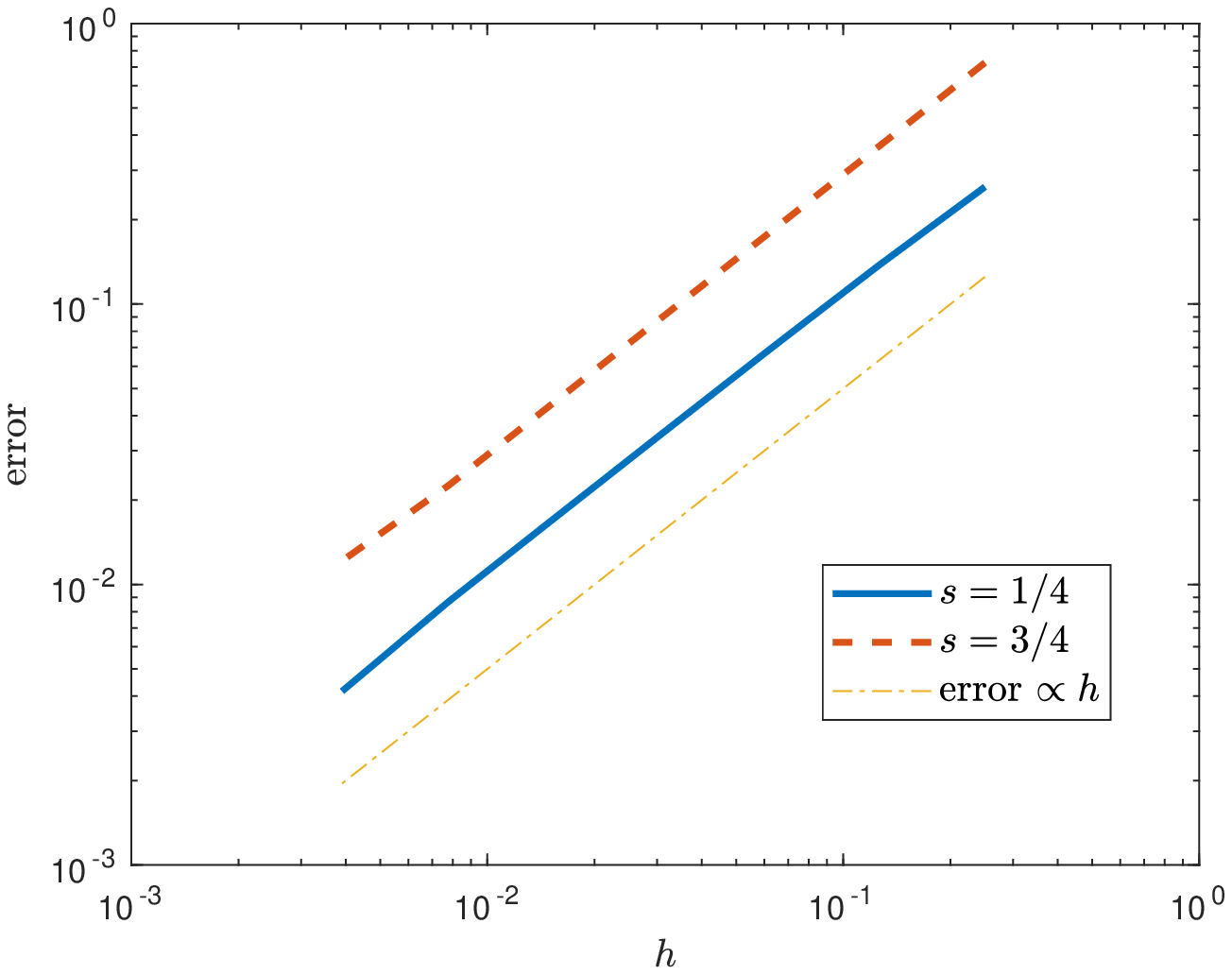}
  \caption{Convergence of the error in the energy norm versus the meshwidth in $\Omega$ 
with the (smooth) exact solution given by \eqref{eq:exact_L}. 
A $P_1$-FEM on uniformly refined meshes in $\Omega$ and $P_1$-FEM on radical meshes in $(0,\Y)$ is used.}
  \label{fig:conv_smooth}
\end{minipage}
\hfill
\begin{minipage}[c]{0.48\textwidth}
  \includegraphics[width=\textwidth]{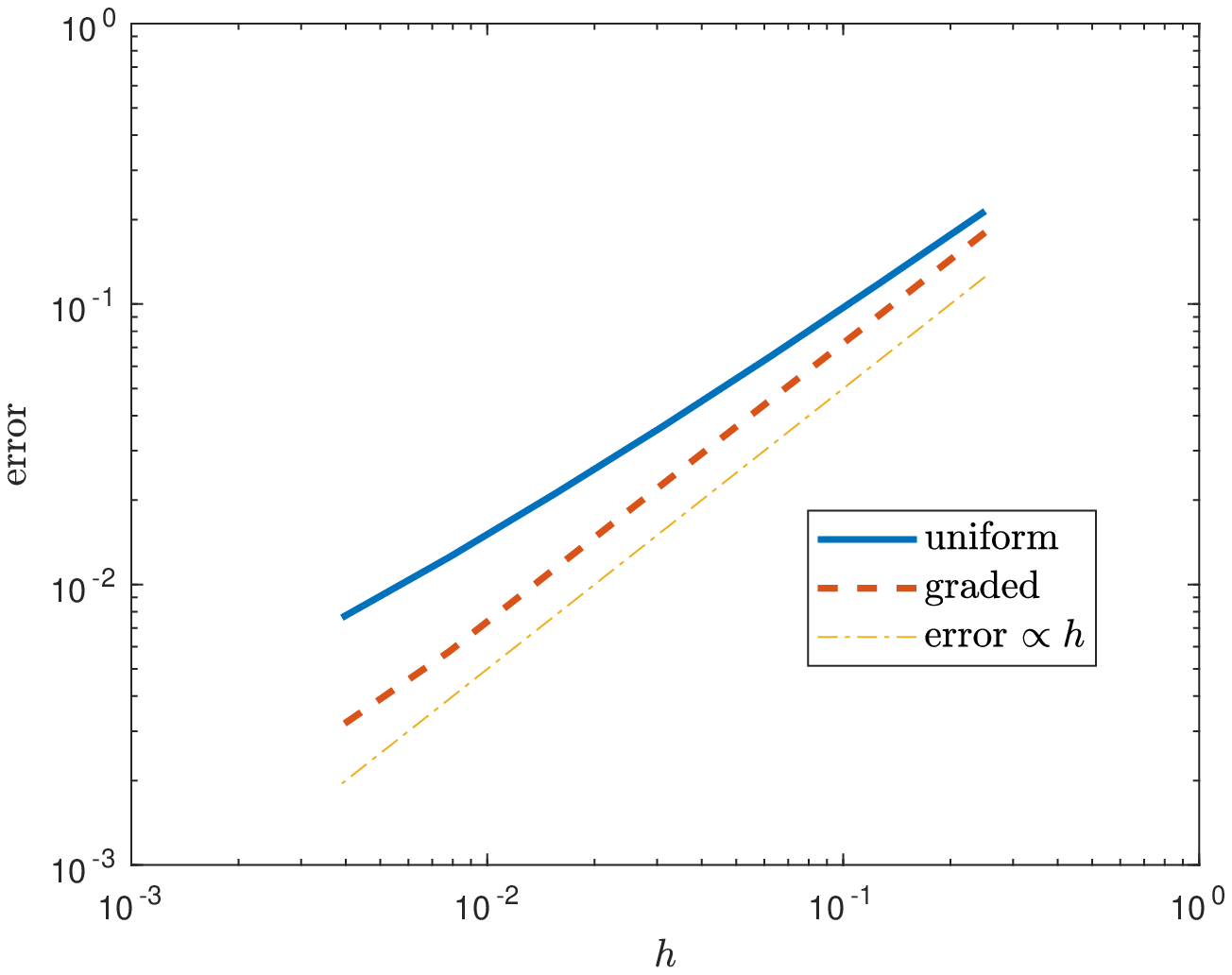}
  \caption{Convergence of the error in the energy norm versus meshwidth
           in $\Omega$ with the right-hand side $f \equiv 1$ and $s = 3/4$, 
           leading to a solution with singular behavior near the re-entrant
            corner $(0,0)$. 
            Error graphs are shown for a $P_1$-FEM on uniformly refined meshes 
            in $\Omega$ and on meshes refined towards the corner.}
  \label{fig:conv_nonsmooth}
\end{minipage}%
\end{figure}

\subsubsection{Mesh refinement at $(0,0)$}
\label{S:MeshRef00}
In the next experiment we consider the case $f \equiv 1 \in \Ws$ for $s \in (1/2,1)$. 
As above  we use the graded mesh $\calG^{k}_{gr,\eta}$ in $(0,\Y)$, 
whereas we now use a hierarchy $\{ \calT^\ell_\beta \}_{\ell \geq 0}$
of bisection--tree meshes in $\Omega$ that are refined towards
the re-entrant corner at $(0,0)$ as constructed in \cite{GspMrin_IMAJNA2009}. 
In Figure~\ref{fig:conv_nonsmooth} 
we see linear convergence with respect to the 
mesh width as predicted by Theorem~\ref{thm:P1Graded} and 
in contrast to the results obtained with uniformly refined meshes.

To the best of the authors' knowledge, the nature of the geometric singularity 
of the solution at the re-entrant corner of the $L$-shaped domain 
for general $0<s<1$ is not known. 

\subsubsection{Sparse grid $P_1$-FEM with mesh refinement at $(0,0)$}
\label{S:SpGP1}
With the above described discrete spaces we are able 
to obtain optimal order convergence with respect to the number of 
degrees of freedom $\calN_\Omega$. 
Nevertheless, the number of degrees of freedom in the extended problem is 
of size $\calO(\calN_\Omega^{1+1/2}\log\log \calN_\Omega)$, i.e., it grows
superlinearly with respect to $\calN_\Omega$. 
To reduce the complexity to nearly linear,
we use sparse grids as explained in Section~\ref{S:sGP1FEM}; 
see in particular the combination formula described in Remark~\ref{rmk:CmbFrml}.  
The results are shown in Figure~\ref{fig:tensor_sparse_hp}. 
These show that the use of sparse grids dramatically reduces 
the number of degrees of freedom and is comparable to $hp$-FEM, 
which is described next. 

\begin{figure}
\begin{minipage}[c]{0.48\textwidth}
  \includegraphics[width=\textwidth]{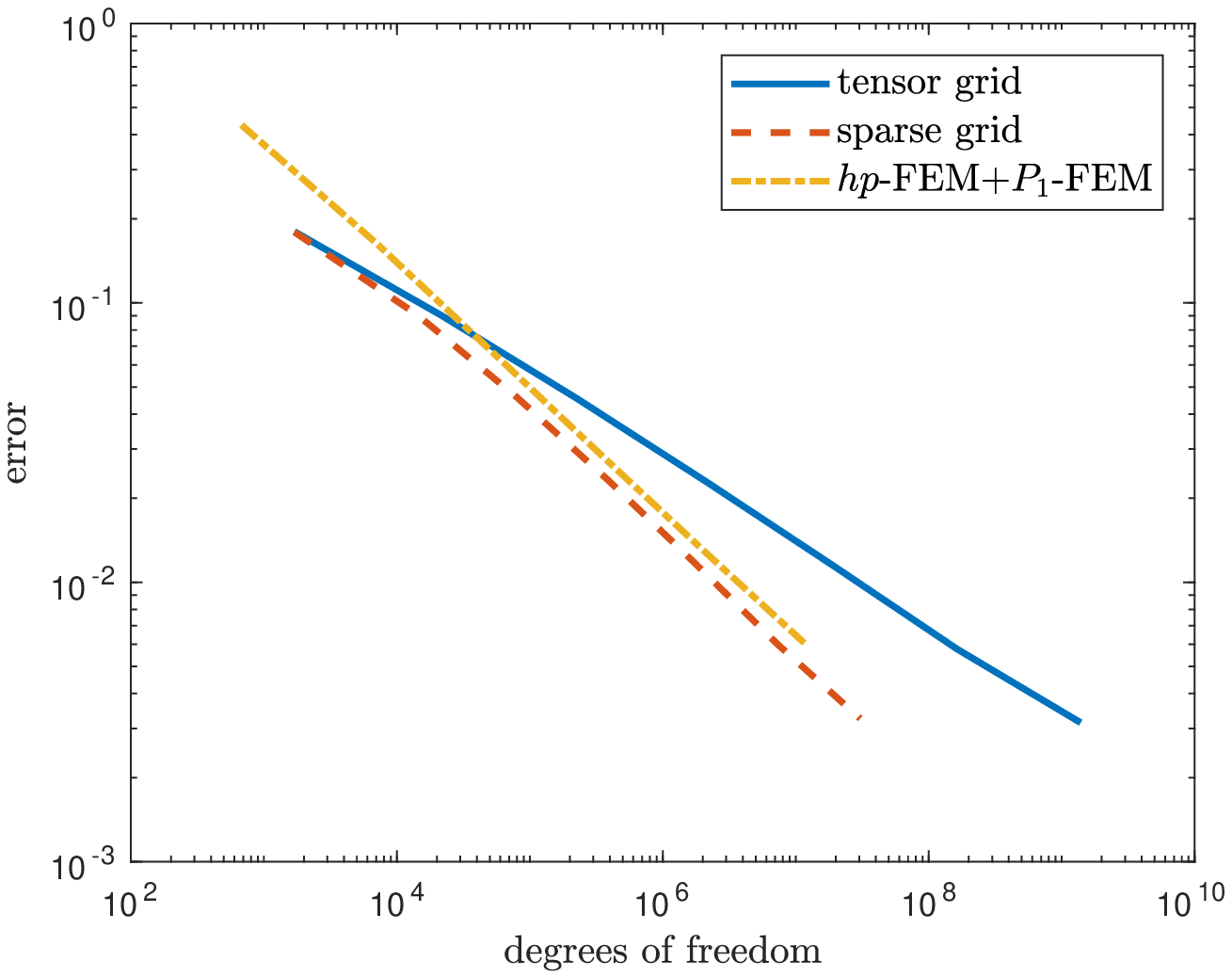}
  \caption{Convergence of the error in the energy norm versus
the number of degrees of freedom of the extended problem with the right-hand side $f \equiv 1$ and $s = 3/4$. 
$P_1$-FEM on corner-refined, regular simplicial meshes is used in $\Omega$. 
We compare  $hp$-FEM in $(0,\Y)$ with tensor grid and sparse grids, the latter two employing radical meshes in $(0,\Y)$.}
  \label{fig:tensor_sparse_hp}
  \end{minipage}
\hfill
  \begin{minipage}[c]{0.48\textwidth}
      \includegraphics[width=\textwidth]{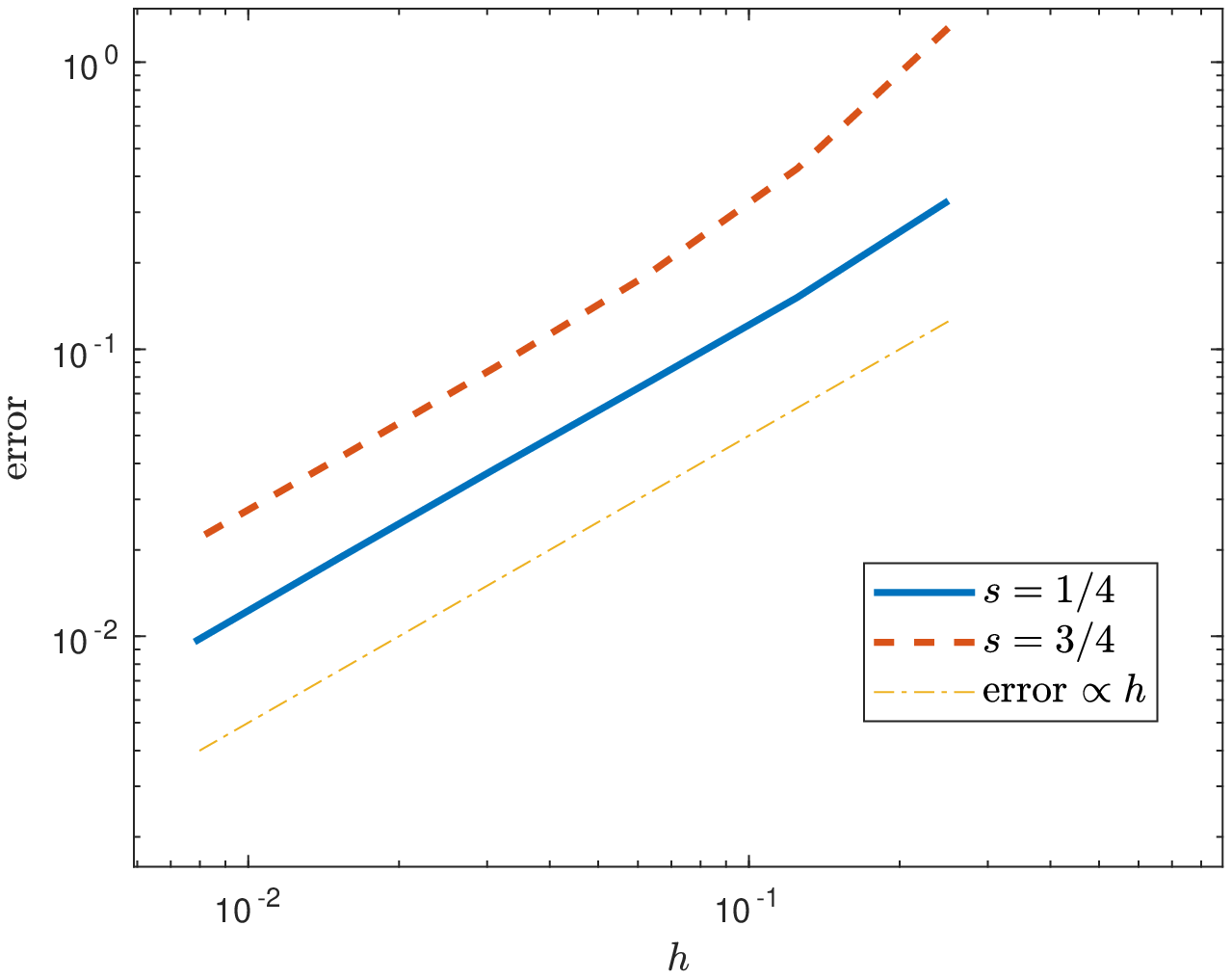}
  \caption{Convergence of the error in the energy norm versus the meshwidth 
           in $\Omega$ with the (smooth) exact solution given by \eqref{eq:exact_L} for two different values of $s$.
A $P_1$-FEM on uniformly refined meshes in $\Omega$ and $hp$-FEM in $(0,\Y)$ is used.}
\label{fig:conv_smooth_hp}
\end{minipage}
\end{figure}
\subsection{$P_1$-FEM in $\Omega$ with $hp$-FEM in $(0,\Y)$}
\label{S:P1xhpy}
We again start with the smooth solution \eqref{eq:exact_L}. 
$P_1$-FEM on uniformly refined meshes is used in $\Omega$, whereas in 
the extended direction $y$ we use $hp$-discretization on the geometric meshes 
$\calG^M_{geo,\sigma}$ on $[0,\Y]$. 
We use $\Y = \tfrac13|\log_2 h|$, 
$M = |\log_2 (h/2)|$, $\sigma = 0.05$,  and linear  degree vector $\bmr$ with slope $\slope = 2$. 
Linear convergence, as predicted by theory, 
can be seen in Figure~\ref{fig:conv_smooth_hp}.
%
We also consider the right-hand side $f \equiv 1$ for $s = 0.75$. 
This time we show convergence 
versus the number of degrees of freedom $\calN_{\Omega,\Y}$ 
in the extended problem and compare with $P_1$-FEM in $\Omega$ on 
so-called \emph{radical meshes}.
We obtain nearly optimal complexity as predicted by theory, but interestingly 
in this example slightly worse behavior compared with sparse grids. 
This is reported in Figure~\ref{fig:tensor_sparse_hp}.

%
\subsection{$hp$-FEM in $(0,1) \times (0,\Y)$}
\label{S:hpFE01}
We consider an example in one space dimension where $\Omega = (0,1)$,
with smooth, but incompatible right-hand side $f \equiv 1$. 
We comment that, according to the regularity results presented in \cite{MR3489634}, 
the solution behaves like
\begin{equation}\label{eq:AlgBdLay}
u(x') \sim
\begin{dcases}
\dist(x',\partial\Omega) + v(x') &\mbox{for}\quad s > 1/2\;, 
\\
\dist(x',\partial\Omega)^{2s} + v(x') & \mbox{for}\quad 0<s<1/2\;,
\end{dcases}
\end{equation}
with $v$ denoting a smoother remainder. 
Here, the singular support of $u$ is $\partial \Omega$, 
i.e.
$u$ exhibits an \emph{algebraic boundary singularity} 
(distinct from the smooth exponential boundary layers 
 arising in linear, elliptic-elliptic singular perturbations)
near the boundary of $\Omega$; see Figure~\ref{fig:blayer}. 

Again, as the exact solution is not known, 
we compare the numerical solution 
with an accurate solution obtained on a finer grid. 

In $(0,\Y)$, we use the same geometric $hp$-FEM space 
$\calG^M_{geo,\sigma}$ as in the previous section. 
The $hp$-FEM space $S^q_0(\Omega,\calT_L)$ 
is as described in Section~\ref{S:hp-hp}, where $q = M$ and $L = M$.  
Exponential convergence with respect to the polynomial degree 
$q$ as predicted by the theory is shown in Figure~\ref{fig:hp_sing_conv}.
\begin{figure}
  \begin{minipage}[c]{0.48\textwidth}
      \includegraphics[width=\textwidth]{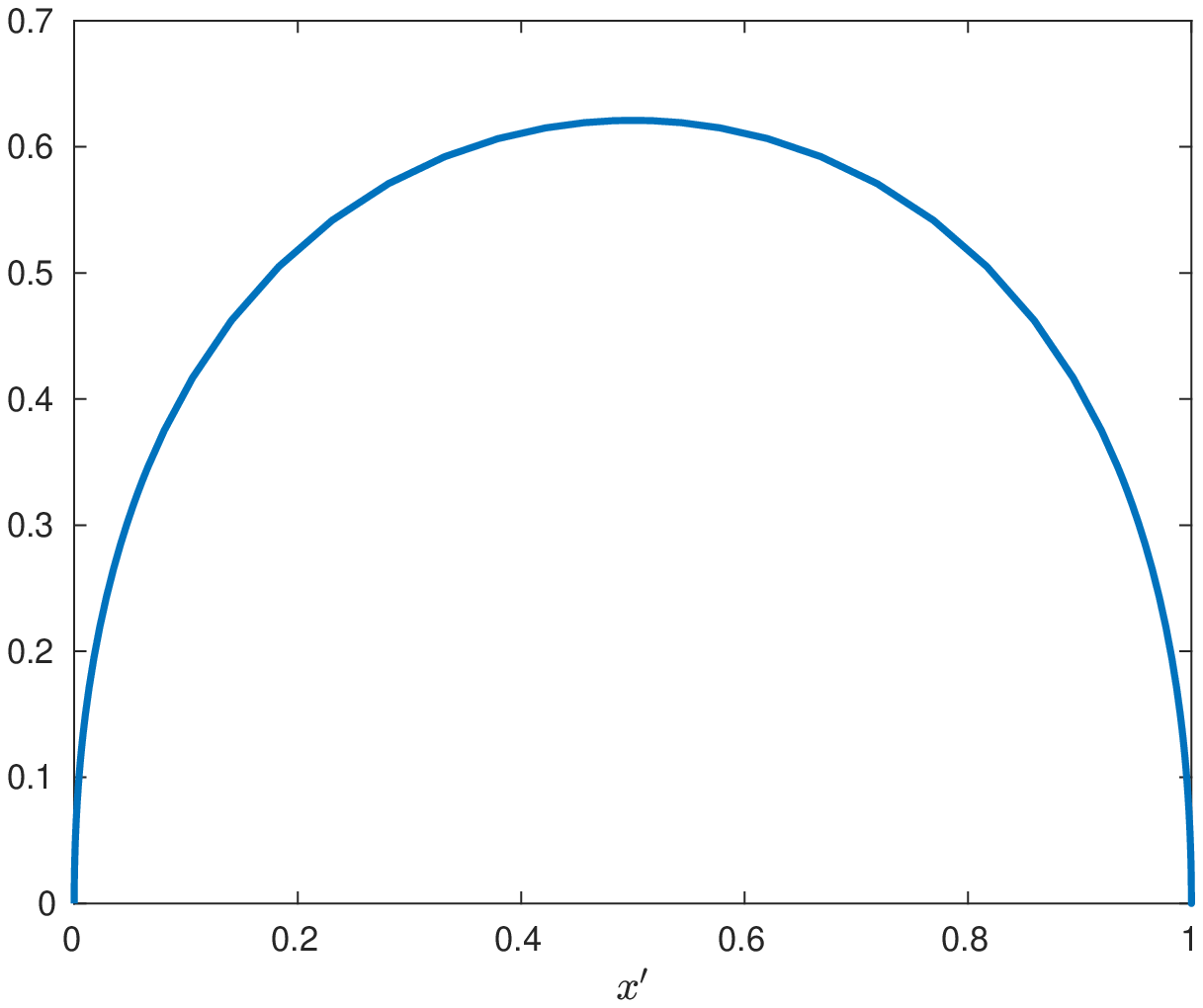}
  \caption{Solution on $\Omega = (0,1)$ with algebraic boundary singularity
           for $s = 0.25$ and  $f \equiv 1$.}
  \label{fig:blayer}
\end{minipage}
\hfill
\begin{minipage}[c]{0.48\textwidth}
  \includegraphics[width=\textwidth]{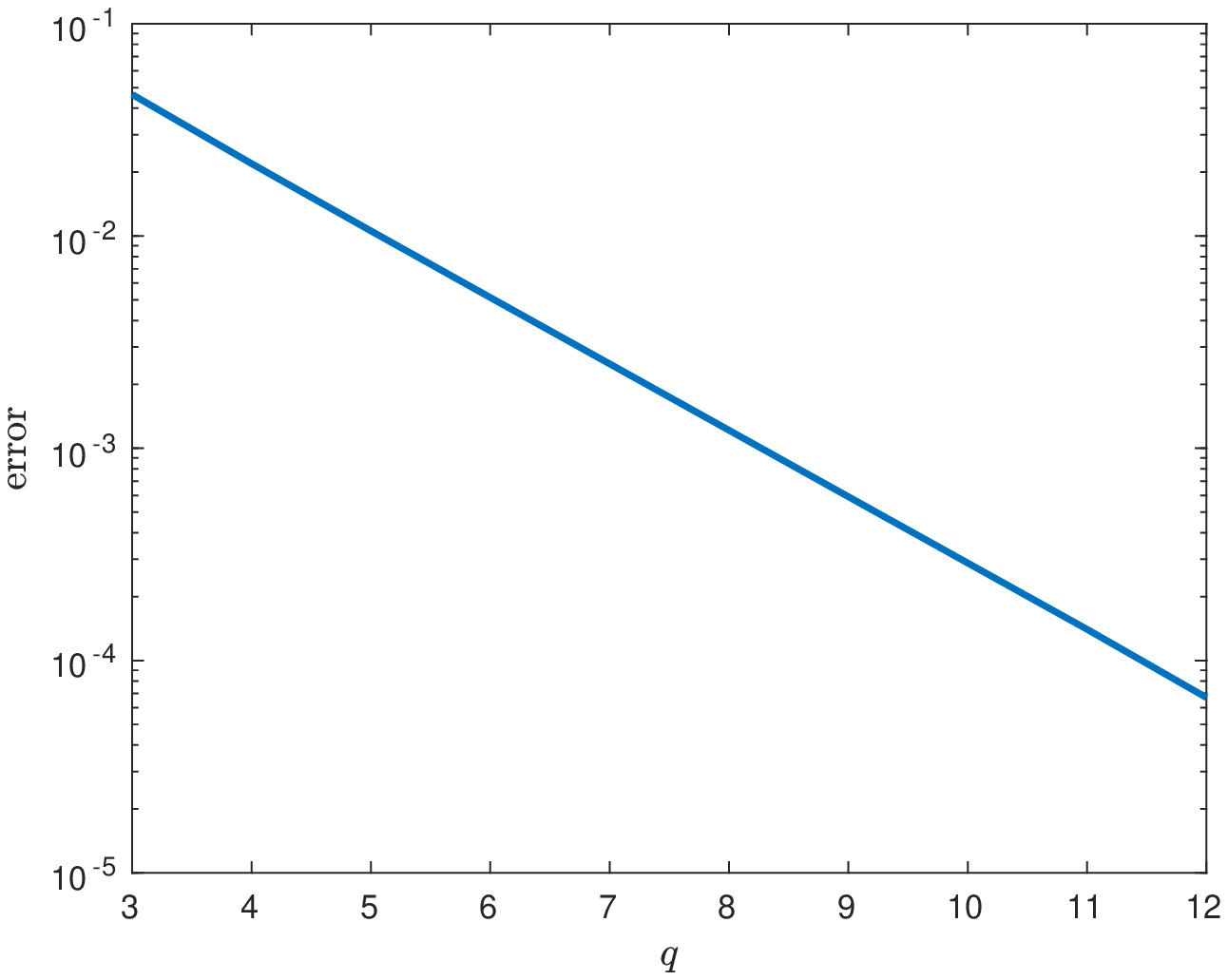}
  \caption{Convergence of error in energy norm of the 
     $hp$-FEM on $\Omega \times (0,\Y)$  against polynomial order $q$ for $s = 0.25$ and $f \equiv 1$.}
  \label{fig:hp_sing_conv}
\end{minipage}%
\end{figure}

In Figure~\ref{fig:AlgbLayer} we illustrate the behavior of the solution given by \eqref{eq:AlgBdLay}. 
We also investigate numerically the borderline case $s = 1/2$ 
in Figure~\ref{fig:AlgbLayer_half}.
\begin{figure}
  \begin{minipage}[c]{0.48\textwidth}
 \includegraphics[width=\textwidth]{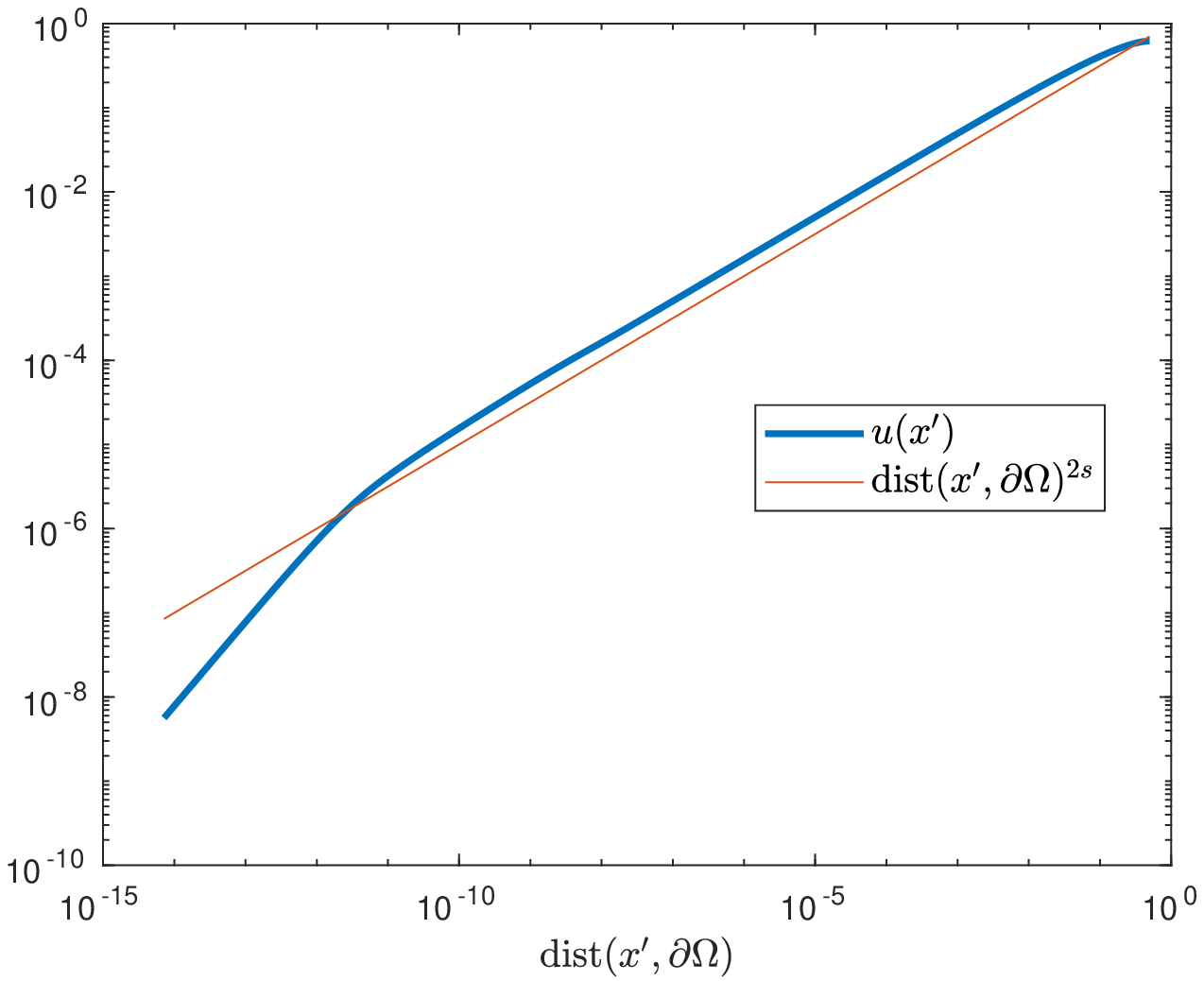}
\caption{
Numerical verification of the algebraic boundary singularity \eqref{eq:AlgBdLay} 
for $x'\in (0,1/2)$ and $s = 1/4$. 
Note that the change in the slope (from $1/2$ to $1$) 
near the boundary is a numerical artifact -- 
as the approximation is improved, the kink moves to the left.}
\label{fig:AlgbLayer}
\end{minipage}
\hfill
\begin{minipage}[c]{0.48\textwidth}
   \includegraphics[width=\textwidth]{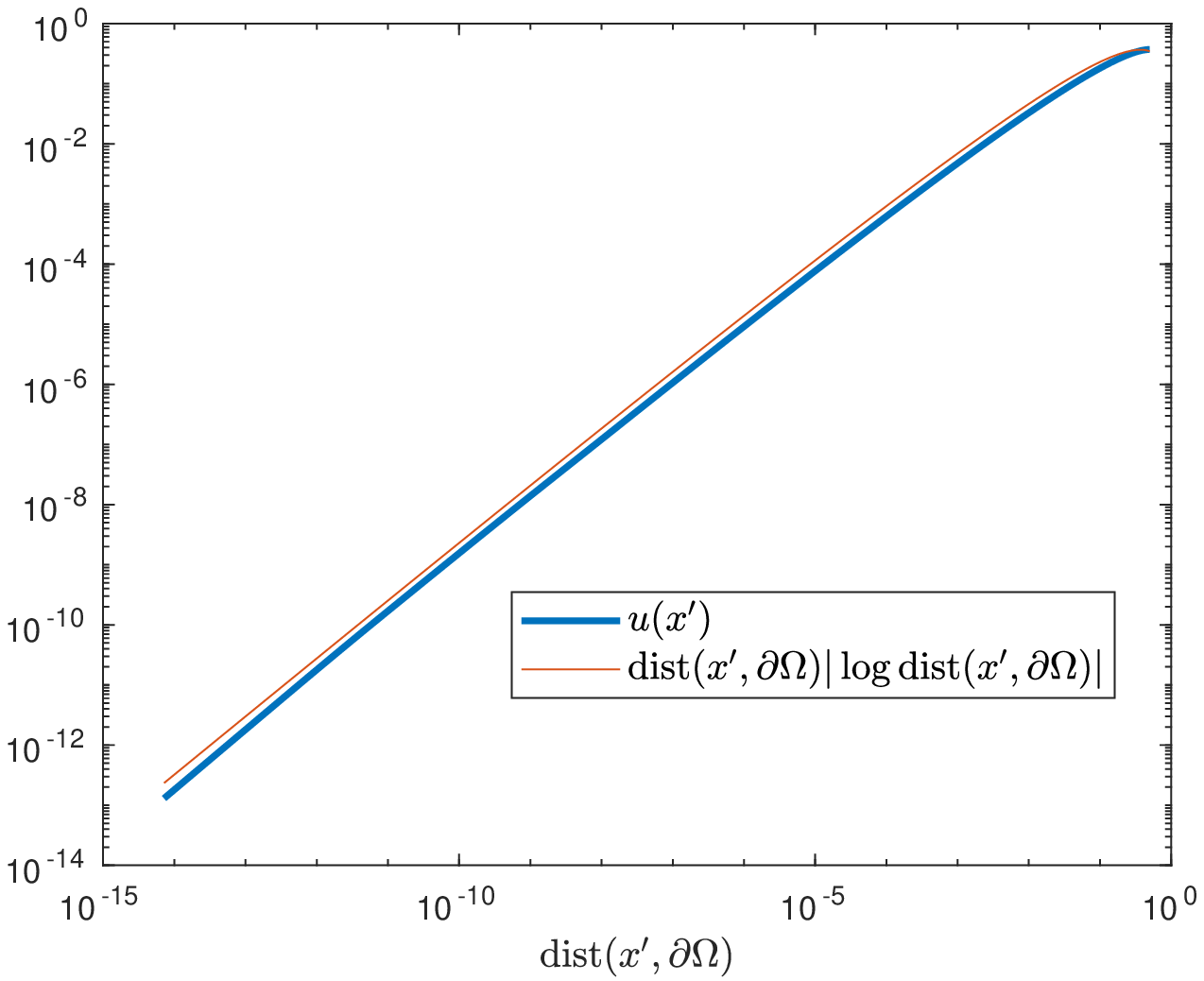}
\caption{
Boundary behavior for $s = 1/2$. 
Here the numerical solution is compared with $\dist(x',\partial\Omega)|\log \dist(x',\partial\Omega)|$.
}
\label{fig:AlgbLayer_half}
\end{minipage}
\end{figure}
Even if the domain $\Omega$ is smooth, 
$u$ exhibits in general a \emph{boundary singularity} 
with singular support $\partial \Omega$. 
For $s=1/2$ and polygonal $\Omega$, 
this boundary singularity is the trace, at $y=0$, of
an \emph{edge singularity} of the solution $\ue$ of the extended problem \eqref{alpha_harm_intro}
in $\C$ whose structure is known; see, for instance, \cite{CDN12} and the references therein.
Here, $hp$-FE approximations with geometric boundary layer meshes 
in $\Omega$ naturally appear as $y=0$ slices of $d+1$-dimensional
geometric meshes in $\C_{\Y}$ as developed in \cite{ScSc1}. 
%
\section{Conclusions and generalizations}
\label{S:ConclRmkGen}
%
In the course of this work, 
we introduced and analyzed four different types of \emph{local} FEM
discretizations for the numerical approximation of the 
spectral fractional diffusion problem \eqref{fl=f_bdddom}
%
%
in a bounded polygonal domain $\Omega \subset {\mathbb R}^2$ with
straight sides (or a bounded interval $\Omega \subset {\mathbb R}$),
subject to homogeneous Dirichlet boundary conditions.
Our local FEM schemes are based on the Caffarelli-Silvestre extension 
of \eqref{fl=f_bdddom} from $\Omega$ to $\C$.
Our main contributions are the following.

\begin{enumerate}[$\bullet$]
\item 
{\bf General operators and nonconvex domains.}
We proposed a tensor product argument for continuous, piecewise linear FEM 
in both $(0,\infty)$, and in $\Omega$ with proper mesh refinement towards $y=0$
and the corners $\bmc$ of $\Omega$.
Assuming that $A$ and $c$ are as in Proposition~\ref{prop:H2Regw},
we showed that the approximate solution to problem \eqref{fl=f_bdddom}
exhibits a 
near optimal asymptotic convergence rate $\calO(h_\Omega|\log h_\Omega|)$
subject to the optimal regularity $f \in {\mathbb H}^{1-s}(\Omega)$.
However, if $\calN_\Omega$ denotes the number of degrees of freedom 
in the discretization in $\Omega$, then
the total number of degrees of freedom grows asymptotically 
as $\calO(\calN_\Omega^{3/2})$ (ignoring logarithmic factors).

This result is analogous to the bounds obtained in \cite{NOS} 
for 
convex domains $\Omega$,
thus generalizing these results to
nonconvex, polygonal domains $\Omega\subset {\mathbb R}^2$.
The error analysis proceeded by a 
suitable form of quasi-optimality in Lemma \ref{lem:GalErr}
and the construction of a tensor product FEM interpolant in the 
truncated cylinder $\calC_{\Y}$. 
This interpolant was constructed from a nodal, continuous and piecewise linear
interpolant $\pi^{1,\ell}_\eta$ 
with respect to the extended variable $y\in (0,\Y)$ on a 
radical-geometric mesh, 
and from an $L^2(\Omega)$ projection $\Pi^\ell_\beta$ in $\Omega$
onto the space of continuous, piecewise linears on a suitable sequence 
$\{ \calT^\ell_\beta \}_{\ell\geq 0}$ of regular nested, bisection-tree,  
simplicial meshes with refinement towards the corners $\bmc$ of $\Omega$.
A novel result from \cite{GspzHeineSiebert2016} implies that $\Pi^\ell_\beta$
is also uniformly $H^1(\Omega)$-stable with respect to the refinement level $\ell$.
The present construction would likewise work with any other concurrently $L^2(\Omega)$ 
and $H^1(\Omega)$ stable family of quasi-interpolation operators,
e.g. those of \cite{SZ:90}.

\item 
{\bf Sparse tensor grids.}
While the regularity requirement $f\in {\mathbb H}^{1-s}(\Omega)$ is, essentially, 
minimal for first order convergence in $\Omega$, 
the complexity $\calO(\calN_\Omega^{3/2})$ 
due to the extra degrees of freedom in the extended variable
results in superlinear work with respect to $\calN_\Omega$.
We therefore proposed in Section~\ref{S:sGP1FEM} a 
\emph{novel, sparse tensor product FE discretization} of the truncated, extended problem. 
Using novel regularity results for the extended solution in $\calC$ in weighted spaces
and sparse tensor product constructions of the interpolation operators
$\pi^{1,\ell}_\eta$ and $\Pi^\ell_\beta$ in $\Omega$, 
we proved that this approach still delivers 
FEM solutions of \eqref{fl=f_bdddom}
with essentially first order convergence rates
(i.e., up to logarithmic factors), under 
the slightly more stringent regularity $f\in {\mathbb H}^{1-s+\nu}(\Omega)$, $\nu > 0$, 
while requiring essentially only $\calO(\calN_\Omega)$ many degrees of freedom.

\item 
{\bf $hp$-FE approximation in the extended variable.}
The solution of the extended problem being analytic with respect to the extended
variable $y>0$ allows for designing $hp$-FE approximations with
respect to the variable $y$ on geometric meshes and proving
\emph{exponential convergence rates}
even under finite regularity of $A$, $c$ and $f$ 
as specified in Proposition \ref{prop:H2Regw}.
The proof is based on a novel framework of countably normed, 
weighted Bochner spaces in $(0,\infty)$ 
to quantify the analytic regularity with respect to $y$. 
We also developed a corresponding family of $hp$-interpolation operators 
\emph{that affords exponential convergence rates} in the extended variable.

Upon tensorization with the projectors $\Pi^\ell_\beta$ 
onto spaces of continuous, piecewise linear finite elements
on simplicial, bisection-tree meshes with corner refinement in $\Omega$, 
we obtained a class of FE schemes that afford essentially optimal,
linear convergence rate in $\Omega$ under the regularity $f\in {\mathbb H}^{1-s}(\Omega)$,
also for nonconstant coefficients and nonconvex polygonal domains $\Omega$, 
thereby generalizing \cite{MPSV17}.
We remark that the convergence rate bounds essentially equal
    the results of so-called wavelet Galerkin discretizations for 
    the integral fractional Laplacian (see \cite{SchneidR98,SSBEM:11} 
    and the references therein). Wavelet Galerkin methods 
    are based on direct, ``nonlocal''
    Galerkin discretization of integro-differential operators, which
    entail numerical evaluation of 
    singular integrals and dense stiffness matrices,
    neither of which occurs in the present local FE approach.
    However, these methods can also cope with variable exponent 
    $s(x')$, which seems to be beyond reach with the present
    approach; see \cite{SRS:10,KarniadEtAl:17} and the references therein.
We also point out that the \emph{boundary compatibility of $f$}, which is 
implicit in the assumption $f\in {\mathbb H}^{1-s}(\Omega)$, 
is essential in the arguments in Section \ref{S:FEMy} as well as 
in the results of \cite{NOS,MPSV17,BP:13}.

\item 
{\bf Diagonalization.}
We developed a novel diagonalization approach which allows us to decouple
the second order elliptic system in $\C_\Y$,
resulting from \emph{any} Galerkin semidiscretization in the extended variable $y$
(either of $h$-FEM or of $hp$-FEM type) of the truncated problem,
into a finite number of decoupled, singularly perturbed, second order elliptic problems
in $\Omega$. This approach is instrumental for both the design
of $hp$-FEMs in $\Omega$ in Section~\ref{sec:hpx} as well as the
implementation of parallel and inexact solvers in Section~\ref{S:NumExp}.

\item 
{\bf $hp$-FEMs.}
Exploiting results on \emph{robust exponential convergence of $hp$-FEMs}
for second order, singularly perturbed problems
\cite{MR1664765,melenk-schwab98,melenk97,melenk02}, 
and tensorization with the exponentially convergent $hp$-FEM in $(0,\Y)$ 
resulted in {\it exponential convergence for analytic input data $A$, $c$, $f$, and $\Omega$}
for {\it incompatible} forcing $f$ 
(i.e. $f\in H^{1-s}(\Omega)$ but $f\notin\mathbb{H}^{1-s}(\Omega)$).
The boundary incompatibility of $f$ leads to the formation of a 
strong boundary singularity for $0 < s \leq 1/2$ 
and 
a  weaker one for $s > 1/2$ with $\partial \Omega$ analytic,
which is a genuine fractional diffusion effect.
Our analysis in Section \ref{S:2d-bdy-geomesh} revealed that
for incompatible data $f$ in space dimension $d>1$, 
\emph{anisotropic, geometric meshes in $\Omega$}
capable of resolving boundary layers over a wide range of length scales, 
are generally indispensable, even if $\partial \Omega$ is smooth.
Section \ref{S:NumExp} displays an example.
\end{enumerate}
%
The following generalizations of the results of the present work suggest themselves. 
\begin{enumerate}[$\bullet$]
\item {\bf Boundary conditions.}
The present analysis was limited to polygonal domains in two space dimensions
and to homogeneous Dirichlet boundary conditions. 
The extension \eqref{alpha_harm_intro} is also available for homogeneous Neumann
boundary conditions in \cite[Section 7]{MR3489634} and for combinations
of Dirichlet and Neumann boundary conditions on parts of $\partial\Omega$.
Solutions $\ue$ of these extensions also admit the representation \eqref{eq:SepVar},
so that the analytic regularity results in Section \ref{S:analytic-regularity}
extend almost verbatim.
Likewise, all regularity results in Section \ref{S:FEMy}, being based
on \cite{BacutaLiNistor2016},  extend verbatim to
homogeneous Neumann and Dirichlet-Neumann boundary conditions on polygonal domains.

\item {\bf Higher dimensions and elements of degree $q\geq 2$ in $\Omega$.}
Analogous results as in Section \ref{S:FEMy} 
hold for polyhedral domains $\Omega \subset {\mathbb R}^3$ 
with plane faces, using corresponding regularity results for the Dirichlet 
Laplacian in weighted spaces in the polyhedron $\Omega$,
combined with corresponding FE projections on anisotropically refined
FE meshes (with corner and edge-refinements in $\Omega)$, as described in \cite{Apel:99}.

Returning to polygons, if we consider piecewise polynomials of degree $q \geq 2$ 
on families of simplicial meshes which are sufficiently
refined towards the vertices $\bmc$ of $\Omega$, we expect 
algebraic convergence rates higher than for linear elements
\emph{provided} the forcing $f\in {\mathbb H}^{q-s}(\Omega)$. 
This implies, in particular, that $f$ should satisfy besides $f\in H^{q-s}_{loc}(\Omega)$
also certain higher-order boundary compatibility on $\partial\Omega$,
a consequence of the eigenfunction expansions used in our regularity analysis. 
\end{enumerate}
%
\appendix 
\section{Proof of Lemma~\ref{lemma:interpolant-on-geometric-mesh}}
\label{S:proof-interpolant-geo-mesh}
%
We will only show \eqref{eq:B1beta-estimate-10}, \eqref{eq:B1beta-estimate-20} 
as the estimates \eqref{eq:B2beta-estimate-10}, \eqref{eq:B2beta-estimate-20} 
are proved using similar arguments; see, for instance, the proof of \cite[Theorem~{8}]{apel-melenk17}. 
We distinguish between the first element $I_1$, the terminal element $I_M$, 
and the remaining ones. 
We write $h_i = |I_i|$. 
We simplify the exposition by assuming $X = {\mathbb R}$.  
It is convenient to define, for each interval $I_i$, $i=2,\ldots,M$, 
the quantity $C_i$ by 
\begin{equation}
\label{eq:Ci2}
C_i^2:= \sum_{\ell=1}^\infty (2 K_u)^{-\ell} \frac{1}{\ell!^2} 
\|u^{(\ell)}\|^2_{L^2(\omega_{\alpha + 2\ell-2\beta,\gamma}, I_i)}.  
\end{equation}
We observe that, since $u \in {\mathcal B}_{\beta,\gamma}^1(C_u,K_u)$,
\begin{equation}
\sum_{i=2}^M C_i^2 \leq 2 C_u^2,
\end{equation}
where, we recall that the space ${\mathcal B}_{\beta,\gamma}^1(C_u,K_u)$ corresponds to a class of analytic functions and is defined as in \eqref{eq:Bbeta1}. We begin the proof with an auxiliary result about linear interpolation on the reference element.

\begin{lemma}[linear interpolant]
\label{lemma:I1}
Let $X$ be a Hilbert space,   
$\widehat K = (0,1)$, and 
let $\widetilde  \pi_1$ be the linear interpolant in the points $1/2$, $1$.
Let $\alpha > -1$ and $\delta \leq 1$. 
Then, for $u \in C((0,1];X)$ and provided the terms on the right-hand side 
are finite, we have
\begin{align}
\label{eq:lemma:I1-10}
\int_{\widehat K} y^\alpha \|u - \widetilde \pi_1 u\|_X^2\, \diff y &\lesssim 
\int_{\widehat K} y^{\alpha+2\delta} \|u^\prime\|_X^2\, \diff y , \\
\label{eq:lemma:I1-20}
\int_{\widehat K} y^\alpha \|(u - \widetilde \pi_1 u)^\prime\|_X^2\, \diff y &\lesssim 
\int_{\widehat K} y^{\alpha+2\delta} \|u^{\prime\prime}\|_X^2\, \diff y,
\end{align}
where the hidden constant is independent of $u$.
\end{lemma}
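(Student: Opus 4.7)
The key identities follow from setting $r := u - \widetilde\pi_1 u$: since $\widetilde\pi_1 u$ is affine on $\widehat K$, we have $r'' = u''$ in $\widehat K$, while the two interpolation conditions give $r(1/2) = r(1) = 0$. The single technical tool behind both estimates is the scalar weighted Hardy inequality
$$
\int_0^1 y^\alpha \left(\int_y^1 g(t)\,\diff t\right)^2 \diff y
\leq C_\alpha \int_0^1 y^{\alpha+2} g(y)^2\,\diff y, \qquad \alpha > -1,
$$
proved by integration by parts in the left-hand side (the boundary term at $y=0$ vanishes because $\alpha > -1$, and at $y=1$ because $\int_y^1 g\,\diff t$ vanishes there) followed by Cauchy-Schwarz. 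By tensorization this extends to $X$-valued integrands. The reduction to $\delta \le 1$ is then immediate: since $y < 1$, we have $y^{\alpha+2} \leq y^{\alpha+2\delta}$, so it suffices to establish both estimates in the extremal case $\delta = 1$.

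For \eqref{eq:lemma:I1-20}, Rolle's theorem furnishes some $\xi \in (1/2,1)$ with $r'(\xi) = 0$, hence $r'(y) = \int_\xi^y u''(t)\,\diff t$ for every $y \in (0,1]$. On $(0,1/2)$ this gives $\|r'(y)\|_X \leq \int_y^1 \|u''(t)\|_X\,\diff t$, and the Hardy inequality directly yields the desired bound. On $(1/2,1)$ the weight $y^\alpha$ is bounded above and below by positive constants, and a direct application of Cauchy-Schwarz on an interval of length at most $1/2$ yields $\int_{1/2}^1 y^\alpha \|r'\|_X^2\,\diff y \lesssim \int_{1/2}^1 \|u''\|_X^2\,\diff t \lesssim \int_{1/2}^1 y^{\alpha+2}\|u''\|_X^2\,\diff y$.

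For \eqref{eq:lemma:I1-10}, from $r(1/2) = 0$ we write $r(y) = \int_{1/2}^y r'(s)\,\diff s$. A weighted Poincar\'e inequality, itself obtained by applying the Hardy inequality separately on $(0,1/2)$ (where $\|r(y)\|_X \le \int_y^1 \|r'\|_X\,\diff s$) and on $(1/2,1)$ (where the weight is equivalent to a constant), gives
$$
\int_0^1 y^\alpha \|r(y)\|_X^2\,\diff y
\lesssim \int_0^1 y^{\alpha+2} \|r'(y)\|_X^2\,\diff y.
$$
Since $r' = u' - c$ with $c := 2(u(1) - u(1/2)) \in X$ the slope of $\widetilde\pi_1 u$, the triangle inequality bounds the right-hand side by $2\int_0^1 y^{\alpha+2}\|u'\|_X^2\,\diff y + 2\|c\|_X^2 \int_0^1 y^{\alpha+2}\,\diff y$. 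The first term is already dominated by $\int_0^1 y^{\alpha+2\delta}\|u'\|_X^2\,\diff y$; for the second, Cauchy-Schwarz gives $\|c\|_X^2 \leq 2 \int_{1/2}^1 \|u'(t)\|_X^2\,\diff t$, and since $t^{\alpha+2\delta}$ is bounded below by a positive constant on $(1/2,1)$, we conclude $\|c\|_X^2 \lesssim \int_0^1 t^{\alpha+2\delta}\|u'\|_X^2\,\diff t$.

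The argument is essentially routine; the only delicate point is the justification of the boundary contributions in the integration by parts for the Hardy inequality when $\alpha \in (-1,0)$, which is handled in the standard way by approximation from smooth, compactly supported $g$. All other steps are elementary.
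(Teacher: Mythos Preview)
Your argument is essentially correct and follows the same Hardy-inequality strategy as the paper, but there is one genuine slip: you invoke Rolle's theorem to obtain $\xi \in (1/2,1)$ with $r'(\xi)=0$ while simultaneously writing $\|r'(y)\|_X$, i.e., you are applying Rolle to an $X$-valued function. Rolle's theorem fails for Hilbert-space-valued maps (already for $X=\mathbb{R}^2$: take $r(t)=(\cos 2\pi t,\sin 2\pi t)$ on $[0,1]$), so as written the step does not go through when $\dim X>1$. The fix is cheap: either (a) prove the scalar case first and then tensorize the \emph{final} inequality by expanding in an orthonormal basis of $X$, which is exactly what the paper does, or (b) replace Rolle by the identity
\[
r'(y)=u'(y)-2\!\int_{1/2}^{1}u'(t)\,\diff t
      =2\!\int_{1/2}^{1}\!\int_{t}^{y}u''(\tau)\,\diff\tau\,\diff t,
\]
which is valid for $X$-valued $u$ and leads to the same Hardy estimate. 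The paper uses (b).

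Apart from this, the two proofs coincide in structure. Your treatment of \eqref{eq:lemma:I1-10} (Hardy plus a separate bound on the constant slope $c$ of the interpolant) is exactly the paper's argument; the paper anchors at $r(1)=0$ rather than $r(1/2)=0$, but this is immaterial. Your reduction to $\delta=1$ via $y^{\alpha+2}\le y^{\alpha+2\delta}$ on $(0,1)$ is the same as the paper's.
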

\begin{proof}
For notational simplicity, we will prove the lemma only for the case $X = {\mathbb R}$. 

We begin with the proof of \eqref{eq:lemma:I1-10}. Since $(u - \widetilde \pi_1 u)(1) = 0$ we have,
for $y \in \widehat K$,
\[
(u - \widetilde \pi_1 u)(y) = \int_1^y (u - \widetilde \pi_1 u)^\prime(t)\,\diff t, 
\]
so that 
$$
\int_0^1 y^\alpha |u - \widetilde\pi_1 u|^2\, \diff y \leq 
2 \int_0^1 y^\alpha \left| \int_{y}^1 |u^\prime(t)|\,\diff t\right|^2 \diff y + 
2 \int_0^1 y^\alpha \left| \int_{y}^1 |(\widetilde \pi_1 u)^\prime(t)|\,\diff t\right|^2 \diff y.  
$$

From Hardy's inequality (e.g., \cite[Chapter~2, Theorem~{3.1}]{devore93}) we infer
\begin{align*}
\int_0^1 y^\alpha \left| \int_{y}^1 |u^\prime(t)|\,\diff t\right|^2\, \diff y 
\leq (\alpha+1)^{-2} \int_0^1 y^{\alpha+2} |u^\prime(y)|^2\, \diff y. 
\end{align*}

From
$
(\widetilde \pi_1 u)^\prime = 2\int_{1/2}^1 u^\prime(t)\,\diff t 
$
we obtain
$
| (\widetilde \pi_1 u)^\prime|^2  \leq C \int_{1/2}^1 t^{\alpha+2\delta} |u^\prime(t)|^2\,\diff t 
$
and therefore, in view of $\alpha > -1$, the estimate
$$
\int_0^1 y^\alpha | (\widetilde \pi_1 u)'|^2\, \diff y \lesssim \int_0^1 y^{\alpha+2} |u^\prime(y)|^2\, \diff y. 
$$
This concludes the proof of (\ref{eq:lemma:I1-10}) for the case $\delta = 1$. Since the integration
range is $y \in \widehat{K} = (0,1)$, we may replace $y^{\alpha+2}$ by $y^{\alpha +2 \delta}$. 

Let us now prove \eqref{eq:lemma:I1-20}. Again, it suffices to consider the limiting case $\delta = 1$
and use Hardy's inequality. We write
\begin{align*} 
(u - \widetilde \pi_1 u)^\prime(y)  & = u^\prime(y) - 2 \int_{1/2}^1 u^\prime(t)\, \diff t
 = 2\int_{1/2}^1 \left( u^\prime(y) - u^\prime(t) \right)\,\diff t \\
&= 
2\int_{1/2}^1 \int_{t}^y u^{\prime\prime}(\tau)\,\diff\tau\,\diff t . 
\end{align*}
Therefore,
\begin{align*} 
  \int_{0}^1  y^\alpha |(u -  & \widetilde\pi_1 u)^\prime(y)|^2\,\diff y  = 
  4 \int_{0}^1 y^\alpha \left| 
    \int_{1/2}^1 \int_{t}^y u^{\prime\prime}(\tau)\,\diff \tau\, \diff t 
  \right|^2\, \diff y \\
  & \leq 2 \int_{1/2}^1 
    \int_{0}^1 y^\alpha \left| \int_{t}^y |u^{\prime\prime}(\tau)|^2\,\diff \tau \right|^2 \diff y
  \diff t\\
  & \lesssim \int_{1/2}^1 \int_{0}^1 y^\alpha 
    \left[ \left| \int_{y}^1 |u^{\prime\prime}(\tau)|^2\,\diff\tau\right|^2 
   +  \left| \int_{t}^1 |u^{\prime\prime}(\tau)|^2\,\diff \tau \right|^2 \right] \diff y \diff t\\
  &\lesssim \int_{0}^1 y^{\alpha+2} |u^{\prime\prime}(y)|^2\,\diff y  + 
  \int_{1/2}^1 y^{\alpha +2} |u^{\prime\prime}(y)|^2\, \diff y, 
\end{align*}
where, in the last step we applied Hardy's inequality.

The Lemma is thus proved.
\end{proof}

With this auxiliary result at hand we can estimate $I_1$ as follows: 
scaling the estimate \eqref{eq:lemma:I1-10} gives 
\begin{equation}
\label{eq:hp-approx-on-I_1}
\|u - \pi^\bmr_{y} u\|_{L^2(\omega_{\alpha,0},I_1)} 
\leq C h_1^{\beta} \|u^\prime\|_{L^2(\omega_{\alpha+2-2\beta,0},I_1)}. 
\end{equation}
The assumption $|I_1| = \sigma^M \Y \leq 1$ implies that we may insert the weight $e^{\gamma y}$ 
on both sides of \eqref{eq:hp-approx-on-I_1}. 

We now proceed the estimation over the elements away from the origin, 
\ie on $I_i$, $i=2,\ldots,M$. 
These elements satisfy $h_i \sigma/(1-\sigma)=  \dist(I_i,0) $. 
For $I_i = (y_{i-1}, y_i)$ the pull-back 
$\widehat u_i: = u|_{I_i} \circ F_{I_i}$ satisfies 
\begin{align*}
& 
\|\widehat u_i^{(\ell+1)}\|^2_{L^2(-1,1)}   
 =(h_i/2)^{-1+2(\ell+1)} \|u^{(\ell+1)}\|^2_{L^2(I_i)}  
\\
& \leq (h_i/2)^{-1+2(\ell+1)} e^{-\gamma y_{i-1}} 
\max_{y \in I_i} y^{-\alpha-2(\ell+1)+2\beta} 
\|u^{(\ell+1)}\|^2_{L^2(\omega_{\alpha+2(\ell+1)-2\beta,\gamma},I_i)} 
\\
& \lesssim e^{-\gamma y_{i-1}} h_i^{-1 + 2 (\ell+1)} 
h_i^{-\alpha - 2 (\ell+1)+2\beta} (2 (1-\sigma))^{-2(\ell+1)}  
C_i^2 (2 K_u)^{2(\ell+1)} (\ell+1)!^2,
\end{align*}
where in the last step we have used \eqref{eq:Ci2}. The assumption on the 
operator $\widehat \Pi_r$, 
defined on the reference element, 
then yields the existence of a $b>0$ that depends 
solely on $K_u$ and $\sigma$, for which
$$
\| \widehat u -  \widehat \Pi_{\bmr_i}\widehat u\|_{L^2(-1,1)} 
\lesssim 
 C_i e^{-\gamma y_{i-1}} e^{-b \bmr_i} h_i^{-(1+\alpha)/2 + \beta}
\;.
$$
Scaling back to $I_i$ and using again $h_i \sim \operatorname*{dist}(I_i,0)$ yields 
$$
\|u - \pi^{\bmr}_{y} u\|^2_{L^2(\omega_{\alpha,\gamma},I_i)} \leq C h_i^{2\beta} C_i^2 e^{-2 b \bmr_i}. 
$$
Summation over $i$ and taking the slope of the linear degree vector sufficiently large 
(see, for instance, the proof of \cite[Theorem~{8}]{apel-melenk17} for details) 
gives 
$$
\sum_{i=2}^M \|u - \pi^{\bmr}_y u\|^2_{L^2(\omega_{\alpha,\gamma},I_i)} \lesssim {\Y}^{2\beta} e^{-2 b' M} 
$$
for suitable $b' > 0$. Combining this with 
\eqref{eq:hp-approx-on-I_1} gives the desired \eqref{eq:B1beta-estimate-10}. 

It remains to prove \eqref{eq:B1beta-estimate-20}. 
We begin with a preparatory result.

\begin{lemma}[exponential decay]
\label{lemma:inftyExp}
Let $X$ be a Hilbert space and let $\delta \in {\mathbb R}$, $\gamma > 0$, $\Y_0 > 0$. 
 Then the following holds for $u \in C^1((\Y_0,\infty);X)$ in items 
(\ref{item:lemma:inftyExp-i}), (\ref{item:lemma:inftyExp-ia}) and for 
 $u \in C^2((\Y_0,\infty);X)$ in items 
(\ref{item:lemma:inftyExp-ii}), (\ref{item:lemma:inftyExp-iia}) with implied constants depending 
solely on $\delta$, $\gamma$, and $\Y_0$: 
 \begin{enumerate}[(i)]
 \item 
 \label{item:lemma:inftyExp-i}
If $\lim_{y \rightarrow \infty} u(y) = 0$ and 
$\|u^\prime\|_{L^2(\omega_{\delta,\gamma},(\Y_0,\infty);X)} < \infty$, then 
\begin{equation}
\label{eq:lemma:inftyExp-i}
\|u(\Y)\|_X \lesssim 
 \Y^{-\delta/2} \exp(-\Y \gamma/2) \|u^\prime\|_{L^2(\omega_{\delta,\gamma},(\Y,\infty);X)} 
\qquad \forall \Y \ge \Y_0. 
\end{equation}
\item 
 \label{item:lemma:inftyExp-ia}
If 
$\sum_{j=0}^1 \|u^{(j)} \|_{L^2(\omega_{\delta,\gamma},(\Y_0,\infty);X)} < \infty,
$
then 
$\lim_{y \rightarrow \infty} u(y) = 0.$ 
 \item 
 \label{item:lemma:inftyExp-ii}
If $\lim_{y \rightarrow \infty} u^{(j)}(y)=0$ for $j=0$, $1$ 
and $\|u^{\prime\prime}\|_{L^2(\omega_{\delta,\gamma},(\Y_0,\infty);X)} < \infty$, then 
 \begin{equation}
 \label{eq:lemma:inftyExp-ii}
 \|u(\Y)\|_X \lesssim \Y^{-\delta/2} \exp(-\Y \gamma/2) 
 \|u^{\prime\prime}\|_{L^2(\omega_{\delta,\gamma},(\Y,\infty);X)} 
\qquad \forall \Y \ge \Y_0. 
 \end{equation}
\item 
 \label{item:lemma:inftyExp-iia}
If $\sum_{j=0}^2 \|u^{(j)} \|_{L^2(\omega_{\delta,\gamma},(\Y_0,\infty);X)} < \infty$, 
 then $\lim_{y \rightarrow \infty} u(y)  = \lim_{y\rightarrow \infty} u^\prime(y) = 0$. 
 \end{enumerate}
\end{lemma}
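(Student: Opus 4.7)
The plan is to treat items \eqref{item:lemma:inftyExp-i} and \eqref{item:lemma:inftyExp-ii} by a weighted Cauchy--Schwarz argument applied to the Newton--Leibniz formula, and to handle \eqref{item:lemma:inftyExp-ia} and \eqref{item:lemma:inftyExp-iia} as essentially elementary consequences of the integrability of the derivatives in a weight that grows at infinity. Throughout, the key scalar estimate (derivable by splitting $\Y \in [\Y_0,C_0]$ and $\Y \geq C_0$ and using integration by parts for the tail) is
\begin{equation}
\label{eq:weight-tail}
\int_\Y^\infty t^{-\delta} e^{-\gamma t}\, \diff t
\;\lesssim\; \Y^{-\delta}\, e^{-\gamma \Y},
\qquad \Y \geq \Y_0,
\end{equation}
with implied constant depending only on $\delta$, $\gamma$, $\Y_0$.

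For \eqref{item:lemma:inftyExp-i}, the assumption $u(y) \to 0$ together with $u \in C^1((\Y_0,\infty);X)$ gives the identity $u(\Y) = -\int_\Y^\infty u^\prime(y)\,\diff y$ in $X$, once we check that $u^\prime$ is Bochner integrable on $(\Y,\infty)$; this follows from Cauchy--Schwarz applied with the weight $\omega_{\delta,\gamma}$, using that $\omega_{\delta,\gamma}^{-1}$ is integrable by \eqref{eq:weight-tail}. Taking $X$-norms, inserting and factoring the weight, and applying Cauchy--Schwarz one more time produces
\[
\|u(\Y)\|_X \;\leq\; \|u^\prime\|_{L^2(\omega_{\delta,\gamma},(\Y,\infty);X)}\, \Bigl(\int_\Y^\infty y^{-\delta} e^{-\gamma y}\,\diff y\Bigr)^{1/2},
\]
and \eqref{eq:weight-tail} converts the last factor into $\Y^{-\delta/2} e^{-\gamma\Y/2}$, yielding \eqref{eq:lemma:inftyExp-i}.

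Item \eqref{item:lemma:inftyExp-ia} follows by observing that the above Cauchy--Schwarz argument, applied now on $(\Y_0,\infty)$, shows $u^\prime \in L^1((\Y_0,\infty);X)$; hence $u(y)$ has a limit $L \in X$ as $y \to \infty$. If $L \neq 0$ one would have $\|u(y)\|_X \geq \|L\|_X/2$ for $y$ large, and since $\omega_{\delta,\gamma}(y) = y^\delta e^{\gamma y}$ is not integrable at $+\infty$ for $\gamma >0$, this contradicts $u \in L^2(\omega_{\delta,\gamma},(\Y_0,\infty);X)$.

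For \eqref{item:lemma:inftyExp-ii}, I would apply \eqref{item:lemma:inftyExp-i} first to $u^\prime$ (whose hypotheses hold by assumption), obtaining the pointwise bound
\[
\|u^\prime(y)\|_X \;\lesssim\; y^{-\delta/2} e^{-\gamma y/2}\, \|u^{\prime\prime}\|_{L^2(\omega_{\delta,\gamma},(y,\infty);X)}, \qquad y \geq \Y_0.
\]
Monotonicity of $y \mapsto \|u^{\prime\prime}\|_{L^2(\omega_{\delta,\gamma},(y,\infty);X)}$ in $y$, combined with the representation $u(\Y) = -\int_\Y^\infty u^\prime(y)\,\diff y$ (valid since $u(\infty)=0$ and the integral converges by the pointwise bound just obtained), gives
\[
\|u(\Y)\|_X \;\lesssim\; \|u^{\prime\prime}\|_{L^2(\omega_{\delta,\gamma},(\Y,\infty);X)} \int_\Y^\infty y^{-\delta/2} e^{-\gamma y/2}\,\diff y,
\]
and another application of the tail bound \eqref{eq:weight-tail} (with $\delta/2$ and $\gamma/2$) closes the estimate; the resulting factor $\Y^{-\delta} e^{-\gamma\Y}$ is absorbed into $C_{\delta,\gamma,\Y_0}\, \Y^{-\delta/2} e^{-\gamma\Y/2}$ for $\Y \geq \Y_0$. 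Finally, \eqref{item:lemma:inftyExp-iia} is proved by applying \eqref{item:lemma:inftyExp-ia} twice: first to $u^\prime$ using the hypotheses on $u^\prime, u^{\prime\prime}$ to conclude $u^\prime(y) \to 0$, then to $u$ using the hypotheses on $u, u^\prime$ to conclude $u(y) \to 0$. The only mild subtlety anywhere in the argument is the justification of the Newton--Leibniz identity in the Bochner setting; this is routine, as the integrability of $u^\prime$ on tails of $(\Y_0,\infty)$ was established explicitly by the Cauchy--Schwarz step.
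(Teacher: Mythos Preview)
Your proof is correct and, for item \eqref{item:lemma:inftyExp-i}, essentially identical to the paper's: both use the Newton--Leibniz representation $u(\Y)=-\int_\Y^\infty u'(y)\,\diff y$, Cauchy--Schwarz with the weight, and the tail bound \eqref{eq:weight-tail} (the paper also justifies \eqref{eq:weight-tail} by the same integration-by-parts argument you allude to).

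For item \eqref{item:lemma:inftyExp-ia} your route differs from the paper's. The paper starts from $u(y)=u(\eta)+\int_\eta^y u'(t)\,\diff t$, squares, multiplies by $e^{\gamma'\eta}$ for some $0<\gamma'<\gamma$, and integrates in $\eta$. Your argument is more direct: you deduce $u'\in L^1((\Y_0,\infty);X)$ from Cauchy--Schwarz and \eqref{eq:weight-tail}, infer the existence of a limit $L$, and rule out $L\neq 0$ by the divergence of $\int^\infty \omega_{\delta,\gamma}(y)\,\diff y$ for $\gamma>0$ against the assumed finiteness of $\|u\|_{L^2(\omega_{\delta,\gamma})}$. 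This is a cleaner contradiction argument and avoids the auxiliary parameter $\gamma'$. The paper does not spell out items \eqref{item:lemma:inftyExp-ii} and \eqref{item:lemma:inftyExp-iia}; your bootstrap (apply \eqref{item:lemma:inftyExp-i} to $u'$, then integrate using monotonicity of the tail norm) is a natural and valid way to obtain them. One small remark: after applying \eqref{eq:weight-tail} with parameters $\delta/2,\gamma/2$ you already get the factor $\Y^{-\delta/2}e^{-\gamma\Y/2}$ directly, so the absorption step you mention is unnecessary.
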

\begin{proof}
 We will only prove items (\ref{item:lemma:inftyExp-i}) and (\ref{item:lemma:inftyExp-ia}) as 
the remaining two are proved by similar arguments. 

We begin the proof with the following observation: 
There is a constant that depends only on 
$\delta$, $\Y_0$, and $\gamma$ such that
\begin{equation}
\label{eq:asymptotics}
\int_{\Y}^\infty y^{-\delta} \exp(-\gamma y)\,\diff y 
               \lesssim  \Y^{-\delta} \exp(-\gamma \Y). 
\end{equation}
For $\delta \ge 0$, this is immediate. 
For $\delta <  0$, one integrates by parts once
to discover that the leading order asymptotics (as $\Y \rightarrow \infty$) 
of the integral is $\gamma^{-1} \exp(-\gamma \Y) \Y^{-\delta}$. 

We now proceed with the proof of \eqref{eq:lemma:inftyExp-i}:
Since $\gamma > 0$,  we can write  
\begin{align*}
\|- u(\Y)\|_{X}  = \left\|\int_{\Y}^\infty u^\prime(y)\,\diff y \right\|_{X}  
\leq 
\sqrt{ \int_{\Y}^\infty y^{-\delta} \exp(-\gamma y)\,\diff y} 
\|u^\prime\|_{L^2(\omega_{\delta,\gamma},(\Y,\infty))}, 
\end{align*}
and \eqref{eq:lemma:inftyExp-i} follows from \eqref{eq:asymptotics}. The assertion 
of item (\ref{item:lemma:inftyExp-ia}) follows by a similar argument, 
starting from $u(y) =  u(\eta) + \int_{\eta}^y u^\prime(t)\,\diff t$, squaring, multiplying
by $\exp(\gamma' \eta)$ for arbitrary $0 < \gamma' < \gamma$, and integrating in $\eta$. 
\end{proof}

To prove \eqref{eq:B1beta-estimate-20} we have to estimate $u(\Y)$. 
Lemma~\ref{lemma:inftyExp} shows 
$$
\|u(\Y)\|_X \lesssim \Y^{-\alpha/2-(1-\beta)} \exp(-\Y \gamma/2) C_u. 
$$
With this estimate in hand, we can show \eqref{eq:B1beta-estimate-20}, 
recalling that $|I_M | \sim \Y$. 
%
\section{Analysis of the decoupling eigenvalue problem}
\label{S:AnEVPy}
\begin{lemma}[weighted Poincar\'e]
\label{lemma:weighted-Linfty}
Let $\Y>0$ and  $\alpha \in (-1,1)$. 
Then, for $v \in C^1((0,\Y])$ with $v(\Y) = 0$ there holds
\begin{equation}
\|v\|_{L^\infty(0,\Y)} \leq \Y^{(1-\alpha)/2} (1-\alpha)^{-1/2} \|v^\prime\|_{L^2(y^\alpha,(0,\Y))}. 
\end{equation}
\end{lemma}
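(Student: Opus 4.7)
The plan is to combine the fundamental theorem of calculus with a weighted Cauchy--Schwarz inequality, exploiting the boundary condition $v(\Y)=0$. Since $v \in C^1((0,\Y])$ and $v(\Y)=0$, for every $z \in (0,\Y]$ I can write
\[
v(z) \;=\; -\int_z^{\Y} v'(y)\, \diff y.
\]
The key trick is to split the integrand by inserting the weight: $v'(y) = y^{-\alpha/2}\cdot y^{\alpha/2} v'(y)$. This moves the weight $y^\alpha$ into the $L^2$ factor where we want it, and leaves a power $y^{-\alpha}$ in the other factor.

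Applying the Cauchy--Schwarz inequality to the integral representation above then yields
\[
|v(z)|^2 \;\le\; \Bigl(\int_z^{\Y} y^{-\alpha}\, \diff y\Bigr) \Bigl(\int_z^{\Y} y^{\alpha} |v'(y)|^2\, \diff y\Bigr) \;\le\; \Bigl(\int_0^{\Y} y^{-\alpha}\, \diff y\Bigr) \|v'\|_{L^2(y^{\alpha},(0,\Y))}^2.
\]
Here the hypothesis $\alpha \in (-1,1)$ plays a dual role: $\alpha < 1$ guarantees that $y^{-\alpha}$ is integrable on $(0,\Y)$ (so the bound is finite), while $\alpha > -1$ ensures that the norm on the right-hand side is a natural quantity. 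Evaluating the first integral gives $\int_0^{\Y} y^{-\alpha}\, \diff y = \Y^{1-\alpha}/(1-\alpha)$.

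Taking the supremum over $z \in (0,\Y]$ and the square root of the resulting estimate produces the claimed inequality with the stated constant $\Y^{(1-\alpha)/2}(1-\alpha)^{-1/2}$. There is no real obstacle here: the argument is a one-line weighted Cauchy--Schwarz bound, and the compatibility of the exponent range $\alpha \in (-1,1)$ with the integrability of $y^{-\alpha}$ near the origin is precisely what makes the estimate sharp up to the explicit constant.
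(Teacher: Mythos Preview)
Your proof is correct and follows essentially the same argument as the paper: write $v(z)=-\int_z^{\Y} v'(y)\,\diff y$, insert the weight via $v'(y)=y^{-\alpha/2}\,y^{\alpha/2}v'(y)$, apply Cauchy--Schwarz, and evaluate $\int_0^{\Y} y^{-\alpha}\,\diff y=\Y^{1-\alpha}/(1-\alpha)$.
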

\begin{proof}
From $v(\Y) = 0$ we get $v(y) = - \int_{y}^\Y v^\prime(t)\,\diff t$. 
Hence, for $y \in (0,\Y)$,
\begin{align*}
|v(y)|  & = \left| \int_{y}^\Y v^\prime(t)\,\diff t\right| 
            = \left| \int_{y}^\Y t^{-\alpha/2} t^{\alpha/2} v^\prime(t)\,\diff t\right| 
\leq 
            \left( \int_{y}^\Y t^{-\alpha}\,\diff t\right)^{1/2} \|v^\prime\|_{L^2(y^\alpha,(0,\Y))}  \\
            & \leq \Y^{(1-\alpha)/2} (1-\alpha)^{-1/2} \|v^\prime\|_{L^2(y^\alpha,(0,\Y))} , 
\end{align*}
which finishes the proof. 
\end{proof}

\begin{lemma}[eigenvalue upper bound]
\label{lemma:upper-bound-lambda}
Let $\Y>0$ and $\alpha \in (-1,1)$. 
Assume that $(v,\mu)$ satisfy
\begin{equation} 
\|v^\prime\|^2_{L^2(y^\alpha,(0,\Y))} = 1, 
\qquad \|v\|^2_{L^2(y^\alpha,(0,\Y))} = \mu,
\qquad v(\Y) = 0.  
\end{equation}
Then,
$0 < \mu \leq \Y^2 (1 - \alpha^2)^{-1}$. 
\end{lemma}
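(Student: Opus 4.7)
The plan is to prove the lemma in two parts: the strict positivity $\mu>0$ and the upper bound $\mu \le \Y^2/(1-\alpha^2)$. The first part is essentially immediate: since the weight $y^\alpha$ is strictly positive on $(0,\Y)$ and $v \not\equiv 0$ (as $\|v'\|_{L^2(y^\alpha,(0,\Y))}^2 = 1$), the defining relation $\|v\|_{L^2(y^\alpha,(0,\Y))}^2 = \mu$ forces $\mu > 0$.

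For the upper bound, I would proceed in the same spirit as Lemma~\ref{lemma:weighted-Linfty}. Using the boundary condition $v(\Y) = 0$, I would write $v(y) = -\int_y^\Y v'(t)\,\diff t$ and then split the integrand as $t^{-\alpha/2} \cdot t^{\alpha/2} v'(t)$ in order to apply the Cauchy--Schwarz inequality with the Muckenhoupt weight:
\begin{equation*}
  |v(y)|^2 \le \left(\int_y^\Y t^{-\alpha}\,\diff t\right)\left(\int_y^\Y t^\alpha |v'(t)|^2\,\diff t\right)
  \le \frac{\Y^{1-\alpha}}{1-\alpha}\,\|v'\|_{L^2(y^\alpha,(0,\Y))}^2 = \frac{\Y^{1-\alpha}}{1-\alpha}.
\end{equation*}
Note that the hypothesis $\alpha \in (-1,1)$ is exactly what is needed for the integral $\int_0^\Y t^{-\alpha}\,\diff t$ to converge.

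To obtain the claimed bound on $\mu$, I would multiply the pointwise estimate by $y^\alpha$ and integrate over $(0,\Y)$:
\begin{equation*}
\mu = \int_0^\Y y^\alpha |v(y)|^2\,\diff y \le \frac{\Y^{1-\alpha}}{1-\alpha}\int_0^\Y y^\alpha\,\diff y
= \frac{\Y^{1-\alpha}}{1-\alpha}\cdot\frac{\Y^{1+\alpha}}{1+\alpha} = \frac{\Y^2}{1-\alpha^2},
\end{equation*}
where convergence of $\int_0^\Y y^\alpha\,\diff y$ again uses $\alpha > -1$.

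There is no real obstacle here; the whole argument is a two-line weighted Poincaré estimate using only the fundamental theorem of calculus, Cauchy--Schwarz, and the Muckenhoupt admissibility condition $\alpha \in (-1,1)$. The only subtlety worth flagging is that the bound should be seen as the quantitative counterpart of Lemma~\ref{lemma:weighted-Linfty}: the latter controls $\|v\|_{L^\infty}$, whereas here the same technique, combined with an extra integration in $y$, gives control of $\|v\|_{L^2(y^\alpha,(0,\Y))}$ and thereby an explicit, sharp constant in terms of $\Y$ and $\alpha$.
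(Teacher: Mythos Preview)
Your proof is correct and is essentially the same as the paper's: the paper invokes Lemma~\ref{lemma:weighted-Linfty} to bound $\|v\|_{L^\infty(0,\Y)}$ and then integrates $y^\alpha$ against this constant, whereas you inline that $L^\infty$ estimate (via the same Cauchy--Schwarz splitting $t^{-\alpha/2}\cdot t^{\alpha/2}v'(t)$) before integrating. The two arguments are identical in substance and yield the same constant.
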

\begin{proof}
We compute, using Lemma~\ref{lemma:weighted-Linfty}
\begin{align*}
\mu 
& = \|v\|^2_{L^2(y^\alpha,(0,\Y))} = \int_{0}^\Y t^\alpha |v(t)|^2\,\diff t
\leq \|v\|^2_{L^\infty(0,\Y)} \Y^{1+\alpha} (1+\alpha)^{-1}   \\
&\leq
\Y^{1+\alpha} \Y^{1-\alpha} (1+\alpha)^{-1} (1-\alpha)^{-1} \|v^\prime\|^2_{L^2(y^\alpha,(0,\Y))}
 = \Y^2 (1-\alpha^2)^{-1}, 
\end{align*}
which finishes the proof. 
\end{proof}

We also need lower bounds for eigenvalues.

\begin{lemma}[eigenvalue lower bound]
\label{lemma:lower-bound-lambda}
Let $\alpha > -1$.
Let $\calG^M$ be an arbitrary mesh on $(0,\Y)$ 
with the property that for all elements
$I_i$, $i=2,\ldots,M$, not abutting $y=0$ there holds $|I_i| \leq C_{geo} \dist(I_i,0)$.
Let $V_h \subset H^1(y^\alpha,(0,\Y))$ be
a subspace of the space of piecewise polynomials of degree $q$ on $\calG^M$. 
Then, with $h_{min}$ denoting the smallest element size,
\begin{equation}
\|v^\prime\|_{L^2(y^\alpha,(0,\Y))} \lesssim h_{min}^{-1} q^2 \|v\|_{L^2(y^\alpha,(0,\Y))}, 
\qquad \forall v \in V_h, 
\end{equation}
where the hidden constant depends solely on $C_{geo}$ and $\alpha$.
\end{lemma}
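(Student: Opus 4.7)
The statement is a weighted inverse (Markov-type) estimate for piecewise polynomials of degree $q$ on the mesh $\calG^M$, in the weighted norm $L^2(y^\alpha,(0,\Y))$. My plan is to localize the inequality to each element $I_i$ and then separately handle: (a) the elements $I_i$, $i\geq 2$, which stay away from the origin and on which the weight is slowly varying, and (b) the first element $I_1=(0,y_1)$, where the weight is genuinely singular at the left endpoint and scaling arguments must be combined with a weighted Markov-Bernstein inequality on the reference interval.

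\textbf{Elements away from the origin.} For $i \ge 2$, the mesh hypothesis $|I_i|\le C_{geo}\dist(I_i,0)$ implies, with constants depending only on $\alpha$ and $C_{geo}$, that
\begin{equation*}
\max_{y\in I_i} y^\alpha \; \sim \; \min_{y\in I_i} y^\alpha\,.
\end{equation*}
Hence for $v|_{I_i}\in \mathbb{P}_q$ the standard (unweighted) Markov inverse estimate on an interval gives
\begin{equation*}
\| v' \|_{L^2(y^\alpha,I_i)}^2 \;\lesssim\; \bigl(\max_{I_i} y^\alpha\bigr) \, q^4 |I_i|^{-2} \| v\|_{L^2(I_i)}^2 \;\lesssim\; q^4 |I_i|^{-2} \| v\|_{L^2(y^\alpha,I_i)}^2 \;\lesssim\; q^4 h_{min}^{-2}\| v\|_{L^2(y^\alpha,I_i)}^2\,.
\end{equation*}

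\textbf{The first element.} On $I_1=(0,y_1)$ the weight is not slowly varying, so I rescale to the reference interval $(0,1)$ via $y=y_1\xi$ and set $\hat v(\xi)=v(y_1\xi)\in \mathbb{P}_q$. A direct computation yields
\begin{equation*}
\| v'\|_{L^2(y^\alpha,I_1)}^2 = y_1^{\alpha-1}\|\hat v'\|_{L^2(\xi^\alpha,(0,1))}^2,\qquad
\| v\|_{L^2(y^\alpha,I_1)}^2 = y_1^{\alpha+1}\|\hat v\|_{L^2(\xi^\alpha,(0,1))}^2,
\end{equation*}
so the $y_1$-dependence contributes the expected $y_1^{-2}\leq h_{min}^{-2}$ factor. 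What remains is the \emph{weighted Markov-Bernstein inequality}
\begin{equation*}
\|\hat v'\|_{L^2(\xi^\alpha,(0,1))} \;\leq\; C(\alpha)\, q^2\, \|\hat v\|_{L^2(\xi^\alpha,(0,1))} \qquad \forall\,\hat v\in\mathbb{P}_q,\quad \alpha>-1.
\end{equation*}
This is the step I expect to be the main (and only nontrivial) obstacle. Since $\alpha>-1$, the weight $\xi^\alpha$ is integrable and belongs to $A_2$, so both sides are finite. The estimate follows from the classical theory of Jacobi polynomials: expanding $\hat v$ in the orthonormal basis of Jacobi polynomials on $(0,1)$ associated with the weight $\xi^\alpha$, the operator $\partial_\xi$ acts as a shift with coefficients that grow at most like $q^2$, yielding the sharp Bernstein-Markov factor $q^2$. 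Equivalently, one may argue via the spectral gap of the associated weighted Sturm-Liouville operator on polynomials of degree $q$; the constant depends only on $\alpha$.

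\textbf{Conclusion.} Summing the elementwise bounds $\| v'\|_{L^2(y^\alpha,I_i)}^2 \lesssim q^4 h_{min}^{-2}\| v\|_{L^2(y^\alpha,I_i)}^2$ over $i=1,\ldots,M$ and taking square roots yields the global inequality with the constant depending only on $\alpha$ and $C_{geo}$, as claimed.
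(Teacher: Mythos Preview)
Your proposal is correct and follows essentially the same approach as the paper: localize to each element, use that the weight is slowly varying on elements $I_i$ with $i\ge 2$ (so the unweighted Markov inequality applies there), and on the first element $I_1$ scale to a reference interval and invoke a weighted polynomial inverse estimate of the form $\|\hat v'\|_{L^2(\xi^\alpha)} \lesssim q^2 \|\hat v\|_{L^2(\xi^\alpha)}$. The paper likewise states this weighted Bernstein--Markov inequality (on $(-1,1)$ with weight $(1+y)^\alpha$) as a known polynomial inverse estimate without further proof.
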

\begin{proof}
We emphasize that the condition $h_i \leq C_{geo} \operatorname*{dist}(I_i,0)$ is satisfied
for all meshes where neighboring elements have comparable size.
We also remark that (slightly) sharper estimates (in the dependence on the polynomial degree $q$) 
are possible on geometric meshes with linear degree vector. 
We write $h_i = |I_i|$. 
We note the polynomial inverse estimate 
\begin{equation}
\label{eq:poly-inverse}
\int_{-1}^1 (1+y)^\alpha w^\prime(y)^2\, \diff x \lesssim q^4 \int_{-1}^1 (1+y)^\alpha w^2(y)\,\diff y
\quad \forall w \in {\mathbb P}_q(\widehat K).  
\end{equation}
For the first element $I_1 = (0,y_1)$ we calculate for $v \in V_h$ 
and its pull-back $\widehat v:= v|_{I_1} \circ F_{I_1}$
\begin{align} 
\nonumber 
\|v^\prime\|^2_{L^2(y^\alpha,\widehat K)} 
& = (h_1/2)^{\alpha+1-2} \int_{-1}^1 (1+y)^\alpha |\widehat v^\prime(y)|^2\,\diff y  \\
\label{eq:inverse-estimate-1}
& \lesssim h_1^{\alpha+1-2} q^4 \int_{-1}^1 (1+y)^\alpha |\widehat v(y)|^2\,\diff y 
 \sim h_1^{-2} q^4 \|v\|^2_{L^2(y^\alpha,I_1)},
\end{align}
where, in the last step, we used the inverse estimate \eqref{eq:poly-inverse}.
For the remaining elements $I_i$, we exploit that the assumption $h_i \ge C_{geo} \dist(I_i,0)$
to obtain that the weight is slowly varying over them, \ie
$$
\max_{y \in I_i} y^\alpha \leq (1+C_{geo})^{|\alpha|} \min_{y \in I_i} y^\alpha, 
\qquad i=2,\ldots,M.
$$
Hence, the polynomial inverse estimate \eqref{eq:poly-inverse} (with $\alpha= 0$ there) 
yields by scaling arguments
\begin{equation}
\label{eq:inverse-estimate-2}
 \|v^\prime\|_{L^2(y^\alpha,I_i)} \leq C h_i^{-1} q^2 \|v\|_{L^2(y^\alpha,I_i)}.
\end{equation}
Combining \eqref{eq:inverse-estimate-1}, \eqref{eq:inverse-estimate-2} yields the result.
\end{proof}

\bibliographystyle{plain}
\bibliography{biblio}
\end{document}